\documentclass[12pt]{article}

\usepackage[utf8]{inputenc}
\usepackage{verbatim}

\usepackage{amsfonts}
\usepackage[x11names]{xcolor}
\usepackage{caption}
\usepackage{mathrsfs}
\usepackage[T1]{fontenc}
\usepackage{tikz}
\usepackage{floatrow}
\usepackage{mathtools}

\usepackage{stackrel}
\usepackage{multicol}

\usepackage{amsmath,amsthm,amssymb,setspace,enumitem,epsfig,titlesec,verbatim,color,array,eurosym,multirow,blkarray,lscape,bm,nicefrac,comment,soul,graphicx,MnSymbol,wasysym,subfigure,multirow}
\usepackage[top=2.5cm,left=2.8cm,right=2.8cm,bottom=2.5cm]{geometry}
\usepackage{float}

\usepackage{cancel}
\usepackage{ulem}
\usetikzlibrary{decorations.pathreplacing,angles,quotes}

\newtheorem{theorem}{Theorem}
\numberwithin{theorem}{section}

\newtheorem{lemma}[theorem]{Lemma}
\newtheorem{proposition}[theorem]{Proposition}
\newtheorem{example}[theorem]{Example}
\newtheorem{definition}[theorem]{Definition}

\theoremstyle{definition}
\newtheorem{remark}[theorem]{Remark}
\definecolor{G1}{rgb}{0.0, 0.5, 0.0}
\definecolor{FG}{rgb}{0.0, 0.5, 0.0}

\newcommand{\Z}{\mathbb{Z}}
\newcommand{\T}{\mathbb{T}}
\newcommand{\R}{\mathbb{R}}

\allowdisplaybreaks
\usepackage{authblk}

\title{Phase Plane Analysis  on Time Scales for a Lotka--Volterra Competition Model}
\date{ }

\author[1,2]{Sabrina H. Streipert}
\author[3]{Gail S. K. Wolkowicz}
\affil[1]{Department of Mathematics, University of Pittsburgh, PA}
\affil[2]{NSF-Simons National Institute for Theory and Mathematics in Biology, Chicago, IL}
\affil[3]{Department of Mathematics and Statistics, McMaster University, Canada}

\begin{document}

\maketitle

\begin{abstract}
    In this work, we formulate a two-species Lotka--Volterra (LV) competition model on an arbitrary time scales, that contains as special cases the classical LV competition model on a continuous domain, as well as its discrete analogue, the Leslie--Gower model. To derive its global dynamics, we introduce a dynamic dynamic phase plane  analysis that extends the traditional phase plane analysis  that is known to be a powerful tool in the analysis of planar differential equations. In the case of  the discrete time scale $\T=\Z$, the method is consistent with the augmented phase portrait introduced for  discrete planar maps. However, for non-constant graininess, this dynamic phase plane is time-dependent. Despite the dynamic character, we argue that, in some cases,  time-independent 
    information  can  be identified that results in the  determination of the global dynamics of  a planar dynamic system.  
    This dynamic phase plane method, albeit only demonstrated in the context of the particular two-species competition model, can be extended to other planar systems on time scales, providing a novel technique to study the global dynamics of planar systems on time scales that unify discrete and continuous modeling. 
\end{abstract}

\section{Introduction}

The theory of time scales, developed in \cite{Hilger1988}, has gained attention due to its unifying character of continuous and discrete calculus and its resulting potential for real-life applications. By generalizing   differentiation and integration to a general nonempty closed subset of the real numbers, called a time scale and denoted by $\T$, equations involving the (delta) derivative of an unknown function that describe a process evolving on the time domain $\T$ can be formulated. For example, $\mathbf{y}^\Delta = A \mathbf{y}+\mathbf{b}$ is a system of linear dynamic equations, where $\mathbf{y}^\Delta$ is the time scales analogue of a derivative on the time domain $\T$, called (delta) derivative, $A\in \R^{n\times n}, \mathbf{b}\in \R^n$, and $\mathbf{y}\colon \T \to \R^n$. %For $t_0\in \T$, the solution of this dynamic equation is given by $y(t)=e_a(t,t_0)y(t_0)+\int_{t_0}^t e_a(t,\sigma(s))b(s)\, \Delta s$, where $e_a(t,t_0)$ is the time scales analogue of the exponential function and the integral is over the measurable set $\T$. 
In the special case, when $\T=\R$, the delta derivative of $f\colon \T\to \R$ is identical to the classical derivative  and in the special case of $\T=\Z$, the delta derivative is defined by the forward Euler operator so that $f^\Delta(t)=\Delta f(t)=f(t+1)-f(t)$. The study of such {\it dynamic equations} on time scales unifies and extends the theories of differential and difference equations.

Since time scales allows for any nonempty closed subset of the real numbers, the time domain over which time-dependent processes take place could be, for example, the union of  isolated points (e.g., $\T=\bigcup_i \{t_i\}$ for $t_i<t_{i+1}$ and $t_i\in \R$)  or the union of continuous time intervals and isolated time points (e.g., $\T=\bigcup_{i=0}^\infty [a_i,b_i] \cup \{c_i\}$ for $a_i<b_i<c_i<a_{i+1}$ and $a_i,b_i, c_i\in \R$ for all $i\geq 0$), making it a suitable framework to study processes that change at non-equidistant time points. This is particularly useful if we describe, for example, different seasonal behaviors of species, where species reproduce continuously during one season but hibernate during another season \cite{Bohner1, StTS}. The theory of time scales is a modern field of applied analysis with established fundamentals (see, e.g., textbooks \cite{Bohner1, Bohner2}) and many nontrivial results including conditions for the existence and uniqueness of solutions to dynamic equations and Sturm-Liouville theory on time scales. Extensions have included perturbation theory \cite{Olszewski}, Liapunov stability analysis \cite{KloedenZmorzynska2006, martynyuk2016, SIEGMUND2002255}, and  Laplace transforms  \cite{ Bohner1, Bohner2, DAVIS20071291}.

Despite this progress on time scales, many analytical techniques that are well established in the theory of differential equations remain open problems in the theory of time scales. One such powerful technique is the {\it phase plane analysis} that is a standard method used to assess the dynamics of planar differential equations. For example, the solutions to the popular two species Lotka--Volterra competition model can be fully determined by this phase plane analysis \cite{Edelstein1988}. Already considered ineffective in the study of planar discrete maps, this method has certainly been neglected in the study of time scales. Motivated by a recent advancement  in \cite{StWo}, where the authors introduced an {\it augmented phase plane analysis} to provide insights into the global dynamics of discrete planar maps, we derive such technique  for the general case of dynamic equations on time scales. More precisely, we formulate a {\it dynamic (augmented) phase plane analysis on time scales} to gain insights into planar dynamic equations. As an example, we introduce a time scales analogue of the two species Lotka--Volterra competition model and use our method to derive its global dynamics.

The Lotka--Volterra competition model is one of the classical mathematical models describing inter-specific competition, pioneered independently by Alfred J.~Lotka  \cite{Lotka1925} and Vito Volterra \cite{Volterra1926a,Volterra1926b}. 
These models became foundational for theoretical ecology and helped 
formalize ideas such as competitive exclusion and coexistence 
\cite{Kingsland2015}. 
The classical two-species competition model is given by
\begin{equation}\label{LVR}
    x'=rx\left(1-\frac{x}{K}\right)-\widetilde{\alpha} xy, \qquad \qquad y'=sy\left(1-\frac{y}{L}\right)-\widetilde{\beta} xy,\qquad \qquad t\in \mathbb{R},
\end{equation}
where $x(t), y(t)$ represent the size of the two competing species at time $t\in\mathbb{R}$,  $r, s>0$ represent the intrinsic growth rates for species $x$ and $y$, respectively,  $K, L>0$ are the corresponding carrying capacities, and $\widetilde{\alpha}, \widetilde{\beta}>0$ give the strength of the competition between these two species.

A fundamental property of this model is that  generically,  except for the generic case of a line of equilibria, it admits four possible 
equilibria: extinction of both species, survival of only one species  $x$ or $y$,
and the coexistence of both species. 
The classical phase plane analysis can be used to describe the global dynamics and show that the outcome depends on the relative
strength of intra- and inter-specific competition \cite{Edelstein1988}. 
If inter-specific competition is sufficiently strong, one species
excludes the other, leading to the classical {\it competitive exclusion}
principle. 
In contrast, if intra-specific competition dominates, the model predicts
stable coexistence of the two species. 
These conclusions can be determined geometrically from the relative
positions of the zero-growth isoclines in the $(x,y)$ phase plane.

The following two species recurrence is often considered as  its discrete analogue:
\begin{equation}\label{LVZ}
    x_{t+1} = \frac{(1+r) x_t}{1+\frac{r}{K}x_t+\widetilde{\alpha} y_t}, \qquad \qquad     y_{t+1} = \frac{(1+s) y_t}{1+\frac{y}{L}y_t+\widetilde{\beta} x_t},\qquad \qquad t\in \mathbb{Z}, 
\end{equation}
where the interpretation of the parameters is the same as in \eqref{LVR}, but instead, $x_t, y_t$ represent the size of the two competing species at the discrete time points $t\in \mathbb{Z}$. This discrete planar system is also referred to as ``Leslie--Gower'' competition model and has been analyzed, for example in \cite{Cushing2004},  using the theory of monotone flows. In \cite{StWo}, the authors  introduce  the augmented phase plane analysis and use it  to obtain the global dynamics of \eqref{LVZ}, providing an alternative proof. They show that \eqref{LVZ} has the same nullclines, equilibria, and dynamics as known for the continuous model \eqref{LVR}. Furthermore,   just as for the analogous ordinary differential equations model  where, in the absence of a competitor,   \eqref{LVR} reduces to  logistic growth of the species so that solutions for positive initial conditions  converge monotonically to the carrying capacity, the solutions of \eqref{LVZ} also converge to their respective carrying capacities in the absence of the competitor. These similarities  in the dynamics  of \eqref{LVR} and \eqref{LVZ} justify the claim that \eqref{LVZ} is the discrete analogue of the continuous 2-species Lotka--Volterra competition model.

 Motivated by the analysis of \eqref{LVZ} in \cite {StWo},  we introduce the dynamic phase plane on time scales and  formulate  a time scales analogue of the 2-species  Lotka--Volterra competition model  that we use  as an example to demonstrate  the effectiveness of  analysis exploiting the dynamic phase plane on time scales.  This extends the work in \eqref{LVZ} to an arbitrary time scale,  where the  local dynamics of a system are already much harder to analyze.  

Throughout, we assume that 
 $\T$ is an arbitrary  time scale that is unbounded from above, i.e.,  there exists an infinite sequence $\{t_n\}$ such that  $t_n \in \T$ and $\lim_{n \to \infty}t_n=\infty$. In case the reader  is not    familiar with the basic time scales theory, we provide a brief summary of the preliminaries in the Appendix~\ref{sec:appendix}. For more background on time scales, the interested reader is referred to the introductory books \cite{Bohner1, Bohner2}.

\section{A Lotka--Volterra Competition Model on Time Scales}

In order to formulate a time scales analogue of the 2-species Lotka--Volterra competition model, we recall that a key feature is that, in the absence of a competitor, a species grows logistically. Here, we consider a single species ``logistic growth'' model   of the form
\begin{equation}\label{eq:logsingle}
z^\Delta = -(\ominus r)z^\sigma \left(1-\frac{z}{K}\right),
\end{equation}
for $r, K>0$ with $\ominus z=\frac{-z}{1+\mu(t) z}$. 
Note  that  \eqref{eq:logsingle} is equivalent to $z^\Delta = rz \left(1-\frac{z^\sigma}{K}\right)$ proposed in \cite{Marcia}, as well as the dynamic Beverton--Holt model  $z^\Delta = \alpha z^\sigma \left(1-\frac{z}{K}\right)$ introduced in \cite{BohnerWarth}, for $\alpha=\frac{r}{1+\mu r}=-(\ominus r)$. 
Since $z^\sigma=z+\mu(t) z^\Delta$,   see Remark~\ref{rem:fsigma}, \eqref{eq:logsingle} can be  expressed equivalently as 
$$z^\Delta = \ominus \left( p(t)+f(t)z\right)z,$$
for time dependent functions\footnote{Since $\ominus r=\frac{-r}{1+\mu(t)r}$ depends on $t$ unless $\mu(t)$ is constant for all $t\in \T$, $p,f\colon \T\to \R$.} $p=\ominus r$ and $f=-\frac{\ominus r}{K}$. 
Thus, \eqref{eq:logsingle} as well as its equivalent expressions in \cite{BohnerWarth} and \cite{Marcia} are special cases of the proposed logistic growth models in \cite{AkinBohner}, see also \cite[p.~18]{Bohner2} (see Appendix~\ref{Ap:Marcia}  for details).

If $\T=\R$, then \eqref{eq:logsingle} is $z'=rz\left(1-\frac{z}{K}\right)$, consistent with the classical logistic growth model. If $\T=\Z$, then \eqref{eq:logsingle} is equivalent to the Beverton--Holt model, since $ z_{t+1}-z_t=\Delta z_t=-(\ominus r)z_{t+1}\left(1-\frac{z_t}{K}\right)=\frac{r}{1+r}z_{t+1}\left(1-\frac{z_t}{K}\right)$, can be rewritten, after collecting the terms for $z_{t+1}$, as $z_{t+1}=\frac{K \rho z_t}{K+(\rho-1)z_t}$ with $\rho=r+1$. 

\begin{remark}
Note that if, instead of \eqref{eq:logsingle},  we  had considered 
 $$z^\Delta =r z^\sigma \left(1-\frac{z}{K} \right),$$
it becomes, after rearranging,
$$z^{\sigma}= \frac{z}{1-r\mu(t)  (1- \frac{z}{K})}.$$
This expression shows that  solutions could become negative for sufficiently large $\mu(t)$, resulting in biologically unrealistic values. We would then have to restrict the parameter range as in \cite{BohnerWarth} or the choice of time scales to ensure that the solutions remain nonnegative.

Instead, rearranging \eqref{eq:logsingle},
we have
$$z^{\sigma}= \frac{(1+r \mu(t) ) z}{1+ r \mu(t)  ( \frac{z}{K})}, $$
resulting in  a model that has nonnegative solutions for nonnegative initial conditions for arbitrary positive constants $r, K$ and arbitrary time scales. In fact, \eqref{eq:logsingle} can be solved explicitly by considering the transformation $u=\frac{1}{z}$, which transforms the nonlinear dynamic equation into a linear nonhomogeneous dynamic equation (see Appendix \ref{SolSingleT}).  
\end{remark}
%%%%%%%%%%%%%%%%

Accepting \eqref{eq:logsingle} as a time scales analogue of the Verhulst logistic growth model $z'=rz\left(1-\frac{z}{K}\right)$, we  propose the following dynamic system  as a  two-species competition model on time scales:
\begin{equation}\label{eq:CompT}
\begin{split}
    x^\Delta &=F(x^\sigma, y^\sigma,x, y)= -(\ominus r) x^\sigma \left(1-\frac{x}{K}-\alpha y\right), \qquad \qquad t\in \mathbb{T},\\
    y^\Delta &=G(x^\sigma, y^\sigma, x,y)=-(\ominus s) y^\sigma \left(1-\frac{y}{L}-\beta x\right), \qquad \qquad t\in \mathbb{T}
    \end{split}
\end{equation}
with nonnegative initial conditions $x(t_0), y(t_0)\geq 0$ for some $t_0\in \mathbb{T}$ and model parameters $r, s,  \alpha, \beta, K, L>0$.

If $\mathbb{T}=\mathbb{R}$, then \eqref{eq:CompT} reduces  to \eqref{LVR} for $\widetilde{\alpha}=r\alpha$ and $\widetilde{\beta}=r\beta$, because $x^\sigma = x$, $y^\sigma = y$ and $-(\ominus r)=r, -(\ominus s)=s$. 
Similarly, if $\mathbb{T}=\mathbb{Z}$,   $-(\ominus r)=r/(1+r), -(\ominus s)=s/(1+s)$ and then \eqref{eq:CompT} reduces to \eqref{LVZ}  with\footnote{Alternatively, choosing $\alpha=\frac{\widetilde{\alpha}}{r}$ and $\beta=\frac{\widetilde{\beta}}{r}$ in \eqref{eq:CompT} ensures that for $\T=\R$, \eqref{eq:CompT}  reduces to \eqref{LVR} and for $\T=\Z$, \eqref{eq:CompT} reduces to \eqref{LVZ}.} $\widetilde{\alpha}=r\alpha$ and $\widetilde{\beta}=s\beta$. 
We will  provide further  justification that  \eqref{eq:CompT} represents a time scales analogue of the classical Lotka--Volterra competition model by showing that the dynamics of \eqref{eq:CompT}  are the same as the well-known dynamics for the classical models \eqref{LVR} and \eqref{LVZ}.

%%%
We  first note that \eqref{eq:CompT} can be written equivalently  as 
\begin{align}\label{eq:xy_Delta}
\begin{split}
  x^{\Delta}&=\frac{rx(1-\frac{x}{K}-\alpha y)}{1 +r \mu(t) (\frac{x}{K}+\alpha y)},\\  
  y^{\Delta}&=\frac{sy(1-\frac{y}{L}-\beta x)}{1 +s \mu(t) (\frac{y}{L}+\beta x)}
  \end{split}
\end{align}
 and implies the relation
\begin{align}\label{eq:xy_sigma}
\begin{split}
  x^{\sigma}&=x+\mu x^\Delta= \frac{x(1+r\mu(t) )}{1+r\mu(t) (\frac{x}{K}+\alpha y)},    \\  
  y^{\sigma}&=y+\mu y^\Delta = \frac{y(1+s\mu(t) )}{1+s\mu(t) (\frac{y}{L}+\beta x)},
  \end{split}
\end{align}
where we used that for a differentiable function $f$, $f^\sigma = f+\mu f^\Delta$ (see Appendix Remark~\ref{rem:fsigma}).

To obtain the 
$x$-and $y$-nullclines of \eqref{eq:CompT}, we find curves in the nonnegative $x,y$-plane such that   $x^\Delta = 0$ and $y^\Delta=0$, respectively. For \eqref{eq:CompT}, we obtain  the following functions for the nontrivial nullclines of the $x$-equation
\begin{equation}\label{eq:NCx}
        x=h_{0}(y):=0 \qquad \mbox{and} \quad y=h(x):=\frac{1}{\alpha}\left(1-\frac{x}{ K}\right)
\end{equation}
and, for the $y$-equation,
     \begin{equation}\label{eq:NCy}
        y=k_0(x):= 0 \qquad \mbox{and} \quad y=k(x):=L(1-\beta x).
\end{equation}
The equilibria, i.e., constant solutions, of \eqref{eq:CompT} are: 
\begin{equation}\label{eq:equilbria}
E_0^*=(0,0), \quad E_K^*=(K,0), \quad E_L^*=(0,L), \quad E^*=(x^*,y^*)=\left(\frac{K(\alpha L-1)}{\alpha \beta K L -1}, \frac{L(\beta K-1)}{\alpha \beta K L -1}\right),
\end{equation}
where the coexistence equilibrium $E^*$  is  (biologically) relevant/feasible (i.e., $x^*>0$ and $y^*>0$) if and only if $(\alpha L-1)(\beta K-1)>0$. All four equilibria are consistent with the equilibria obtained for the classical 2-species Lotka--Volterra model \eqref{LVR} and its discrete counterpart \eqref{LVZ}

\section{Dynamic Phase Plane}

We now introduce a method, using what  we refer to as  the {\it dynamic phase plane}, that extends the classical phase plane analysis, as well as the recently developed {\it augmented phase plane} analysis  for discrete planar models \cite{StWo}, to models on time scales. We will then illustrate this dynamic phase plane on the example of the two  species competition model  formulated in Section 2.

\subsection{Root-sets and Root-operators}

For each nullcline of a planar dynamic system on time scales, we define its corresponding {\it Root-set}. 

\begin{definition}\label{def:rootset}[Root-set]
    Consider \eqref{eq:CompT}. 
    
    Let $y=\ell(x)$, for $\ell\colon \R\to \R$, be a nullcline. The time-dependent \underline{Root-set} associated with the nullcline  $y=\ell(x)$ is defined as
    $$\mathcal{R}_{\ell}(t):=\{(x,y)\, \colon\, y+\mu(t) G(x^\sigma, y^\sigma, x,y) = \ell(x+\mu(t) F(x^\sigma, y^\sigma, x,y))\}.$$
   
    If  instead, $x=\widehat{\ell}(y)$ is a nullcline, where  $\widehat{\ell}:\R \to \R$,   then we define the \underline{Root-set} associated with the nullcline $x=\widehat{\ell}(y)$ as
    $$\mathcal{R}_{\widehat{\ell}}(t):=\{(x,y)\, \colon\, x+\mu(t) F(x^\sigma, y^\sigma, x,y) = \widehat{\ell}(y+\mu(t) G(x^\sigma, y^\sigma, x,y))\}.$$
\end{definition}

If the Root-set associated with a nullcline $y=\ell(x)$ can be expressed geometrically as a curve, we also refer to the Root-set as  the ``Root-curve''.
Based on Definition~\ref{def:rootset}, the Root-set associated with a nullcline is the preimage of the nullcline at the time point $\sigma(t)$ for any right-scattered points $t$. In other words,  since $y^\sigma=y+\mu(t)G(x^\sigma, y^\sigma, x, y)$ and $x^\sigma=x+\mu(t)F(x^\sigma, y^\sigma, x, y)$, the Root-set for the nullcline $y=\ell(x)$ are all points such that $y^\sigma=\ell(x^\sigma)$. Thus, at time point $\sigma(t)$, the $y$-value is on the associated nullcline. A key feature and difference from the work in \cite{StWo} is that the Root-set  is  time-dependent  whenever the graininess $\mu(t)$ is not constant.

If $\T=\R$, then, by Definition~\ref{def:rootset}, $\mathcal{R}_\ell(t)$ simply consists of  the points on the nullcline $y=\ell(x)$. Instead, if $\T=\Z$, then Definition~\ref{def:rootset} defines the Root-set of the nullcline $y=\ell(x)$ as 
$$\mathcal{R}_{\ell}(t)=\{(x,y)\, \colon\, y_{t+1} = \ell(x_{t+1})\}.$$
This is consistent with the Definition of Root-sets in \cite{StWo}. In general, if $\T$ is an arbitrary time scale and $t\in \T$ is right-dense, then the Root-set $\mathcal{R}_\ell(t)$, at time $t$,  consists of the nullclines (since $\mu(t)=0$) but if $t\in \T$ is right-scattered, then $\mathcal{R}_\ell(t)$, at time $t$, takes the form 
\begin{equation}\label{eq:Rellrightscat}
\mathcal{R}_{\ell}(t)=\{(x,y)\, \colon\, y^\sigma = \ell(x^\sigma)=\ell(x+\mu(t) x^\Delta)\},
\end{equation}
highlighting the time-dependent nature of the Root-sets.

\begin{definition}[Root-operator]\label{def:Rootop}
    Consider \eqref{eq:CompT} and let $y=\ell(x)$ for $\ell\colon \R\to \R$ be a nullcline. We define the {\it Root-operator associated with $y=\ell(x)$} as
    $$\mathcal{L}_{\ell}(t, x^\sigma, y^\sigma, x,y):= y+\mu(t) G(x^\sigma, y^\sigma, x,y) -\ell(x+\mu(t) F(x^\sigma, y^\sigma, x,y))
    =y^\sigma-\ell(x^\sigma)
    .$$
    If, instead, $x=\widehat{\ell}(y)$ for $\widehat{\ell}\colon \R\to \R$ is a nullcline, then we define the {\it Root-operator associated with $x=\widehat{\ell}(y)$} as
    $$\mathcal{L}_{\widehat{\ell}}(t, x^\sigma, y^\sigma, x,y):= x+\mu(t) F(x^\sigma, y^\sigma, x,y) -\widehat{\ell}(y+\mu(t) G(x^\sigma, y^\sigma, x,y))
    =x^\sigma-\widehat{\ell}(y^\sigma)
    .$$
\end{definition}

In particular, it is the sign of the Root-operator associated with a nullcline that is informative, as the following theorem highlights. 

\begin{theorem}\label{thm:need}
Consider \eqref{eq:CompT}. 
Let $\ell \in C(\R,\R)$ such that $y=\ell(x)$ is a nullcline. Let  $(x(t),y(t))$ be a solution of \eqref{eq:CompT}. If $\mathcal{L}_{\ell}(T,x(T),y(T))>0$, then there exists $\epsilon>0$ such that 
$y(t)>\ell(x(t))$ for $t\in [T,T+\epsilon)$. If $T$ is right-scattered, then $\epsilon>\sigma(T)$ so that $y(t)>\ell(x(t))$ for $t\in [T,\sigma(T)]$. If $\mathcal{L}_{\ell}(T,x(T),y(T))<0$, then there exists $\delta>0$ such that 
$y(t)<\ell(x(t))$ for $t\in [T,T+\delta)$. If $T$ is right-scattered, then $\delta>\sigma(T)$ so that $y(t)<\ell(x(t))$ for $t\in [T,\sigma(T)]$.
\end{theorem}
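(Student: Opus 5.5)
The plan rests on the identity already recorded in Definition~\ref{def:Rootop}: along a solution, $\mathcal{L}_{\ell}(T,x(T),y(T)) = y^\sigma(T)-\ell(x^\sigma(T)) = y(\sigma(T))-\ell(x(\sigma(T)))$. This holds because $x^\sigma=x+\mu x^\Delta$ and $y^\sigma=y+\mu y^\Delta$ (Remark~\ref{rem:fsigma}), and because $x^\Delta=F$ and $y^\Delta=G$ along solutions of \eqref{eq:CompT}. So the sign of the Root-operator at $T$ is the sign of the auxiliary function $\phi(t):=y(t)-\ell(x(t))$ evaluated at $\sigma(T)$. I would split the proof according to whether $T$ is right-dense or right-scattered. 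The case $\mathcal{L}_\ell<0$ is handled by the same argument with the inequalities reversed, or by replacing $\phi$ with $-\phi$.

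\emph{Right-dense $T$.} Here $\mu(T)=0$, so $\sigma(T)=T$ and $\mathcal{L}_\ell(T,x(T),y(T))=\phi(T)>0$. Solutions of \eqref{eq:CompT} are delta differentiable, hence continuous on $\T$. Since $\ell\in C(\R,\R)$, the function $\phi$ is continuous on $\T$. Continuity at $T$ gives $\epsilon>0$ such that $\phi(t)>0$ for all $t\in[T,T+\epsilon)\cap\T$, which is the claim.

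\emph{Right-scattered $T$.} Here $\mathcal{L}_\ell(T,x(T),y(T))>0$ says exactly that $\phi(\sigma(T))>0$, that is, $(x(T),y(T))\in\{\mathcal{L}_\ell>0\}$ forces the state at the next time point to lie strictly above the nullcline. In $\T$, the interval $[T,\sigma(T)]$ is just $\{T,\sigma(T)\}$ because $(T,\sigma(T))\cap\T=\emptyset$. I read the statement's ``$\epsilon>\sigma(T)$'' as $T+\epsilon>\sigma(T)$, which any $\epsilon>\mu(T)$ satisfies, so the interval $[T,T+\epsilon)$ reaches $\sigma(T)$. If $\sigma(T)$ is itself right-dense, the right-dense argument applied at $\sigma(T)$ can extend the inequality a little beyond $\sigma(T)$.

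\textbf{Main obstacle.} The delicate point is the left endpoint $t=T$ in the right-scattered case. The sign of $\mathcal{L}_\ell$ controls $\phi(\sigma(T))$, not $\phi(T)$. I would first try to show, using the explicit form \eqref{eq:xy_sigma} together with the affine nullclines $h$ and $k$ of \eqref{eq:NCx}--\eqref{eq:NCy}, that $\phi(T)$ and $\phi(\sigma(T))$ have the same sign for the nullclines of \eqref{eq:CompT}. If that fails, the statement should be read as ``$\phi>0$ at the next time point $\sigma(T)$,'' consistent with the Root-set interpretation as a preimage of the nullcline. In that reading the inclusion of $T$ in $[T,\sigma(T)]$ is meant in the time scales sense where only $\sigma(T)$ carries new information. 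The proof of the right-scattered case then reduces to the identity above.
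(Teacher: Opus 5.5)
Your argument follows the paper's proof. You split on whether $T$ is right-dense or right-scattered. In the scattered case you use $\mathcal{L}_\ell(T,x(T),y(T))=y(\sigma(T))-\ell(x(\sigma(T)))$, and in the dense case you use continuity. Your dense case is tidier than the paper's. The paper follows a single sequence $t_n\downarrow T$ and only records $U_{\widehat{\epsilon}}(x(T),y(T))\cap\Omega\neq\emptyset$. What is actually needed is what you use: $\phi=y-\ell\circ x$ is continuous on $\T$, so $\phi>0$ persists on $[T,T+\epsilon)_\T$.

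The obstacle you flag at the left endpoint is genuine, and the first route you propose for it cannot work. For right-scattered $T$, the sign of $\mathcal{L}_\ell$ does not determine the sign of $\phi(T)$, even for the affine nullclines of \eqref{eq:CompT}. The paper's own example shows this:
\begin{itemize}
\item Take $\T=2^{\mathbb{N}_0}$ with $\alpha=2$, $\beta=0.3$, $r=0.5$, $s=0.3$, $K=L=1$ and $(x(1),y(1))=(2,1)$.
\item Formula \eqref{eq:xy_sigma} gives $(x(2),y(2))=(1,\tfrac{65}{74})$. This point lies above $y=k(x)$, since $k(1)=\tfrac{7}{10}$.
\item Next, $(x(4),y(4))=(\tfrac{74}{139},\tfrac{1300}{1579})$. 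This point lies below $y=k(x)$, since $k(\tfrac{74}{139})=\tfrac{584}{695}>\tfrac{1300}{1579}$.
\end{itemize}
So $\mathcal{L}_k(2,x(2),y(2))<0$ while $y(2)>k(x(2))$. The claim that $y(t)<\ell(x(t))$ on $[T,\sigma(T)]$ therefore fails at $t=T$. This one-step crossing is exactly what the Root-curves are meant to detect.

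The paper's proof does not resolve this point either. It simply asserts $y(s)>\ell(x(s))$ for $s\in\{T,\sigma(T)\}$.

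So drop the sign-preservation attempt and prove the corrected statement. For right-scattered $T$, the strict inequality holds at $\sigma(T)$. It therefore holds on $(T,T+\epsilon)_\T$ for a suitable $\epsilon>\mu(T)$:
\begin{itemize}
\item if $\sigma(T)$ is right-scattered, take $T+\epsilon\le\sigma(\sigma(T))$;
\item if $\sigma(T)$ is right-dense, use continuity of $\phi$ at $\sigma(T)$.
\end{itemize}
This is the form the paper's invariance arguments actually use, because there the state is already on the correct side of the nullcline at time $T$.
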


\begin{proof}
    If $T$ is right-scattered, by Def.~\ref{def:Rootop},  $\mathcal{L}_{\ell}(T,x(T),y(T))>0$ implies that $y^\sigma(T)>\ell(x^\sigma(T))$. Thus, for right-scattered points $T$, $\mathcal{L}_{\ell}(T,x(T),y(T))>0$ implies 
    $y(s)>\ell(x(s))$ for $s\in \{T,\sigma(T)\}$. The same argument can be used if $\mathcal{L}_{\ell}(T,x(T),y(T))<0$, confirming the statement for right-scattered points. 
    
    If instead $T$ is right-dense, then $\mathcal{L}_{\ell}(T,x(T),y(T))>0$ implies that $y(T)>\ell(x(T))$ i.e., $(x(T),y(T))\in \Omega:=\{(x,y)\, \colon \, y>\ell(x)\}$. By the density of real numbers, there exists $\widehat{\epsilon}>0$ such that $U_{\widehat{\epsilon}}(x(T),y(T))\cap \Omega\neq \emptyset$, where $U_{\widehat{\epsilon}}(x(T),y(T))=\{(u,v)\colon (u-x(T))^2+(v-y(T))^2<\widehat{\epsilon}^2\}$. We now argue that there exists a corresponding $\epsilon>0$ such that $(x(t),y(t))\in U_{\widehat{\epsilon}}(x(T),y(T))\cap \Omega\subset \Omega$ for $t\in [T,T+\epsilon)$.
   Since $T$ is right-dense,  there exists $\{t_n\}_n\subset \T$ such that $T<\ldots<t_{n+1}<t_{n}$ with $\lim_{n\to \infty}t_n=T$.  Furthermore, since $(x(t),y(t))$ is a solution of \eqref{eq:CompT}, the components of the solution are delta-differentiable and  therefore, by \cite[Theorem 1.16i)]{Bohner1} continuous and  by \cite[Theorem 1.60]{Bohner1} rd-continuous. That is, $x,y\colon \T\to \R$ are continuous (from the right) at any right-dense point (here $T$). Thus, there exists $M>0$ such that $(x(t_i),y(t_i))\in U_{\widehat{\epsilon}}(x(T),y(T))\cap \Omega$ for all $i>M$. This implies that there exists  
   $\epsilon>0$ such that $y(t)>\ell(x(t))$ for all $t\in [T, T+\epsilon)$.
\end{proof}

 In particular, for \eqref{eq:CompT},   by Definition~\ref{def:Rootop}, 
for the trivial nullclines $x=h_0(y)\equiv 0$  and  $y=k_0(x)\equiv 0$, we obtain
\begin{align}
    \mathcal{L}_{h_0}(t,x,y)=x+\mu(t)F(x^\sigma, y^\sigma, x, y)\stackrel{\eqref{eq:xy_sigma}}{=}\frac{x(1+r\mu(t))}{1+r\mu(t)\left(\frac{x}{K}+\alpha y\right)},\label{eq:Lh0}\\
    \mathcal{L}_{k_0}(t,x,y)=y+\mu(t)G(x^\sigma, y^\sigma, x, y)\stackrel{\eqref{eq:xy_sigma}}{=}\frac{y(1+s\mu(t))}{1+s\mu(t)\left(\frac{y}{L}+\beta x\right)}.\label{eq:Lk0}
\end{align}
Hence, for $x,y\in (0,\infty)^2$, $\mathcal{L}_{h_0}(t,x,y)>0$ and $\mathcal{L}_{k_0}(t,x,y)>0$. Recall, for finite $t$, $\mu(t)<\infty$. Thus, for finite $t$, $\mathcal{L}_{h_0}(t,x,y)=0$,  if and only if $x=0$ and  $\mathcal{L}_{k_0}(t,x,y)=0$ if and only if $y=0$. 
Thus, the corresponding Root-sets consist of the respective axis.

 For \eqref{eq:CompT} and its nontrivial nullclinesin  \eqref{eq:NCx}-\eqref{eq:NCy}, we have, by Definition~\ref{def:Rootop}, 
\begin{equation}\label{eq:LhLk}
    \begin{split}
\mathcal{L}_h(t,x,y)&=\frac{y(1+s\mu(t) )}{1+s\mu(t) (\frac{y}{L}+\beta x)} - \frac{1}{\alpha} + 
\frac{ 1}{\alpha K}\frac{x(1+r\mu(t) )}{1+r\mu(t) (\frac{x}{K}+\alpha y)},  \\
\mathcal{L}_k(t, x,y)&=\frac{y(1+s\mu(t))}{1+s\mu(t) (\frac{y}{L}+\beta x)} - L+L\beta \frac{x(1+r\mu(t) )}{1+r\mu(t) (\frac{x}{K}+\alpha y)}.
    \end{split}
\end{equation}
For a right-dense point, $\mathcal{L}_h(t, x,y)=y(t)-h(x(t))$ and $\mathcal{L}_k(t, x,y)=y(t)-k(x(t))$.

To discuss the dynamics of \eqref{eq:CompT}, we make use of the signs of the Root-operators associated with the nullclines, $\mathcal{L}_h(t,x,y)$ and $\mathcal{L}_k(t,x,y)$  in regions of  the ${\rm int}\mathbb{R}^2_+ =(0,\infty)\times (0,\infty)$. In order to do this, the following observations and notation will be useful.  Applying Def.~\ref{def:Rootop} to \eqref{eq:CompT},  the Root-operators defined in \eqref{eq:LhLk}  can be expressed as rational functions of the graininess $\mu(t)$. More precisely,  
\begin{equation}\label{eq:Lh}
    \mathcal{L}_h(t,x,y)=\frac{a_2(x,y)\mu^2(t)+a_1(x,y)\mu(t)+a_0(x,y)}{\alpha (K + r\mu(t)  x + \alpha  r\mu(t) K y) (L +
      \beta s \mu(t) L x + s \mu(t)  y)},
 \end{equation}     
where
\begin{align}\label{eq:ai}
\begin{split}
a_0(x,y)&=L (x -K+\alpha y K )=\alpha K L(y-h(x)),\\
a_1(x,y)&= \beta s  L x (x-K) + 
  y(s (x-K) + \alpha L (r(x-K)  + sK ) ) +r\alpha^2  y^2K L  ,\\
a_2(x,y)&= \alpha r s  y(
yK (\alpha L-1) + xL  (1 - \beta K)).
\end{split}
\end{align} 
Since the denominator of $\mathcal{L}_h(t,x,y)$ is always positive, its sign is determined by the sign of its numerator, 
$N_a(t,x,y)= a_2(x,y)\mu^2(t)+a_1(x,y)\mu(t)+a_0(x,y).$

\begin{equation}\label{eq:Dai}
\frac{\partial a_1(x,y)}{\partial x}= -s \beta K L + 2s\beta L x + (r \alpha L +s)y
\end{equation}
and 
\begin{equation}\label{eq:Daiy}
\frac{\partial a_1(x,y)}{\partial y}= -K(r \alpha L+s(1-\alpha L))   +  (r \alpha L +s)x   +2r \alpha^2KL y.
\end{equation}

Note that $a_1(x,y)$ is convex in $x$ for fixed $y$ and convex in $y$ for fixed $x$, since
\begin{equation}\label{eq:partials_ai}
\frac{\partial^2 a_1(x,y)}{\partial x^2} = 2s\beta L>0 \quad \mbox{and} \quad \frac{\partial^2 a_1(x,y) }{\partial y^2} =2r \alpha^2  KL>0.
\end{equation}

Similarly, by \eqref{eq:LhLk}, 
\begin{equation}\label{eq:Lk}
\mathcal{L}_k(t,x,y)=\frac{L(b_2(x,y) \mu^2(t)+b_1(x,y)\mu(t)+b_0(x,y))}{{(K + r \mu(t)  x + \alpha r \mu(t) K y) (L +
      \beta s  \mu(t) L x +s  \mu(t)  y)}},
\end{equation}
where
\begin{align}\label{eq:bi}
\begin{split}
b_0(x,y)&=KL(y-k(x)), \\
b_1(x,y)&= L(s x \beta K(y-k(x)) +  rxL(\beta  K-1) +\alpha r y K(y-L)  +rxy), \\
b_2(x,y)&=rs \beta Lx(Lx(\beta K-1)+Ky(1-\alpha L)). 
\end{split}
\end{align}
 Again, since the denominator of $\mathcal{L}_k(t,x,y)$ is always positive, the sign is determined by  its numerator, 
$N_b(t,x,y)= b_2(x,y)\mu^2(t)+b_1(x,y)\mu(t)+b_0(x,y)$.
\begin{equation}\label{eq:Dbi}
\frac{\partial b_1(x,y)}{\partial x}= 
 L^2(r(\beta K -1)- s \beta K  )+ 2s  \beta^2 KL^2 x+ L(r   +  s \beta K ) y
\end{equation}
and
\begin{equation}\label{eq:Dbiy}
\frac{\partial b_1(x,y)}{\partial y}= -\alpha rK L^2 +L(r+s \beta K )x+2\alpha K L r y.
\end{equation}

Note that $b_1$ is convex in $x$ for fixed $y$ and convex in $y$ for fixed  $x$, since
\begin{equation}\label{eq:partials_bi}
 \frac{\partial^2 b_1}{\partial x^2}=2s\beta^2 K L^2 >0   \quad \mbox{and} \quad \frac{\partial^2 b_1}{\partial y^2}=2\alpha  r K L >0.
 \end{equation}

\subsection{Global Analysis of the Time Scales Lotka--Volterra Model}

We now analyze the dynamics of \eqref{eq:CompT} for nonnegative initial conditions and time scales that are unbounded from above. To obtain the global dynamics of the solutions, we use the newly defined Root-sets and Root-operators for nontrivial nullclines.

The following notation will  be used in the proofs that follow.

For any set $\Lambda$, let  $\overline{\Lambda}$ denote the closure of 
 the set $\Lambda$, ${\rm int} \Lambda$, the interior of the set $\Lambda$, and  $\partial \Lambda$, the boundary of the set $\Lambda$.
 Furthermore, we define the time scales intervals $[a,b]_{\T}=[a,b]\cap \T=\{t\in \T\, \colon\, a\leq t\leq b\}$. The semi-open intervals $(a,b]_{\T}$ and $[a,b)_{\T}$ as well as the open interval $(a,b)_{\T}$ are defined accordingly.

The following results are basic but crucial observations for solutions of \eqref{eq:CompT}. The first proposition ensures that solutions of \eqref{eq:CompT} are biologically relevant, supporting the claim that \eqref{eq:CompT} is biologically meaningful to represent a two-species competition model.  Instead of classical techniques, we provide a proof using the Root-operators associated with  the trivial nullclines $y=h_0(x)$ and $y=k_0(x)$.

\begin{proposition}\label{prop:nonneg}
    Consider \eqref{eq:CompT}. 
    \begin{itemize}
        \item[a)] The interior of the first quadrant is positively invariant, i.e., if $x(T)>0$ and $y(T)>0$,  then $x(t)>0$ and  $y(t)>0$,  for all $t\in [T,\infty)_{\T}$.   
    \item[b)] The axes in the first quadrant are positively invariant, i.e.,
    if $x(T)=0$,  and $y(T)\geq 0$, then $x(t)=0$ and $y(t)\geq 0$ for all $t\geq T, t\in \mathbb{T}$  and   if $y(T)=0$,  and $x(T)\geq 0$, then $y(t)=0$ and $x(t)\geq 0$ for all $t\in [T,\infty)_{\T}$.
    \end{itemize}
\end{proposition}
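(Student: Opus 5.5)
The plan is to work with the Root-operators $\mathcal{L}_{h_0}$ and $\mathcal{L}_{k_0}$ of the trivial nullclines, given explicitly in \eqref{eq:Lh0}--\eqref{eq:Lk0}. By \eqref{eq:xy_sigma} these equal $x^\sigma$ and $y^\sigma$. The argument has two ingredients: a one-step statement at each $t$, and a propagation argument along $\T$ via an induction principle on time scales (the standard induction principle of \cite{Bohner1}). That principle requires that (i) the property hold at $T$; (ii) if it holds at a right-scattered $t$, it holds at $\sigma(t)$; (iii) if it holds at a right-dense $t$, it holds on some $[t,t+\epsilon)_\T$; (iv) if it holds on $[T,t)_\T$ for a left-dense $t$, it holds at $t$.

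For a), the property is $P(t)$: $x(t)>0$ and $y(t)>0$. If $t$ is right-scattered and $P(t)$ holds, then \eqref{eq:Lh0} gives $\mathcal{L}_{h_0}(t,x(t),y(t))>0$, since the numerator $x(1+r\mu)$ and the denominator $1+r\mu(x/K+\alpha y)$ are both positive. Hence $x(\sigma(t))=x^\sigma(t)>0$, and similarly $y(\sigma(t))>0$. This is exactly the right-scattered case of Theorem~\ref{thm:need} applied to the nullclines $x=0$ and $y=0$.

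If $t$ is right-dense, continuity of delta-differentiable functions gives an $\epsilon$-neighbourhood in $\T$ on which $x,y>0$. This is the right-dense case of Theorem~\ref{thm:need}.

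For (iv), the step I expect to be the main obstacle: continuity alone only gives $x(t)\ge 0$ at a left-dense $t$, so it does not exclude $x(t)=0$. To rule this out, I would use that on $[T,t)_\T$ the first equation in \eqref{eq:xy_Delta} can be written as $x^\Delta=p(\tau)x$. Here
$$p(\tau)=\frac{r(1-x/K-\alpha y)}{1+r\mu(\tau)(x/K+\alpha y)},$$
which is rd-continuous and bounded on the compact interval $[T,t]_\T$ because $x,y$ are continuous there. The function $p$ is regressive, since $1+\mu p=x^\sigma/x>0$ (equivalently, $1+\mu p=(1+r\mu)/(1+r\mu(x/K+\alpha y))>0$ whenever $x,y\ge0$). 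Uniqueness of solutions of linear dynamic equations then gives $x(\tau)=x(T)e_p(\tau,T)$. Positive regressivity makes $e_p>0$, and boundedness of $p$ bounds the continuous part of $\log e_p$ from below, so $x(t)>0$. The same argument applies to $y$. If this exponential representation proves delicate, the alternative is uniqueness backward in time: if $x(t)=0$, then the solution of the $x$-equation with $x(t)=0$ would be $x\equiv0$, contradicting $x(T)>0$. The exponential route is cleaner, so I would try it first.

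For b), suppose $x(T)=0$ and $y(T)\ge0$. Then $\tilde x\equiv0$, paired with $\tilde y$ solving the single-species equation \eqref{eq:logsingle} with parameters $s,L$, is a solution of \eqref{eq:CompT}. By uniqueness of solutions to \eqref{eq:CompT} (the right-hand sides in \eqref{eq:xy_Delta} are locally Lipschitz, and the regressivity-type condition holds for nonnegative states), $x(t)=0$ for all $t\ge T$. Equivalently, $\mathcal{L}_{h_0}=x^\sigma=0$ at right-scattered points, and the ODE part keeps $x\equiv0$. Nonnegativity of $y$ follows from the same induction as in a) applied to $y\ge0$: by \eqref{eq:Lk0}, $y^\sigma\ge0$ whenever $y\ge0$, and at left-dense points closedness of $[0,\infty)$ suffices. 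The case $y(T)=0$ is symmetric.
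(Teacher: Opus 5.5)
Your proposal follows the paper's route. For a), the paper notes via \eqref{eq:Lh0}--\eqref{eq:Lk0} that $\mathcal{L}_{h_0}=x^\sigma>0$ and $\mathcal{L}_{k_0}=y^\sigma>0$ and invokes Theorem~\ref{thm:need}. For b), it combines local uniqueness (\cite[Theorem~8.16]{Bohner1}) with the explicit solution $(0,\tilde y)$ from Proposition~\ref{prop:1D_model}; this is also the simplest way to get $y\geq 0$, since your induction would need the case $y=0$ at a right-dense point handled separately.

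Your version of a) is more careful than the paper's. Theorem~\ref{thm:need} is purely local, and the paper never explains why positivity persists at a left-dense limit point. Your induction principle, together with the representation $x=x(T)e_p(\cdot,T)$ with $p\in\mathcal{R}^+$ on $[T,t]_\T$, settles that step correctly.
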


\begin{proof}
    \noindent {\it a)} \, By \eqref{eq:Lh0}, $\mathcal{L}_{h_0}(t,x,y)>0$ for $x>0$, and by \eqref{eq:Lk0}, $\mathcal{L}_{k_0}(t,x,y)>0$ for $y>0$. Thus, for $x_0, y_0>0$,    $\mathcal{L}_{h_0}(t,x_0,y_0)>0$ and  $\mathcal{L}_{k_0}(t,x_0,y_0)>0$. By Theorem~\ref{thm:need}, $x(t), y(t)>0$ for all $t\in (t_0,\infty)_\T$.  

\noindent {\it b)}\,  Assume that there exists $T$ such that $x(T)=0$ and $y(T)\geq 0$. To show that $x(t)=0$ for all $t\in [T, \infty)_\T$, we proceed by contradiction. First, note that $x(t)\equiv 0$ and by Proposition~\ref{prop:1D_model},  $y(t)=\frac{e_s(t,T)Ly(T)}{L+y(T)(e_s(t,T)-1)}$ is a solution of \eqref{eq:CompT}.   Furthermore, by 
\cite[Theorem~8.16]{Bohner1}, 
there exists $\alpha>0$ such that there exists a unique solution for $[T-\alpha, T+\alpha]_\T$, where, if $T$ is right-scattered, the solution is unique for $[T-\alpha, \sigma(T)]_{\T}$.
Since  $x(t)\equiv 0$ and by Proposition~\ref{prop:1D_model},  $y(t)=\frac{e_s(t,T)Ly(T)}{L+y(T)(e_s(t,T)-1)}$ is a solution, 
these two solutions must be the same on their interval of existence,  $[T,T+\beta]_\T$ for $\beta\geq  \max\{\alpha, \sigma(T)\}$. 
Since this holds for all $T\in \T$, it implies that any solution for which $x(T)=0$, 
$x(t)=0$ for all $t\in [T, \infty)_\T$ and 
$y(t)=\frac{e_s(t,T)Ly(T)}{L+y(T)(e_s(t,T)-1)}$, 
for all $t\in [T, \infty)_\T$.  
\end{proof}

The nonnegativity of the solutions formulated in Proposition~\ref{prop:nonneg} allows us to prove the biologically relevant property that  solutions are bounded, as stated in the following theorem with a proof in Appendix~\ref{A:Proofbnd}.

\begin{theorem}\label{thm:bound}
    Consider \eqref{eq:CompT}.  Solutions with nonnegative initial conditions are bounded.
\end{theorem}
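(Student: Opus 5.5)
Using Proposition~\ref{prop:nonneg}, solutions starting in the closed first quadrant stay there, so $x(t),y(t)\ge 0$ for all $t\in[t_0,\infty)_\T$. I will then compare each component with the single-species logistic equation \eqref{eq:logsingle}. The target is the explicit bound
$$x(t)\le \max\{x(t_0),K\},\qquad y(t)\le \max\{y(t_0),L\}\qquad\text{for all } t\in[t_0,\infty)_\T.$$
I argue only for $x$; the $y$-component is symmetric. If $x(t_0)=0$, Proposition~\ref{prop:nonneg}(b) gives $x\equiv 0$ and there is nothing to prove, so assume $x(t_0)>0$.

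The key step is to show that the set $\{x\le M\}$ with $M=\max\{x(t_0),K\}$ is forward invariant, splitting according to whether a time point is right-scattered or right-dense.

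\emph{Right-scattered $t$.} From \eqref{eq:xy_sigma},
$$x^\sigma=\frac{x(1+r\mu)}{1+r\mu(\frac{x}{K}+\alpha y)}\le \frac{x(1+r\mu)}{1+r\mu\frac{x}{K}}=:\phi_\mu(x),$$
using $y\ge 0$. The map $\phi_\mu$ is increasing in $x$ and satisfies $\phi_\mu(K)=K$. Moreover $\phi_\mu(x)\le x$ whenever $x\ge K$, since $1+r\mu\le 1+r\mu x/K$. Hence $x(t)\le M$ implies $x^\sigma(t)\le \max\{\phi_\mu(x(t)),K\}\le M$. To see this, if $x(t)\le K$ then $\phi_\mu(x(t))\le\phi_\mu(K)=K$, and if $x(t)>K$ then $\phi_\mu(x(t))\le x(t)\le M$.

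\emph{Right-dense $t$.} Here I use \eqref{eq:xy_Delta}: whenever $x>K$, we have $1-x/K-\alpha y<0$, so $x^\Delta<0$. To pass from this pointwise statement to the bound, I suppose $x(t)>M$ for some $t$. Let $\tau=\sup\{s\in[t_0,t]_\T : x(s)\le M\}$. By continuity of $x$ (it is delta-differentiable), $x(\tau)\le M$, and $x>M\ge K$ on $(\tau,t]_\T$. If $\tau$ is right-scattered, this contradicts the scattered case, since $x(\sigma(\tau))\le M$ and $\sigma(\tau)\le t$. If $\tau$ is right-dense, then $x(\tau)=M$ by continuity, and $x^\Delta<0$ on $[\tau,t]_\T$. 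The time-scale mean value theorem, or the monotonicity result that $f^\Delta\le 0$ implies $f$ is nonincreasing (\cite[Cor.~1.16/Thm.~1.76]{Bohner1}), then gives $x(t)\le x(\tau)=M$, a contradiction.

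Alternatively, I could shorten the argument with a comparison principle. Since $x\ge 0$ and $y\ge0$, $x$ is a subsolution of \eqref{eq:logsingle}, and the explicit solution from Proposition~\ref{prop:1D_model}, $z(t)=\frac{e_r(t,t_0)Kz_0}{K+z_0(e_r(t,t_0)-1)}$, is bounded by $\max\{z_0,K\}$. However, a comparison theorem on time scales is not stated in the paper, so I prefer the direct invariance argument above. The main obstacle is the supremum/monotonicity step at right-dense points; the rest is the routine algebra with $\phi_\mu$, which works uniformly in $\mu(t)$ and therefore needs no restriction on the time scale. The same reasoning with $L$, $s$, $\beta$ gives $y(t)\le\max\{y(t_0),L\}$.
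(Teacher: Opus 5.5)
Your proof is correct, but it takes a different route from the paper. The paper's proof (Appendix~\ref{A:Proofbnd}) is the comparison argument you set aside, made rigorous without any nonlinear comparison theorem:
\begin{itemize}
\item From $y\ge 0$ it gets $x^\Delta\le -(\ominus r)x^\sigma(1-\frac{x}{K})$.
\item The substitution $u=1/x$, with $u^\Delta=-x^\Delta/(xx^\sigma)$, turns this logistic inequality into the \emph{linear} inequality $u^\Delta\ge(\ominus r)u-(\ominus r)\frac{1}{K}$.
\item The time-scale Gronwall inequality (Theorem~\ref{Thm:Gronwall}, using $\ominus r\in\mathcal{R}^+$) then gives $x(t)\le \frac{Kx(t_0)}{x(t_0)(1-e_{\ominus r}(t,t_0))+Ke_{\ominus r}(t,t_0)}$.
\end{itemize}
This bound is exactly the logistic solution of Proposition~\ref{prop:1D_model} through $x(t_0)$.

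You instead prove forward invariance of $\{x\le\max\{x(t_0),K\}\}$ by a first-exit argument. At a right-scattered exit time, the increasing one-step map $\phi_\mu$ with fixed point $K$ rules out a jump across the level. At a right-dense exit time, continuity, the sign of $x^\Delta$ and time-scale monotonicity rule out a crossing. One small correction there: if $M=K$ and $y(\tau)=0$, you only get $x^\Delta(\tau)=0$, not $x^\Delta(\tau)<0$. This does no harm, because $x^\Delta\le 0$ on $[\tau,t)_\T$ is all the monotonicity theorem needs.

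Your version is more elementary and self-contained. It needs no exponential function, no Gronwall inequality and no Lemma~\ref{Lem2in1}, and it never divides by $x$. Its scattered step is essentially a Root-operator-style sign check on $M-x^\sigma$, in the spirit of the paper.

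The paper's version gives more. Since its denominator is at least $\min\{x(t_0),K\}$, it yields the same uniform bound $\max\{x(t_0),K\}$. In addition, $e_{\ominus r}(t,t_0)=1/e_r(t,t_0)\to 0$, so the bound tends to $K$ and gives $\limsup_{t\to\infty}x(t)\le K$. Your invariance argument does not provide this asymptotic estimate.
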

%%%%%%%%%%%%%%%%%%%%%%%%%%%%%%%%%%%%%%%%

We next provide  the following crucial result that will assist us in discussing the global dynamics. For the proof, we define 
$$\mu_m:=\liminf_{t \to \infty, t \in \T} \mu(t), \qquad \qquad \mu_M:=\limsup_{t \to \infty, t \in \T} \mu(t).$$

\begin{proposition}\label{prop:convergence}
Consider model \eqref{eq:CompT}.   If  there exists $T \in \T$ such that  both components are component-wise monotone for all $t\in [T, \infty)_\T$, then the solution converges to an equilibrium as $t\to \infty$\footnote{We say $f\colon \T\to \R$ converges to $\overline{f}\in \R$ as $t\to \infty$ if, for any sequence $\{t_i\}_{0}^\infty$ with $t_i<t_{i+1}$ for all $i\in \mathbb{N}$ and $\lim_{i\to \infty}t_i=\infty$, $\lim_{i\to \infty}f(t_i)=\overline{f}$.}. 
\end{proposition}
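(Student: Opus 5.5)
The plan is to combine monotonicity with the boundedness from Theorem~\ref{thm:bound}, and then to identify the limit as an equilibrium by passing to the limit in the model equations. By Proposition~\ref{prop:nonneg}, solutions with nonnegative initial conditions stay nonnegative. By Theorem~\ref{thm:bound}, they are bounded. If $x$ and $y$ are each monotone on $[T,\infty)_\T$, they therefore have finite limits $\bar x,\bar y\ge 0$ along every increasing sequence $t_i\to\infty$, and these limits do not depend on the sequence. It remains to show that $(\bar x,\bar y)$ is one of $E_0^*,E_K^*,E_L^*,E^*$. Equivalently, for each component, either the component's limit is $0$ or $(\bar x,\bar y)$ lies on the corresponding nontrivial nullcline: $\bar x=0$ or $\bar x/K+\alpha\bar y=1$, and likewise $\bar y=0$ or $\bar y/L+\beta\bar x=1$. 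I will treat the $x$-component; the $y$-component is symmetric.

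\emph{Case $\mu_M>0$.} Choose $t_i\in\T$ with $t_i\to\infty$ and $\mu(t_i)\to\mu_M\in(0,\infty]$. Then $\mu(t_i)>0$ for large $i$, so each $t_i$ is right-scattered, and $\sigma(t_i)\ge t_i\to\infty$. Hence both $x(t_i)\to\bar x$ and $x^\sigma(t_i)=x(\sigma(t_i))\to\bar x$. Passing to the limit in \eqref{eq:xy_sigma} gives the following.
\begin{itemize}
\item If $\mu_M<\infty$: $\bar x\bigl(1+r\mu_M(\bar x/K+\alpha\bar y)\bigr)=\bar x(1+r\mu_M)$. Hence $\bar x=0$ or $\bar x/K+\alpha \bar y=1$.
\item If $\mu_M=\infty$: divide the numerator and denominator of \eqref{eq:xy_sigma} by $\mu(t_i)$ first. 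This gives $\bar x(\bar x/K+\alpha\bar y)=\bar x$, with the same conclusion.
\end{itemize}
The same sequence yields the corresponding statement for $y$, so $(\bar x,\bar y)$ is an equilibrium.

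\emph{Case $\mu_M=0$, i.e.\ $\mu(t)\to0$.} Here I use an integral argument. By \eqref{eq:xy_Delta}, along any $t\to\infty$ we have $x^\Delta(t)\to c:=r\bar x(1-\bar x/K-\alpha\bar y)$, since the denominator tends to $1$. Suppose $c\neq0$, say $c>0$. Then $x^\Delta(s)\ge c/2$ for all $s\in[T_1,\infty)_\T$, for some $T_1$. Integrating gives
$$x(t)-x(T_1)=\int_{T_1}^t x^\Delta(s)\,\Delta s\ge \tfrac c2 (t-T_1)\to\infty,$$
using that $\int_{T_1}^t 1\,\Delta s=t-T_1$ and that $\T$ is unbounded above. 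This contradicts boundedness. So $c=0$, which again means $\bar x=0$ or $\bar x/K+\alpha\bar y=1$. The $y$-component is handled in the same way.

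The main obstacle is the first case: it needs care because $\mu$ need not converge and may be unbounded. The key step is choosing the sequence realizing $\mu_M$, noting that right-scatteredness is automatic once $\mu(t_i)>0$, and treating $\mu_M=\infty$ by rescaling. A small further point is justifying that $x(\sigma(t_i))\to\bar x$. This follows because $\sigma(t_i)\to\infty$ and the limit is independent of the sequence, by monotonicity. Finally, the limit points $(\bar x,\bar y)$ obtained this way coincide with the list \eqref{eq:equilbria}, up to the nongeneric case of a line of equilibria, which is still a set of equilibria.
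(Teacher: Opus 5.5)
Your proof is correct and follows essentially the same route as the paper. You combine boundedness and eventual monotonicity to get a limit, then split on $\mu_M=\limsup\mu$: for $\mu_M>0$ you pass to the limit in \eqref{eq:xy_sigma} along a sequence realizing $\mu_M$ (rescaling by $\mu$ when $\mu_M=\infty$), and for $\mu\to0$ you use the integral argument of Lemma~\ref{lem:convergence} against boundedness. Writing the limit equations in cross-multiplied form, and noting $x(\sigma(t_i))\to\bar x$ because $\sigma(t_i)\to\infty$, is slightly cleaner than the paper, since it avoids dividing by $\bar x/K+\alpha\bar y$ at the origin and makes the paper's separate cases ($\T$ eventually right-dense, $\mu_m=\infty$) unnecessary.
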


\begin{proof}
By Theorem~\ref{thm:bound}, all solutions are bounded.  Since $\T$ is unbounded from above, and by assumption, solutions  are  eventually monotonic,  each component must converge to a finite value. Assume that $(x(t),y(t))\to (\widetilde{x},\widetilde{y})$  as $t \to \infty$.  To show that the solution converges to an equilibrium, we show that   $(x^{\Delta},y^{\Delta})\to(0,0)$ as $t \to \infty$ and hence $(\widetilde{x},\widetilde{y})$ is an equilibrium.

First note that if $\T$ is eventually right-dense, then the result follows directly from the classical theory of differential equations.

Next, consider the case,   when $\mu_m=\infty$.  Since $(x,y)$ converges to $(\widetilde{x},\widetilde{y})$,  
\begin{align*}
\widetilde{x}=\lim_{t\to\infty}x^\sigma(t)\stackrel{\eqref{eq:xy_sigma}}{=}\lim_{t\to\infty}\frac{x(t)(1+r \mu(t))}{1+r\mu(t)\left(\frac{x(t)}{K}+\alpha y(t)\right)}=\lim_{t\to\infty}\frac{x(t)\left(\frac{1}{\mu(t)}+r\right)}{\frac{1}{\mu(t)}+r\left(\frac{x(t)}{K}+\alpha y(t)\right)}
=\frac{\widetilde{x}}{\frac{\widetilde{x}}{K}+\alpha \widetilde{y}}.
\end{align*}
Similarly, 
$\widetilde{y} =\lim_{t\to \infty}y^\sigma(t)
=\frac{\widetilde{y}}{\frac{\widetilde{y}}{L}+\beta \widetilde{x}}$. 
Solving these two equations for $(\widetilde{x}, \widetilde{y})$ gives the equilibrium values found in \eqref{eq:equilbria}.

We distinguish between two cases:  {\it a)}  $\mu_M>0$ and {\it b)} $\mu_M=0$. In fact, we will show the second part more generally for any $0\leq \mu_m\leq \mu_M<\infty$.

Case a): Assume $\mu_M>0$. 
Since $\mu_M = \limsup_{t\to \infty}\mu(t)$, there exists $\{t_i\}\subset \T$ such that $\lim_{i\to \infty}t_i=\infty$ and $\lim_{i\to \infty}\mu(t_i)=\mu_M$.  Assume first that $\mu_M<\infty$.  Then,  
\begin{align*}
\widetilde{x}=\lim_{t\to\infty}x^\sigma(t)
=\lim_{i\to \infty} x^\sigma(t_i)
\stackrel{\eqref{eq:xy_sigma}}{=}\lim_{i\to\infty}\frac{x(t_i)(1+r \mu(t_i))}{1+r\mu(t_i)\left(\frac{x(t_i)}{K}+\alpha y(t_i)\right)}=\frac{\widetilde{x}(1+r \mu_M)}{1+r\mu_M\left(\frac{\widetilde{x}}{K}+\alpha \widetilde{y}\right)}.
\end{align*}
Therefore, 
\begin{equation}\label{eq:must1}
\widetilde{x}=\frac{\widetilde{x}(1+r \mu_M)}{1+r\mu_M\left(\frac{\widetilde{x}}{K}+\alpha \widetilde{y}\right)}.
\end{equation}
Similarly, %for $\widetilde{y}$, 
\begin{equation}\label{eq:must2}
\widetilde{y}=\frac{\widetilde{y}(1+s \mu_M)}{1+s\mu_M\left(\frac{\widetilde{y}}{L}+\beta \widetilde{x}\right)}.
\end{equation}
Solving \eqref{eq:must1} and \eqref{eq:must2}, assuming that $\mu_M>0$, $(\widetilde{x}, \widetilde{y})$ must be an equilibrium as provided in \eqref{eq:equilbria}.   If, on the other hand, $\mu_M$ is infinite, then there exists $\{t_i\}\subset \T$  such that $\lim_{i\to \infty}t_i=\infty$, and then, 
$$\widetilde{x}=\lim_{i\to \infty}\frac{x(t_i)(1+r \mu(t_i))}{1+r\mu(t_i)\left(\frac{x(t_i)}{K}+\alpha y(t_i)\right)}=
\lim_{i\to \infty}\frac{x(t_i)\left(\frac{1}{\mu(t_i)}+r \right)}{\frac{1}{\mu(t_i)}+r\left(\frac{x(t_i)}{K}+\alpha y(t_i)\right)}=\frac{\widetilde{x}}{\frac{\widetilde{x}}{K}+\alpha \widetilde{y}}$$
and similarly, 
$$\widetilde{y}=\frac{\widetilde{y}}{\frac{\widetilde{y}}{L}+\beta \widetilde{x}},$$
which again has only an equilibrium as solution. 

Case b) Assume now $\mu_M=0$. This implies that $\mu_m=\mu_M=0$ so that $\lim_{t\to \infty}\mu(t)=0$. Then, by \eqref{eq:xy_Delta}, 
\begin{equation*}
\lim_{t\to \infty} x^\Delta (t)
=r\widetilde{x}\left(1-\frac{\widetilde{x}}{K}-\alpha \widetilde{y}\right):=\epsilon_x
\end{equation*}
and similarly, 
\begin{equation*}
\lim_{t\to \infty} y^\Delta (t)
=s\widetilde{y}\left(1-\frac{\widetilde{y}}{L}-\beta \widetilde{x}\right):=\epsilon_y.
\end{equation*}
For the sake of contradiction, suppose that $(\widetilde{x}, \widetilde{y})$ is not an equilibrium. Then, $\epsilon_x, \epsilon_y$ cannot both be zero. If $\epsilon_x\neq 0$, we may assume (without loss of generality) that $\epsilon_x>0$. Then, since $\lim_{t\to \infty}x^\Delta=\epsilon_x>0$, there exists $T$ such that $x^\Delta(t)>\frac{\epsilon_x}{2}$ for all $t\geq T$. Then, by Lemma~\ref{lem:convergence}, $\lim_{t\to \infty}x(t)=\infty$. Note that if $\epsilon_x<0$, then, by a similar argument, $\lim_{t\to \infty}x(t)=-\infty$. In either case, the solution is unbounded, violating Theorem~\ref{thm:bound}. A similar argument applied to the case when $\epsilon_y\neq 0$ completes the contradiction. 
\end{proof}

\subsubsection{Global Dynamics: No interior equilibrium}

Assume that  the nullclines $y=h(x)$ and $y=k(x)$ do not intersect in $\R^+ =(0,\infty)$ so that there is no interior equilibrium. This occurs when 
\begin{enumerate}
    \item[(I)] $\alpha L >1$ and $\beta K<1$ or 
    \item[(II)] $\alpha L<1$ and $\beta K>1$.  
\end{enumerate}
We provide the analysis for (I). In this case, $h(x)<k(x)$ for all $x\in (0,\beta^{-1})$ with $h(x)>0$ for $x\in (0,K)$ and $k(x)>0$ for $x\in (0,\beta^{-1})$. The analysis for (II) is similar. We consider strict inequalities but the same analysis can be completed  if  one of the inequalities is replaced by an equality.

\begin{lemma}\label{lem:signLhLkCase1}
    Consider \eqref{eq:CompT} with $\alpha L >1$ and $\beta K<1$.  Then, the following hold:
    \begin{itemize}
        \item[a)]  $\mathcal{L}_h(t,x,y)>0$ for all  \ $x>0$, $y>\max\{0,h(x)\}$, and $t\in \T$. 
\item[{\it b)}]  $\mathcal{L}_k(t,x,y)<0$ for all  $x>0$, $0<y<k(x)$, and $t\in \T$. 
    \end{itemize}
    
\end{lemma}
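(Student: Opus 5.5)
Rather than working with the quadratic numerators $N_a$ and $N_b$ in $\mu(t)$ from \eqref{eq:Lh} and \eqref{eq:Lk}, I would use the second form of the Root-operators in Definition~\ref{def:Rootop}: $\mathcal{L}_h=y^\sigma-h(x^\sigma)$ and $\mathcal{L}_k=y^\sigma-k(x^\sigma)$, with $x^\sigma,y^\sigma$ given by \eqref{eq:xy_sigma}. Multiplying out shows that
\[
\mathcal{L}_h>0 \iff \frac{x^\sigma}{K}+\alpha y^\sigma>1,
\qquad
\mathcal{L}_k<0 \iff \frac{y^\sigma}{L}+\beta x^\sigma<1 .
\]
So the task reduces to comparing two scalar ``pressures''. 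Write
\[
u=\frac{x}{K}+\alpha y,
\qquad
v=\frac{y}{L}+\beta x,
\]
so that \eqref{eq:xy_sigma} becomes
\[
x^\sigma=x\,\frac{1+r\mu}{1+r\mu u},
\qquad
y^\sigma=y\,\frac{1+s\mu}{1+s\mu v}.
\]
The key structural fact, valid for all $x,y>0$, is
\[
u-v=x\Bigl(\frac1K-\beta\Bigr)+y\Bigl(\alpha-\frac1L\Bigr)>0,
\]
since $\beta K<1$ and $\alpha L>1$. I also need an elementary inequality for the factor $g_c(w):=\frac{1+c}{1+cw}$ with $c\ge 0$. If $w\ge1$, then $g_c(w)\ge 1/w$, because $w(1+c)\ge 1+cw$. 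If $w\le1$, then $g_c(w)\ge 1$. The reverse inequalities hold in the reversed regimes. Together these give, for all $w>0$ and $c\ge0$, that $g_c(w)$ lies between $1$ and $1/w$.

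For a), the hypothesis $y>\max\{0,h(x)\}$ gives exactly $u>1$. For the $x$-term, since $u\ge1$ we have $x^\sigma/K\ge (x/K)/u$. For the $y$-term, either $v\le1$, so $g_{s\mu}(v)\ge1>1/u$, or $v>1$, so $g_{s\mu}(v)\ge 1/v>1/u$ because $v<u$. Since $y>0$, this gives the strict inequality $\alpha y^\sigma>\alpha y/u$. Adding the two terms yields $x^\sigma/K+\alpha y^\sigma>u/u=1$, so $\mathcal{L}_h>0$. This holds for every value $\mu(t)\in[0,\infty)$, hence for all $t\in\T$.

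For b), the hypothesis $0<y<k(x)$ gives $v<1$, and hence $u>v$. Then $g_{s\mu}(v)\le 1/v$, so $y^\sigma/L\le (y/L)/v$. For the $x$-term, either $u\ge1$, so $g_{r\mu}(u)\le1<1/v$, or $u<1$, so $g_{r\mu}(u)\le1/u<1/v$. Since $x>0$, this gives $\beta x^\sigma<\beta x/v$. Summing, $y^\sigma/L+\beta x^\sigma<1$, so $\mathcal{L}_k<0$.

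The only delicate point is securing strictness uniformly in $\mu$, including the right-dense case $\mu=0$ where all ratios equal $1$. Strictness comes from the cross term ($y>0$ in a), $x>0$ in b)) together with $u\neq v$, which is exactly where the case hypotheses $\alpha L>1$ and $\beta K<1$ enter. The case split on whether $v\lessgtr1$ (respectively $u\lessgtr1$) is the step I would check most carefully.
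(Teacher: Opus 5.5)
Your proof is correct, and it takes a genuinely different route from the paper.

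\textbf{The paper's route.} The paper clears denominators and writes each Root-operator as a quadratic $a_2\mu^2+a_1\mu+a_0$ (resp.\ $b_2\mu^2+b_1\mu+b_0$) in the graininess over a positive denominator. It then certifies the sign for every $\mu\ge 0$ by showing that all three coefficients share that sign. The coefficients $a_0,a_2,b_0,b_2$ are immediate. The coefficients $a_1$ and $b_1$ need monotonicity or convexity in each variable, plus evaluations on the axes and along the nullclines.

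\textbf{Your route.} You never expand. You work with $y^\sigma-\ell(x^\sigma)$ directly and use that $g_c(w)$ lies between $1$ and $1/w$; equivalently, $1/g_c(w)=\frac{1}{1+c}+\frac{c}{1+c}\,w$ is a convex combination of $1$ and $w$. This collapses the lemma to the single comparison $u-v=x\bigl(\frac1K-\beta\bigr)+y\bigl(\alpha-\frac1L\bigr)>0$, which is exactly where $\alpha L>1>\beta K$ enters.

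\textbf{What your route buys.} First, it is much shorter. Second, it gives a conceptual reason why the sign does not depend on $t$: the total competitive pressure on $x$ exceeds that on $y$ throughout the open quadrant. Third, the source of strictness is transparent. Fourth, it is more general: you never use that $r\mu$ and $s\mu$ share the same $\mu$, so the conclusion holds for arbitrary independent factors $c_1,c_2\ge 0$.

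It also avoids error-prone algebra. In the paper's part b), $b_1(x,k(x))=r\beta L^2x\bigl[K(1-\alpha L)+(\alpha\beta KL-1)x\bigr]$ has second derivative $2r\beta L^2(\alpha\beta KL-1)$, not $2r\beta L^2(\alpha\beta KL+1)$. So the convexity asserted there can fail in case (I). The conclusion still holds, but only because the bracket is linear and negative at both $x=0$ and $x=1/\beta$.

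\textbf{What the paper's route buys.} It is a uniform, mechanical template: the same coefficients $a_i,b_i$ are reused in the sign lemmas for cases (III) and (IV). Your device carries over there as well. Since $u=v=1$ at $E^*$, the sign of $u-v$ is constant on each side of the ray from the origin through $E^*$. Each region in those lemmas lies on the side your case split requires, with strictness coming from the exclusion of $E^*$ itself.
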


\begin{proof}
{\it a)}  First, recall that the sign of $\mathcal{L}_h(t,x,y)$ is determined by the signs of $a_i(x,y), i=0,1,2$, given in \eqref{eq:ai}.   Let $x>0$. If $y>\max\{0, h(x)\}$, then, $a_0(x,y)>0$.   Furthermore,  $a_2(x,y)>0$, since  $L\alpha>1$ and $\beta K<1$. 
 
It therefore suffices to show that $a_1(x,y)>0$. 
If $x\geq K$, then $a_1(x,y)>0$. If 
$0<x<K$, then since  $y>h(x)=\frac{1}{\alpha}-\frac{x}{\alpha K}$, (see \eqref{eq:NCx}) and  using that \eqref{eq:Daiy} is linear in $y$ with positive coefficient, we have 
\begin{align*}
\frac{\partial a_1(x,y)}{\partial y}\geq \frac{\partial a_1(x,y)}{\partial y}\bigg|_{y=h(x)}&=
-K(r \alpha L+s(1-\alpha L))   +  (r \alpha L +s)x   +2r \alpha^2KL h(x)\\
% &=s (x-K) + \alpha L ((x-K) r + K s)+2r\alpha^2KL h(x) \\
&= sx+Ks(\alpha L-1)+\alpha rL(K-x)>0.
\end{align*}
This implies that $a_1(x,y)$ is increasing in $y$. Thus, for $0<x<K$ and $y>h(x)$,  
\begin{equation}\label{eq:sign_a1}
\begin{split}
a_1(x,y)&>a_1(x,h(x)) =\frac{s (K - x) (
K (\alpha L - 1) + x (1
-\alpha \beta L  K  
) )}{\alpha K}\\
&
\stackrel{\beta K<1}{>}
\frac{s (K - x) (
K (\alpha L - 1) + x (1
-\alpha  L 
) )}{\alpha K}=\frac{s (K - x)^2 
 (\alpha L - 1)}{\alpha K}> 0,
\end{split}
 \end{equation}
where the last inequality holds because $\alpha L>1$.

\noindent
 {\it b)} The sign of $\mathcal{L}_k(t,x,y)$ is determined by the signs of $b_i(x,y), i=0,1,2$, defined  in \refeq{eq:bi}.

Now, since in this case, $0<y<k(x)$,   $b_0(x,y)<0$.  Also, since $\beta K<1$ and $\alpha L>1$, it follows that $b_2(x,y)<0$. Thus, to complete the claim, it suffices to show that $b_1(x,y)<0$ for $0<y<k(x)$. Note that if  $0<y<k(x)$, then $y<k(0)=L$, because $k(x)$ is decreasing in $x$.  We can therefore focus on  $0<y<L$ and $y<k(x)$.

By  \eqref{eq:partials_bi},  for any fixed $x$, $b_1(x,y)$  defined in \eqref{eq:bi} is a convex function of $y$, and hence the maximum must occur  on the  boundary of the region 
$\{(x,y)\in [0,\infty)^2 : y \leq k(x)\}$.

\noindent
(i) On the line segment $x=0$ and $0 \leq y\leq L$,
$b_1(0,y)=\alpha ryL(y-L)\leq 0$.

\noindent
(ii) On the line segment $y=0$ and  $0\leq x\leq \frac{1}{\beta}$, $b_1(x,0)= L^2x(r( \beta K - 1)+ s \beta K (\beta x - 1))\leq 0$. 

\noindent
(iii) On the line segment  $y=k(x)$ for $0\leq x\leq \frac{1}{\beta}$,
$$b_1(x,k(x))= r x \beta L^2(K(1+\alpha L(\beta x-1))-x)$$
with
$$\frac{d b_1(x,k(x))}{dx} = 2rL^2  \left(-1+\beta\left( \alpha \beta KLx-\frac{3}{2}\alpha KL +
\frac{1}{2}K+x\right) \right)$$ and
$$ \frac{d^2 b_1(x,k(x))}{dx^2}=2 r  \beta L^2  ( \alpha \beta K L+1)>0.$$

Therefore, $b_1(x,k(x))$ is convex and the maximum is attained at an end point, shown to be non-positive at the end points  in (i) and (ii) above. Note also that  $b_1(x,y)=0$ if and only if $(x,y)\in \left\{(0,L), (0,0), (\frac{1}{\beta},0)\right\}$. The result follows.

\end{proof}

As in the classical representation of a phase plane, we subdivide the positive cone into regions separated by nullclines. In this case, we therefore obtain the representation in Fig.~\ref{fig:case0} with the middle region $\Omega_2$ and the two other regions located above, $\Omega_3$, and below, $\Omega_1$. More precisely, 
\begin{align*}
\Omega_1&=\{(x,y)\in (0,\infty)^2 \colon \, y<h(x)\},\\
    \Omega_2 &:= \{(x,y)\in (0,\infty)^2\colon  \max\{0,h(x)\}\leq y\leq \max\{0,k(x)\},\\ 
    \Omega_3 &:= \{(x,y)\in (0,\infty)^2\colon \max\{0, k(x)\}<y\}.
\end{align*}

The classical representation is then extended using the information provided in Lemma~\ref{lem:signLhLkCase1} that gives the signs of the Root-operators associated with the nullclines.

\begin{figure}[h!]
    \centering
    \includegraphics[width=0.5\linewidth]{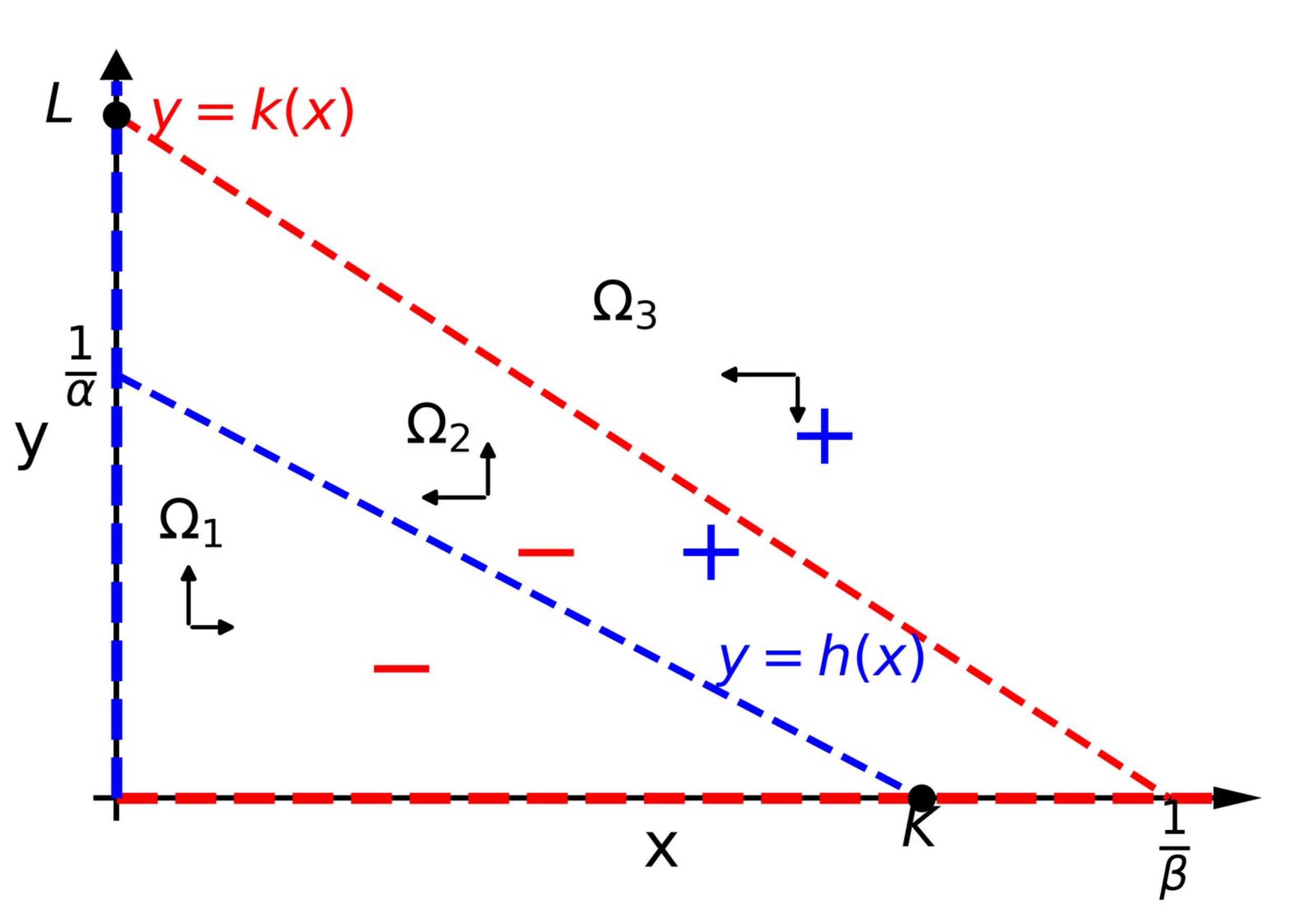}
    \caption{Regions considered with generic nullclines $y=h(x)$ (blue dashed line) and $y=k(x)$  (red dashed line), in the case where $\alpha L>1$ and $\beta K<1$. The colored signs relate to the sign of the corresponding Root-operators $\mathcal{L}_h$ (in blue) and $\mathcal{L}_k$ (in red)  in the regions,  $\Omega_i$, $i\in \{1,2,3\}$. The signs are based on the results in Lemma~\ref{lem:signLhLkCase2}.
   The arrows represent the sign of $x^\Delta$ (horizontal) and $y^\Delta$ (vertical). Each region exhibits component-wise monotonicity.   }
    % change to red and blue color
    \label{fig:case0}
\end{figure}

\begin{theorem}\label{thm:y_only_gas}
     Consider \eqref{eq:CompT} with $\alpha L >1$ and $\beta K<1$. Then, the region   $\Omega_2$  is positively invariant. Moreover, all solutions with positive initial conditions converge to the equilibrium $E^*_L=(0, L)$. 
\end{theorem}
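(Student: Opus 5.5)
Our plan is to read Lemma~\ref{lem:signLhLkCase1} through Theorem~\ref{thm:need} and then finish with Proposition~\ref{prop:convergence}.

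\emph{Step 1: positive invariance of $\Omega_2$.} Let $(x(T),y(T))\in\Omega_2$, so $x(T),y(T)>0$ and $h(x(T))\le y(T)\le k(x(T))$. We must show $(x^\sigma,y^\sigma)\in\Omega_2$ at right-scattered $T$, and that the solution does not leave at right-dense $T$. Positivity of both components comes from Proposition~\ref{prop:nonneg}. For the lower boundary, suppose $y(T)>\max\{0,h(x(T))\}$. Lemma~\ref{lem:signLhLkCase1}a) gives $\mathcal{L}_h(T,x,y)>0$, which means $y^\sigma>h(x^\sigma)$; Theorem~\ref{thm:need} also covers the right-dense case. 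Similarly, if $0<y(T)<k(x(T))$, Lemma~\ref{lem:signLhLkCase1}b) gives $\mathcal{L}_k<0$, hence $y^\sigma<k(x^\sigma)$.

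Boundary points need care. If $y=h(x)$ with $x<K$, then $a_0=0$ while $a_1,a_2>0$ (by the estimate \eqref{eq:sign_a1} and $\alpha L>1$, $\beta K<1$). So $\mathcal{L}_h\ge0$, with equality only when $\mu=0$. If $y=k(x)$, then $b_0=0$ and $b_1,b_2\le0$. At right-dense points on the boundary we argue from the vector field: on $y=h(x)$ we have $x^\Delta=0$ and $y^\Delta>0$ (because $h<k$ there), so the solution moves into $\Omega_2$. On $y=k(x)$ we have $y^\Delta=0$ and $x^\Delta<0$, and since $k$ is decreasing, moving left keeps the point below $k$. The main obstacle is making this first-order argument rigorous on a general time scale. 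We would handle it by noting that the vector field at $x=0$ points to $(0,L)$, and that the strict inequalities hold in a neighbourhood of the boundary. The cases where $h(x)\le 0$ (that is, $x\ge K$) are dealt with by the $y>0$ constraint.

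\emph{Step 2: monotonicity.} In $\Omega_2$ we have $y\ge h(x)$, so $1-\frac{x}{K}-\alpha y\le 0$ and, by \eqref{eq:xy_Delta}, $x^\Delta\le0$. Also $y\le k(x)$, so $y^\Delta\ge0$. Both components are therefore monotone for all future times, and Proposition~\ref{prop:convergence} shows the solution converges to an equilibrium in $\overline{\Omega_2}$. The candidates are $E_0^*$, $E_K^*$ and $E_L^*$; the interior equilibrium $E^*$ is not feasible. Because $y$ is nondecreasing and $y(T)>0$, the limit is not $E_0^*$ or $E_K^*$. Hence the solution converges to $E_L^*$.

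\emph{Step 3: the other regions.} For a start in $\Omega_1$ (below $h$), both $1-\frac{x}{K}-\alpha y>0$ and $1-\frac{y}{L}-\beta x>0$, so $x$ and $y$ both increase. For a start in $\Omega_3$ (above $k$), both factors are negative and both components decrease. If the solution stays in $\Omega_1$ or $\Omega_3$ for all time, it is eventually monotone. Proposition~\ref{prop:convergence} then gives convergence to an equilibrium in the closure of that region; the only one available is $E_L^*$ (and $E_K^*$ lies on the boundary of $\Omega_1$). Convergence to $E_K^*$ is excluded because $y$ increases from a positive value. Otherwise the solution enters $\Omega_2$, and Step~1 applies.

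One issue remains: a jump at a right-scattered point could carry the solution from $\Omega_1$ directly over $\Omega_2$ into $\Omega_3$, or the reverse. The signs of the Root-operators exclude this. In $\Omega_1$ we have $y<h(x)<k(x)$, so Lemma~\ref{lem:signLhLkCase1}b) gives $y^\sigma<k(x^\sigma)$. In $\Omega_3$ we have $y>k(x)>h(x)$, so Lemma~\ref{lem:signLhLkCase1}a) gives $y^\sigma>h(x^\sigma)$. Thus each region can only pass into $\Omega_2$, and the only switch between regions is a one-way entry into $\Omega_2$. This completes the argument.
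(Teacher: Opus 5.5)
Your proposal is correct and follows the paper's proof essentially step for step. It uses local invariance of $\Omega_2$ from Lemma~\ref{lem:signLhLkCase1} and Theorem~\ref{thm:need}, monotone convergence to $E_L^*$ via Proposition~\ref{prop:convergence}, and the Root-operator signs to show that $\Omega_1$ and $\Omega_3$ can only be left into $\Omega_2$. One slip: $\overline{\Omega}_1$ contains only $E_0^*$ and $E_K^*$, not $E_L^*$, and both are excluded because $y$ increases from a positive value.

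The right-dense boundary step you flag as the main obstacle is easy because the nullclines are affine. At a right-dense $T$ with $y(T)=h(x(T))$ and $0<x(T)<K$, the function $y-h(x)$ has $\Delta$-derivative $y^\Delta(T)>0$ there. On $y=k(x)$, the function $y-k(x)$ has $\Delta$-derivative $L\beta x^\Delta(T)<0$. Since $T$ is right-dense, each sign forces the required strict inequality for $t$ just after $T$.
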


\begin{proof}
To show that $\Omega_2$ is positively invariant, let $(x(T),y(T))\in \Omega_2$ for some $T\in \T$. First, we  discuss the boundary points  of $\Omega_2$: 
a) \, $\left\{(x,y)\colon y=0,\, x\in \left[K, \frac{1}{\beta}\right]\right\}$, 
   b)\,  $\{(x,y)\colon y=h(x), \, x\in (0, K)\}$, 
c)\, $\left\{(x,y)\colon x=0, \, y\in \left[\frac{1}{\alpha}, L\right]\right\}$, 
d)\, $\left\{(x,y)\colon y=k(x), \, x\in \left(0,  \frac{1}{\beta}\right)\right\}$.

a) Assume first that there exists $T$ such that $y(T)=\max\{0,h(x(T))\}=0$ and $K\leq x(T)\leq \frac{1}{\beta}$. Clearly, if $x(T)=K$, then the solution remains at this equilibrium, confirming that the solution remains in $\Omega_2$. If $K<x(T)\leq \frac{1}{\beta}$,  by \eqref{eq:ai}, $a_0(x(T),0)\geq 0$, $a_1(x(T),0)>0$, and $a_2(x(T),0)>0$ so that $\mathcal{L}_h(T,x(T),0)\geq 0$ with equality iff  $a_0(x(T),0)=0$ and $\mu(T)=0$. Furthermore, by \eqref{eq:LhLk}, $\mathcal{L}_k(T,x(T),0)=-L+L\beta \frac{x(T)(1+\mu(T) r)}{1+\mu(T) r\frac{x(T)}{K}}=-K\left(\frac{x(T)(1+\mu(T) r)}{1+\mu(T) r\frac{x(T)}{K}}\right)$. Note that by \eqref{eq:xy_sigma}, $\frac{x(T)(1+\mu(T) r)}{1+\mu(T) r\frac{x(T)}{K}}$ is the evaluation of $x^\sigma(T)$ when $y(T)=0$. Since solutions are nonnegative, we know that $x^\sigma(T)\geq 0$ and by \eqref{eq:xy_Delta}, since $x(T)>K$, $x^\Delta(T)<0$ so that $x^\sigma(T)\leq x(T)$. Thus, $k(x^\sigma(T))> k(x(T))\geq 0$ and therefore $\mathcal{L}_k(T,x(T),y(T))<0$. By Theorem~\ref{thm:need}, there exists $\epsilon>0$ such that $(x(t),y(t))$ remains above the line $y=\max\{0,h(x)\}$ and below the line $y=k(x)$ for all $t\in (T,T+\epsilon)_\T$.  Hence, $(x(t),y(t))\in \Omega_2$ for all $t\in (T,T+\epsilon)_\T\neq \emptyset$.

b) We continue the boundary discussion by assuming that there exists $T$ such that $x(T)\in (0, K)$ and $y(T)=h(x(T))$. The latter implies that $x^\sigma(T)=x(T)$. 
By the definition of the Root-operators in Def.~\ref{def:Rootop}, we have 
$$\mathcal{L}_h(T,x(T),y(T))=y^\sigma(T) - h(x^\sigma(T))=y^\sigma(T) - h(x(T))=y(T)+\mu(T)y^\Delta(T)-h(x(T))$$
and since $y^\Delta (T)>0$ for $y=h(x)$ and $x\in (0,K)$, $\mathcal{L}_h(T,x(T),y(T))\geq 0$ with equality iff $\mu(T)=0$. Using Def.~\ref{def:Rootop}, we also have
\begin{align*}
\mathcal{L}_k(T,x(T),y(T))&=y^\sigma(T) - k(x^\sigma(T))=y^\sigma(T) - k(x(T))=y(T)+\mu(T)y^\Delta(T)-k(x(T))\\
&=h(x(T))+\mu(T)y^\Delta(T)-k(x(T))\\
&\stackrel{\eqref{eq:xy_Delta}}{=}h(x(T))-k(x(T))+\mu(T)\frac{sy(T)(k(x(T))-y(T)}{L+s\mu(T)(y(T)+\beta L x(T))}\\
&=h(x(T))-k(x(T))+\mu(T)\frac{sh(x(T))(k(x(T))-h(x(T))}{L+s\mu(T)(y(T)+\beta L x(T))}\\
&=-(k(x(T))-h(x(T)))\left\{1-\mu(T)\frac{sy(T)}{L+s\mu(T)(y(T)+\beta L x(T))}\right\}<0,
\end{align*}
because $k(x(T))>h(x(T))$ and $\frac{\mu(T) sy(T)}{s\mu(T)y(T)+L(1+\mu(T)\beta s  x(T))}<1$. By Theorem~\ref{thm:need}, this implies the existence of $\epsilon>0$ such that $(x(t),y(t))\in \Omega_2$ for $t\in (T,T+\epsilon)_\T\neq \emptyset$. 

c) Assume now that there exits $T$ such that $x(T)=0$ and $\frac{1}{\alpha}\leq y(T)\leq L$. Note that if $y(T)=L$,  then this is an equilibrium and $(x(t),y(t))=(0,L)\in \Omega_2$ for all $t\in [T,\infty)_\T$. Thus, consider $y(T)\in \left[\frac{1}{\alpha}, L\right)$. Since $y(T)\geq \frac{1}{\alpha}$,  by \eqref{eq:ai}, $a_0(0,y(T))\geq 0$ with equality iff $y(T)=\frac{1}{\alpha}$, 
\begin{eqnarray*}
a_1(0,y(T))&=r\alpha^2y^2(T) K L +y(T)(-Ks-\alpha L r K +sK\alpha L)\\
&=r\alpha y(T) K L (\alpha y(T)-1) +yKs \alpha \left(L-\frac{1}{\alpha}\right)>0,
\end{eqnarray*}
because $y(T)\geq \frac{1}{\alpha}$ and $L>\frac{1}{\alpha}$. By the same reasoning, $a_2(0,y(T))>0$. 

By \eqref{eq:bi}, $b_0(0,y(T))<0$ and $b_1(0,y(T))<0$ for $y(T)<L$, and $b_2(0,y(T))=0$. Hence, by applying Theorem~\ref{thm:need} (twice), there exists $\epsilon>0$ such that $(x(t),y(t))\in \Omega_2$ for $t\in (T,T+\epsilon)_\T\neq \emptyset$. 

d) Lastly, let us assume that there exists $T$ such that $x(T) \in  \left(0,\frac{1}{\beta}\right)$ and $y(T)=k(x(T))$. Then $y^\sigma(T)=y(T)=k(x(T))$ and, by Def.~\ref{def:Rootop}, 
$$\mathcal{L}_k(T,x(T),y(T))=y^\sigma(T)-k(x^\sigma(T))=k(x(T))-k(x(T)+\mu(T)x^\Delta(T)).$$
Since $y(T)=k(x(T))>h(x(T))$, $x^\sigma(T)\leq x(T)$ with equality iff $\mu(T)=0$. Given that $y=k(x)$ is a decreasing function, $k(x(T))\geq k(x^\sigma(T))$ so that $\mathcal{L}_k(T,x(T),y(T))\leq 0$ with equality iff $\mu(T)=0$. Similarly, 
\begin{align*}
    \mathcal{L}_h(T,x(T),y(T))&=y^\sigma(T)-h(x^\sigma(T))=y(T)-h(x(T)+\mu(T)x^\Delta (T))\\
    &=k(x(T))-\frac{1}{\alpha}+\frac{1}{\alpha K}x(T)+\frac{1}{\alpha K}\mu(T) x^\Delta (T)\\
    &=k(x(T))-h(x(T))+\frac{1}{\alpha K}\mu(T) x^\Delta (T)\\
    &\stackrel{\eqref{eq:xy_Delta}}{=}k(x(T))-h(x(T))+\frac{1}{\alpha K}\mu(T) \frac{rx(T)\alpha (h(x(T))-y(T))}{1+r\mu(T)\left(\frac{x(T)}{K}+\alpha y(T)\right)}\\
    &=(k(x(T))-h(x(T)))\left\{1- \frac{r\mu(T)\frac{x(T)}{K}}{1+r\mu(T)\left(\frac{x(T)}{K}+\alpha y(T)\right)}\right\}\geq 0
\end{align*}
with equality iff $\mu(T)=0$. Since $\mathcal{L}_k(T,x(T),y(T))\leq 0$ and $\mathcal{L}_h(T,x(T),y(T))\geq 0$, we apply Theorem~\ref{thm:need} (twice), to confirm that there exits $\epsilon>0$ such that $(x(t),y(t))\in \Omega_2$ for $t\in (T,T+\epsilon)_\T\neq \emptyset$. 
This completes the boundary cases of $\Omega_2$. 

 Now assume that there exists $T$ such that $(x(T),y(T))$ is in the interior of $\Omega_2$. By Lemma~\ref{lem:signLhLkCase1}a), $\mathcal{L}_h(T,x(T),y(T))>0$. Therefore, by Theorem~\ref{thm:need}, there exists $\epsilon_1>0$ such that $y(t)>h(x(t))$ for $t\in [T, T+\epsilon_1]_\T$, where $T+\epsilon_1\geq \sigma(T)$. By Lemma~\ref{lem:signLhLkCase1}b), $\mathcal{L}_k(T,x,y)<0$. Therefore,  by Theorem~\ref{thm:need}, there exists $\epsilon_2>0$ such that $y(t)<k(x(t))$ for $t\in [T, T+\epsilon_2]_\T$, where $T+\epsilon_2\geq \sigma(T)$. Choosing $\epsilon=\min\{\epsilon_1, \epsilon_2\}$ confirms that the solution remains in $\Omega_2$. 

Next, we show that all solutions with positive initial conditions converge to $E^*_L$. Since $\Omega_2$ is positively invariant,   and $x^\Delta>0$ and $y^\Delta <0$, whenever $(x,y)\in \Omega_2$, the component-wise monotonicity implies that, by Proposition~\ref{prop:convergence}, that $(x,y)$ converges to an equilibrium in  $\Omega_2$, and $E^*_L$ is the only possibility due to the direction of monotonicity. It therefore suffices to show that for any $(x(0),y(0))\notin \Omega_2$, there exists $T$ such that $(x(T),y(T))\in \Omega_2$.

Suppose there exists $T\in \T$ such that $y(T)=h(x(T))$, then $x^\Delta(T)=0$ and $y^\Delta(T)>0$.
 If $T$ is right-scattered, then $y^\Delta(T)>0$ implies that  $y^\sigma(T)>y(T)=h(x(T))=h(x^\sigma(T))$. Furthermore, since $y(T)=h(x(T))<k(x(T))$, by Lemma~\ref{lem:signLhLkCase1}b), $\mathcal{L}_k((T),x(T),y(T))<0$, so that $y^\sigma(T)<k(x^\sigma(T))$, confirming that $(x^\sigma(T), y^\sigma(T))\in \Omega_2$. 
Instead, if $T$ is right-dense and $x, y$ are rd-continuous, $y^\Delta(T)>0$ and $x^\Delta(T)=0$ imply that there exists $\epsilon_2>0$ such that $y(t)>y(T)$ and $y(t)>h(x(t))$ for $t\in (T, T+\epsilon_2]_\T$. Since $y(T)=h(x(T))<k(x(T))$, by Lemma~\ref{lem:signLhLkCase1}b), $\mathcal{L}_k(T,x(T),y(T))<0$ which implies by Theorem~\ref{thm:need},
that there exists $\epsilon_3>0$ such that $y(t)<k(x(t))$ for $t\in (T, T+\epsilon_3]_\T$, where $T+\epsilon_3\geq \sigma(T)$. Thus,  there exists $ \overline{\epsilon}:=\min\{\epsilon_3, \epsilon_2\}$ such that $(x(t),y(t))\in \Omega_2$ for $t\in [T, T+\overline{\epsilon}]_\T$.

If  there exists $T\in \T$ such that $y(T)=k(x(T))$, then $y^\Delta(T)=0$ and $x^\Delta(T)<0$. If $T$ is right-scattered, then  $y^\sigma(T)=y(T)=k(x(T))<k(x^\sigma(T))$ since $y=k(x)$ is a decreasing function. Furthermore, by Lemma~\ref{lem:signLhLkCase1}a), $\mathcal{L}_h((T),x(T),y(T))>0$, so that $y^\sigma(T)>h(x^\sigma(T))$, confirming that $(x^\sigma(T), y^\sigma(T))\in \Omega_2$. 
Instead, if $T$ is right-dense, then there exists $\epsilon_1>0$ such that $x(t)<x(T)$ and, since $y, x, h$ are rd-continuous, $y(t)<k(x(t))$ for $t\in (T, T+\epsilon_1]_\T\neq \emptyset$. Since, by Lemma~\ref{lem:signLhLkCase1}a), $\mathcal{L}_h(T,x(T),y(T))>0$, 
by Theorem~\ref{thm:need}, there exists $\epsilon_2>0$ such that $y(t)>h(x(t))$ for $t\in (T, T+\epsilon_2]_\T\neq \emptyset$. Thus, there exists $\overline{\epsilon}=\min\{\epsilon_1, \epsilon_2, \epsilon_3\}$ such that $(x(t),y(t))\in \Omega_2$ for $t\in (T, T+\overline{\epsilon}]_\T\neq \emptyset$.

Recall that $\Omega_1=\{(x,y)\in (0,\infty)^2 \colon \, y<h(x)\}$, see also Fig.~\ref{fig:case0}. 
Let $(x(t_0),y(t_0))\in \Omega_1$.  Either i) $(x(t),y(t))\in \overline{\Omega}_1$, where $\overline{\Omega}_1$ is the closure of $\Omega_1$, for all  $t\in (t_0,\infty)_\T$ or ii) there exists $T>0$ such that $(x(T),y(T))\notin \Omega_1$. 
i) If $(x(t),y(t))\in \overline{\Omega}_1$ for all $t\in (t_0,\infty)_\T$, then, by Proposition~\ref{prop:convergence}, $(x,y)$ converges to one of the  equilibria in $\overline{\Omega}_1$, either $(0,0)$ or $(K,0)$. However, since $x^\Delta, y^\Delta>0$ for $(x,y)\in \Omega_1$, neither of these equilibria can be approached. Thus, there must exist (a first) $T>t_0$ such that $(x(T),y(T))\notin \Omega_1$, i.e., $y(T)\geq h(x(T))$. If $y(T)=h(x(T))$, then we proved  above  that $(x,y)\in \Omega_2$ for all   $t\in (T,\infty)_\T$. If $y(T)>h(x(T))$, then by Lemma~\ref{lem:signLhLkCase1}b), $y(T)<k(x(T))$, so that $(x(T),y(T))\in \Omega_2$ and, by the above argument, $(x(t),y(t))\in \Omega_2$ for all $t\in (T,\infty)_\T$  and the solution converges to $E^*_L$.

Finally, recall that 
$\Omega_3:=\{(x,y)\in (0,\infty)^2\, \colon\, y>k(x)\}$. 
Let $(x(t_0),y(t_0))\in \Omega_3$.  Either i) $(x(t),y(t))\in \overline{\Omega}_3$ for all $t\in (t_0,\infty)_\T$ or ii) there exists $T>0$ such that $(x(T),y(T))\notin \Omega_3$. If $(x(t),y(t))\in \overline{\Omega}_3$ for all $t\in (t_0,\infty)_\T$, then, due to the component-wise monotonicity in $\Omega_3$, by Proposition~\ref{prop:convergence}, $(x,y)$ converges to an equilibrium in  $\overline{\Omega}_3$. The only equilibrium in  $\overline{\Omega}_3$ is $E^*_L$, as we  claim. If instead there exists (a first) $T>t_0$ such that $(x(T),y(T))\notin \Omega_3$, then $y(T)\leq k(x(T))$. In the case of equality, the above discussion implies that $(x(t),y(t))\in \Omega_2$ for all $t\in (T,\infty)_\T$. If $y(T)>k(x(T))$, then since $(x(t),y(t))\in \Omega_3$ for all $t\in (t_0,T)_\T$, $y(t)>h(x(t))$ for all $t\in (t_0,T)_\T$. In particular, for $t_1$ such that $\sigma(t_1)=T$. By  Lemma~\ref{lem:signLhLkCase1}a), $y^\sigma(t_1)=y(T)>h(x(T))$. This implies that $(x^\sigma(T), y^\sigma(T))\in \Omega_2$, so that by the above argument, $(x(t),y(t))\in \Omega_2$ for all $t\in [T,\infty)_\T$ and the solution converges to $E^*_L$. 
\end{proof}

\begin{example}
An example using the quantum time scale $\T=2^{\mathbb{N}_0}$ is provided in Fig.~\ref{fig:Example1_C1_g} that illustrates  the time dependence of the Root-sets.  As $t$ increases in the time scale, \sout{$\T=2^{\mathbb{N}_0}$} from $t=2^0=1$ to $t=2^3$, the corresponding Root-curves move further and further away  from the nullcline also demonstrating  that iterations can enter the positively invariant region $\Omega_2$ within one iteration, at some time point $t>0$,  despite being far away.   Fig.~\ref{fig:Example1_C1_g} also highlights a property indicated in Fig.~\ref{fig:case0}, that the Root-curves of the nullcline $y=k(x)$ (red dashed line) remain above this nullcline, so that any region in $[0,\infty)^2$ below $y=k(x)$ has a negative Root-operator value. Similarly, since all Root-sets remain below the (blue dashed) nullcline $y=h(x)$, the corresponding Root-operator does not change sign in any region above the line $y=h(x)$.

\begin{figure}[h!]
    \centering
    \includegraphics[width=0.85\linewidth]{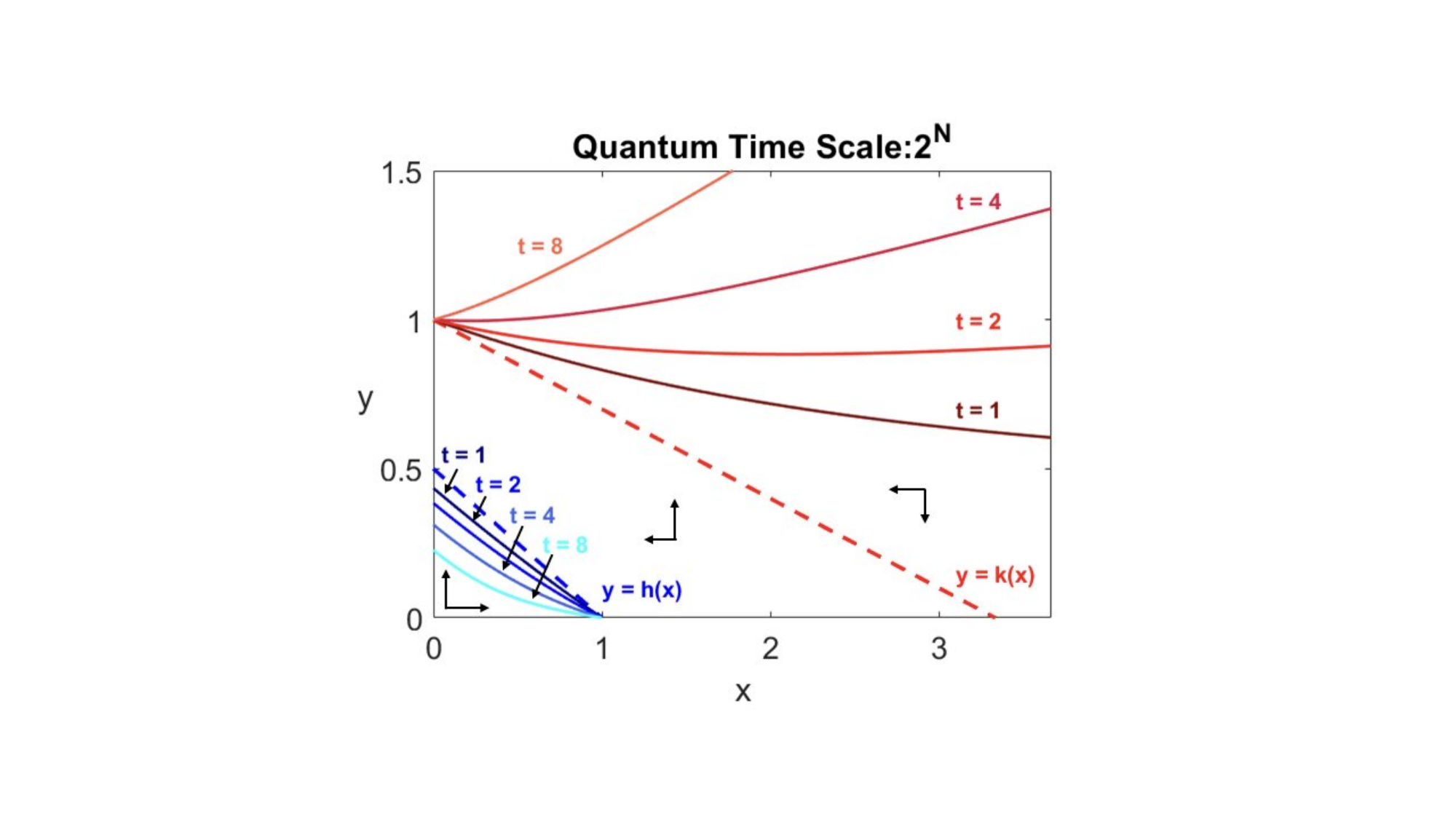}
    \caption{Dynamic  Phase Plane of \eqref{eq:CompT} for  the quantum time scale $2^{\mathbb{N}_0}$ including the (time-dependent) Root-curves for specific $t$-values. The  (dashed)  nontrivial $y$-nullcline and its (solid) Root-set $\mathcal{R}_k$ are displayed in shades of red. The (dashed)  nontrivial $x$-nullcline and its (solid) Root-set $\mathcal{R}_h$ are in shades of blue. The parameter values are: $\alpha  = 2$, $\beta=0.3$, $r=0.5$, $s=0.3$ and $L=K=1$.}
    \label{fig:Example1_C1_g}
\end{figure}

\end{example}

\begin{example}
 The panels of Fig.~\ref{fig:Example1_C1} contain the first three iterations of the solution for the initial condition $(x(1),y(1))=(2,1)$ at time values $t_0=2^0=1$, $\sigma(t_0)=2^1=2$, and $\sigma^2(t_0)=2^2=4$. At each  of these time values, the  sign of the Root-operator for that particular time point determines whether the next iterate (i.e., the solution at time $\sigma(t)$) is above or below the corresponding nullcline. The Root-curves that are illustrated and the iterate of the sample orbit that is highlighted by a purple star share the same time value. Thus, the positioning of the star relative to the Root-curves and associated nullclines determines the positioning of the next iterate. For example,  consider the initial condition when time $t_0=2^0=1$ at $(x(2^0),y(2^0))=(2,1)$,  (see the purple  star in Fig.~\ref{fig:Example1_C1}a). To determine the position of the next iterate $(x(2^1),y(2^1))$, we include the Root-operator that changes signs at   its associated Root-set.  Since $(x(2^0),y(2^0))$ is in a region where the Root-operators associated with $y=k(x)$ (red dashed line) and $y=h(x)$ (blue dashed line) are positive are both positive, (see the red and the blue ``+" signs), the next iterate $(x(2^1),y(2^1))$ remains above these red and blue nullclines, respectively. However, at time $t=2^1$,  (see Fig.~\ref{fig:Example1_C1}b), the Root-sets change and now $(x(2^1),y(2^1))$ is in a region of a red ``--" sign and a blue ``+" sign, indicating that the next iterate $(x(2^2),y(2^2))$ lies below the red nullcline and above the blue nullcline, see purples star in Fig.~\ref{fig:Example1_C1}c). Thus, $(x(2^2),y(2^2))$ enters the positively invariant region and  converges, by Theorem~\ref{thm:y_only_gas}, to $(0,L)$ as $t\to \infty$.

\begin{figure}[h!]
    \centering
\includegraphics[scale=0.25]{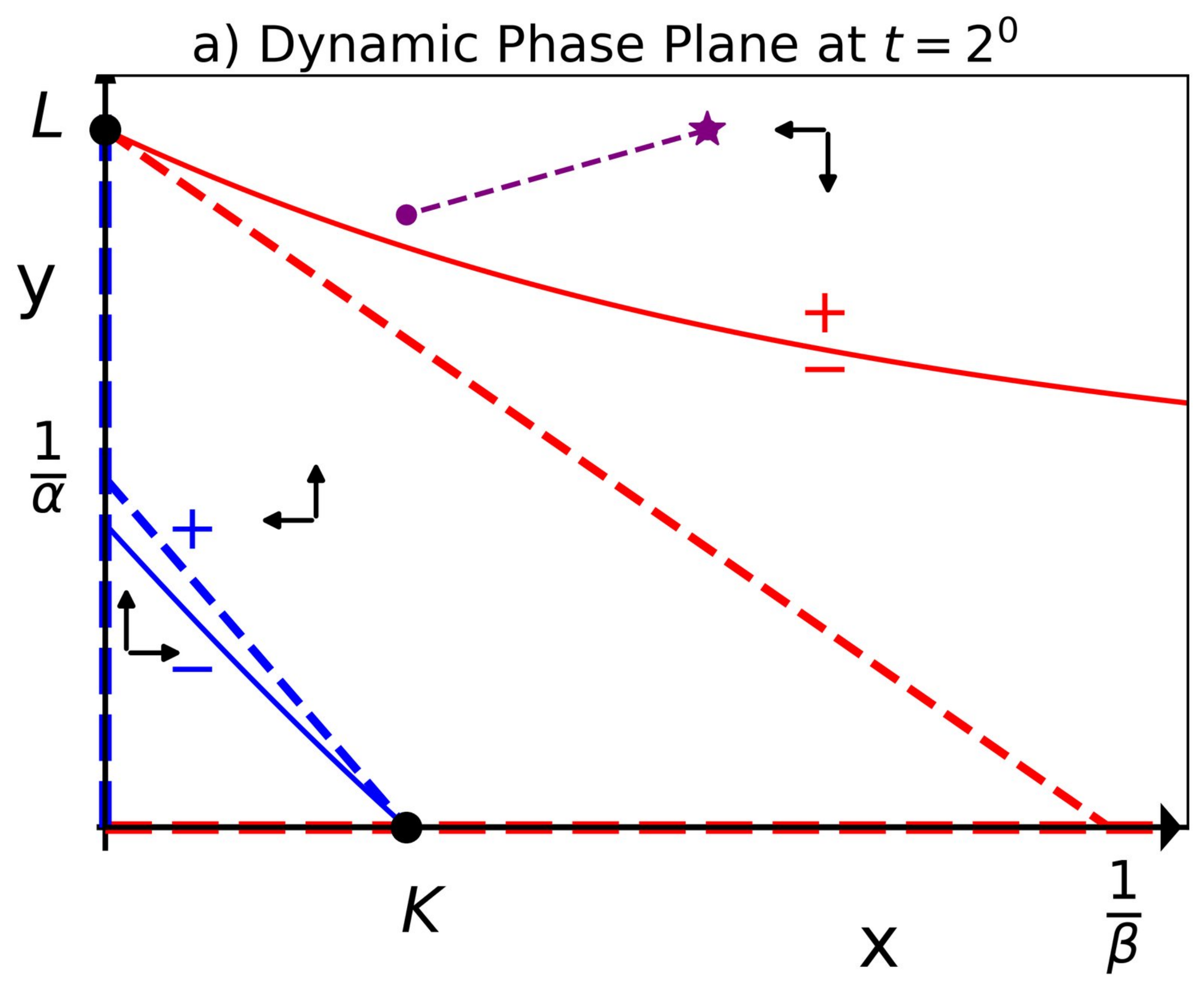}
\includegraphics[scale=0.25]{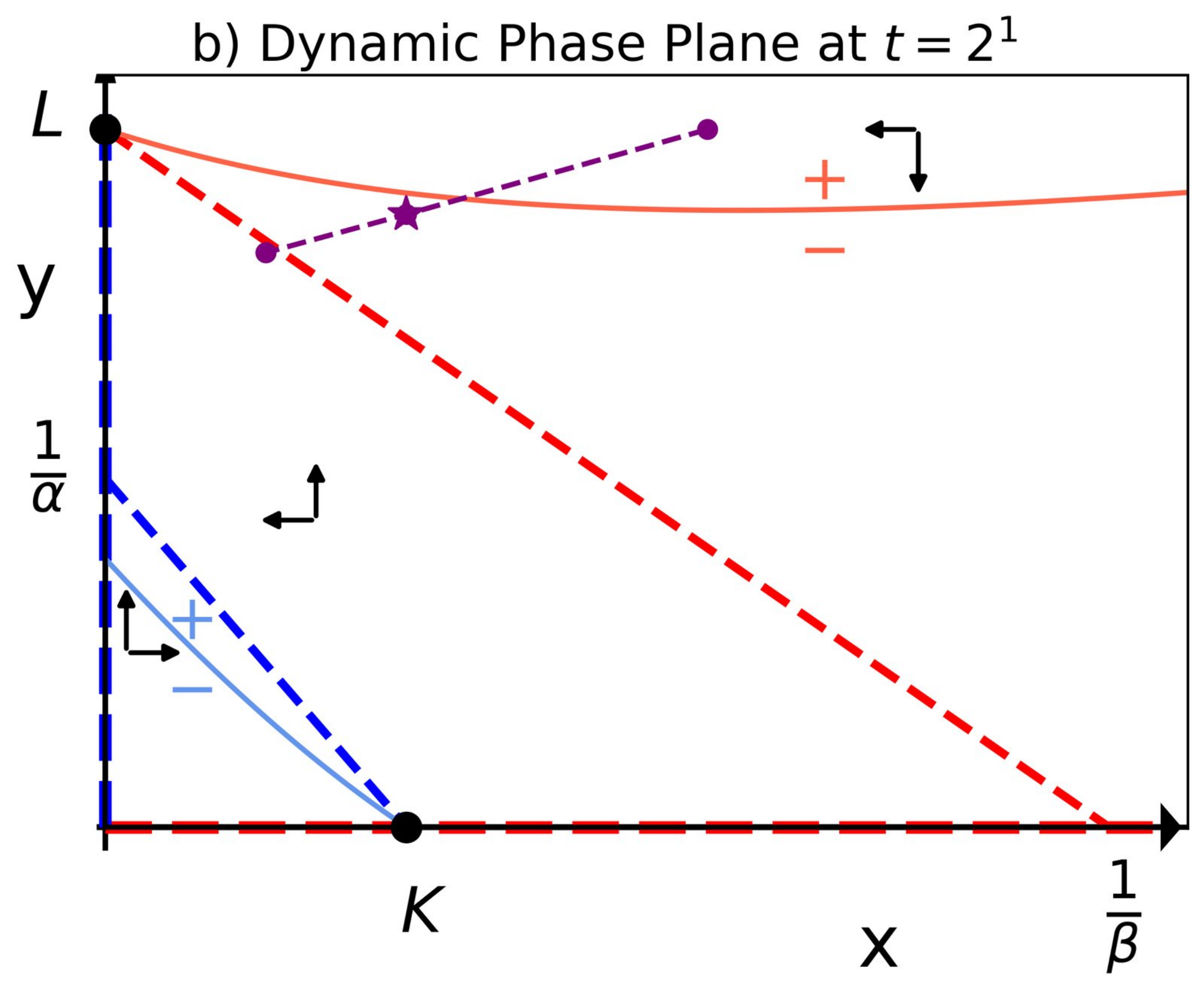}
\includegraphics[scale=0.25]{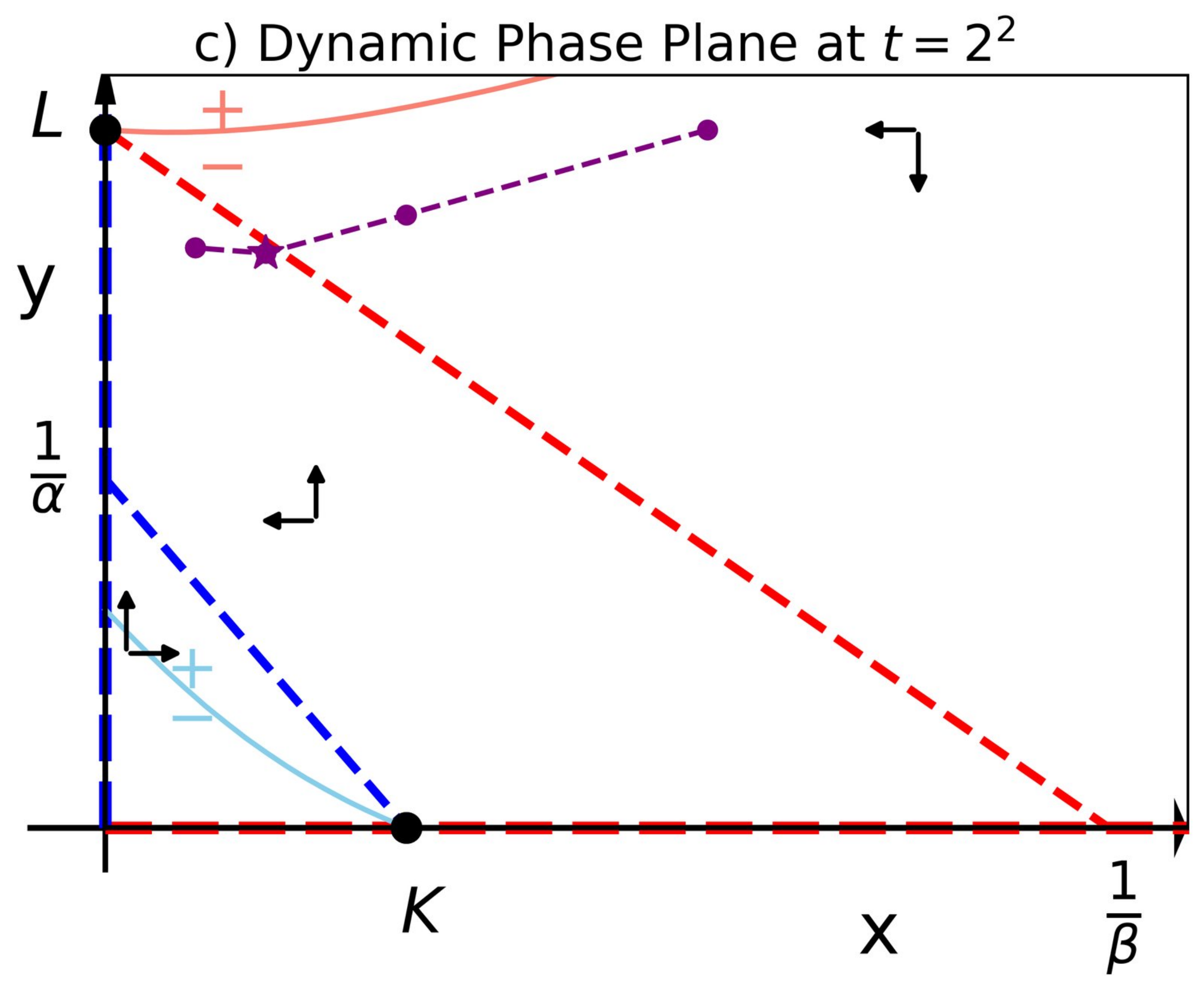}
    \caption{ Dynamic Phase Plane for $t\in\{2^0, 2^1, 2^2\}$ and the first three iterations of a (purple) solution starting at $(x(2^0),y(2^0))=(2,1)$ for parameter values as in Fig.~\ref{fig:Example1_C1_g}.  The Root-sets,  $\mathcal{R}_h$ and $\mathcal{R}_k$, are solid in the color associated with the same colored (dashed) nullcline;  red for $y=k(x)$ and blue for $y=h(x)$. Only the Root-set (at time $T$) relevant to determine the next iterate ($T+1$) based on the positioning of the current iterate $(x(T),y(T))$, indicated by a star, are illustrated. Since $(x(2^0),y(2^0))$ is in a region with a red "+", the next iterate $(x(2^1),y(2^1))$ remains above the red nullcline. However, since $(x(2^1),y(2^1))$ is below the Root-curve associated with the red nullcline, at time $t=2^1$, the next iterate $(x(2^2),y(2^2))$ is mapped below the red nullcline. Since the Root-operator associated with $y=h(x)$ is positive at the point $(x(2^1),y(2^1))$, the next iterate remains above the blue nullcline. Hence, $(x(2^2),y(2^2))$ is the first iterate of this orbit being mapped into the positively invariant region  bounded by the two nontrivial nullclines in the nonnegative cone. 
    }
    \label{fig:Example1_C1}
\end{figure}

\end{example}

\begin{remark}
    If $\T=\R$, then Fig.~\ref{fig:case0} already describes  the dynamic phase plane because for right-dense points $t\in \T$, the Root-curves are identical to the nullclines. Hence, the Root-operator associated with $y=k(x)$ is positive in regions $\Omega_3$ and negative in regions $\Omega_1\cup \Omega_2$. Similarly, the Root-operator associated with $y=h(x)$ is negative in $\Omega_1$ and positive in regions $\Omega_2\cup \Omega_3$. Thus, by Theorem~\ref{thm:need}, if $(x(T),y(T))\in \Omega_3$, there exists $\epsilon>0$ such that for $t\in [T,T+\epsilon)$, $(x(t),y(t))\in \Omega_3$. This however does not imply that the solution cannot cross the associated nullcline, as $\epsilon$ depends on $(x(T),y(T))$, but is simply a consequence of the density of real numbers and the continuity of solutions.  
\end{remark}

\subsubsection{Global Dynamics: An interior equilibrium exists}

 We now assume that  $y=h(x)$ and $y=k(x)$   intersect in $\R^+$ $=(0,\infty)$, in which case there exists a unique coexistence equilibrium $E^*=(x^*,y^*)$ given in \eqref{eq:equilbria}.  This occurs when either
\begin{enumerate}
 \item[(III)] $\alpha L<1$ and $\beta K<1$, or 
    \item[(IV)] $\alpha L>1$ and $\beta K>1$.    
\end{enumerate}
Here, strict inequality is necessary for all of the inequalities  in order to guarantee an interior equilibrium. 

 The following result is used together with the Dynamic Phase Plane to determine the global dynamics in each of the cases (III) and (IV).

\begin{proposition}\label{Prop:boxes}
   Define $$\mathcal{B}_0:=\{(x,y)\in [0,\infty)^2 \, \colon \, 0\leq x\leq x^*, \,   \,0 \leq y\leq y^*\}$$ and $$\mathcal{B}_1:=\{(x,y)\in (0,\infty)^2 \, \colon \, x\geq x^*,  \,    \, y\geq y^*\}.$$ 
   If $(x,y)\in \mathcal{B}_0$,  then $(x^\sigma, y^\sigma) \in \mathcal{B}_1$ if and only if $(x,y)=(x^*,y^*)$. 
\end{proposition}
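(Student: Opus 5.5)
We argue by contradiction using the explicit one-step map \eqref{eq:xy_sigma}. The direction ``if'' is immediate: $E^*$ is an equilibrium, so $(x^\sigma,y^\sigma)=(x^*,y^*)\in\mathcal{B}_0\cap\mathcal{B}_1$. For ``only if'', fix $t$ and write $\mu=\mu(t)$. Suppose $(x,y)\in\mathcal{B}_0$ and $(x^\sigma,y^\sigma)\in\mathcal{B}_1$. If $\mu=0$ then $(x^\sigma,y^\sigma)=(x,y)$, so $x\le x^*\le x$ and $y\le y^*\le y$, giving $(x,y)=(x^*,y^*)$ at once. So the substantive case is $\mu>0$. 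Since $\mathcal{B}_1\subset(0,\infty)^2$ and the axes are invariant (Proposition~\ref{prop:nonneg}b), we must have $x,y>0$.

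For $\mu>0$ and $x>0$, the condition $x^\sigma\ge x^*$ reads
\[
x(1+r\mu)\ \ge\ x^*\Bigl(1+r\mu\bigl(\tfrac{x}{K}+\alpha y\bigr)\Bigr).
\]
Since $y\le y^*$, the right-hand side is at least $x^*\bigl(1+r\mu(\tfrac{x}{K}+\alpha y)\bigr)$ evaluated at $y$, and we use the equilibrium relation $\tfrac{x^*}{K}+\alpha y^*=1$, i.e.\ $\alpha y\le 1-\tfrac{x^*}{K}$. Rearranging gives
\[
(x-x^*)(1+r\mu)\ \ge\ x^* r\mu\Bigl(\tfrac{x}{K}+\alpha y-1\Bigr).
\]
The left side is $\le 0$ because $x\le x^*$. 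On the right, $\tfrac{x}{K}+\alpha y-1\le \tfrac{x^*}{K}+\alpha y^*-1=0$. The cleaner route is to write
\[
\tfrac{x}{K}+\alpha y-1=\tfrac{x-x^*}{K}+\alpha(y-y^*),
\]
so the inequality becomes
\[
(x^*-x)\Bigl(1+r\mu-\tfrac{r\mu x^*}{K}\Bigr)\ \le\ r\mu x^*\alpha\,(y-y^*)\ \le 0.
\]
Since $x^*<K$ (from $\tfrac{x^*}{K}=1-\alpha y^*<1$), the bracket is positive, and $x^*-x\ge0$. Hence both sides vanish: $x=x^*$ and, since $r\mu x^*\alpha>0$, $y=y^*$.

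Symmetrically, $y^\sigma\ge y^*$ gives
\[
(y^*-y)\Bigl(1+s\mu-\tfrac{s\mu y^*}{L}\Bigr)\le s\mu y^*\beta(x-x^*)\le0,
\]
which yields the same conclusion, though one inequality already suffices. The main point to check carefully is the algebraic rearrangement and the positivity of $1+r\mu(1-x^*/K)$, which relies on $x^*,y^*>0$ (existence of the interior equilibrium in cases (III)/(IV)). This is where I expect the only real obstacle: making sure the sign manipulations are valid uniformly in $\mu\in(0,\infty)$ and that no division by a possibly vanishing quantity occurs.
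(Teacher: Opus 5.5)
There is a genuine gap: your key inequality has a sign error, and your claim that ``one inequality already suffices'' is false. From $(x-x^*)(1+r\mu)\ge x^*r\mu\bigl(\frac{x-x^*}{K}+\alpha(y-y^*)\bigr)$ you correctly get $(x-x^*)\bigl(1+r\mu-\frac{r\mu x^*}{K}\bigr)\ge r\mu\alpha x^*(y-y^*)$. But when you multiply by $-1$, you flip the left side and the inequality without flipping the right side. Using $1-\frac{x^*}{K}=\alpha y^*$, the correct statement is
\[
(x^*-x)\,(1+r\mu\alpha y^*)\ \le\ r\mu\alpha x^*\,(y^*-y).
\]
For $(x,y)\in\mathcal{B}_0$ both sides are nonnegative, so nothing follows. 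This cannot be fixed within the $x$-equation alone. Take $(x,y)=(x^*,y)$ with $0<y<y^*$ and $\mu>0$. This point lies strictly below the $x$-nullcline, so $x^\sigma=\frac{x^*(1+r\mu)}{1+r\mu(x^*/K+\alpha y)}>x^*$. Hence $x^\sigma\ge x^*$ holds at points of $\mathcal{B}_0$ other than $E^*$. Your symmetric $y$-inequality has the same sign slip; correctly it reads $(y^*-y)(1+s\mu\beta x^*)\le s\mu\beta y^*(x^*-x)$. Ironically, the sign manipulation you flagged as the likely obstacle is exactly where the argument breaks.

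The missing idea is that $x^\sigma\ge x^*$ and $y^\sigma\ge y^*$ must be used jointly. The paper does this by chaining $xy^*\ge x^*y+\frac{x^*-x}{\alpha r\mu}$ with $x^*y\ge y^*x+\frac{y^*-y}{\beta s\mu}$. In your notation, set $A=x^*-x\ge 0$ and $B=y^*-y\ge 0$. Multiply the corrected $x$-inequality by $1+s\mu\beta x^*>0$ and then insert the corrected $y$-inequality:
\[
A(1+r\mu\alpha y^*)(1+s\mu\beta x^*)\le r\mu\alpha x^*(1+s\mu\beta x^*)B\le rs\mu^2\alpha\beta x^*y^*A .
\]
This gives $A(1+r\mu\alpha y^*+s\mu\beta x^*)\le 0$, which forces $A=0$ and then $B=0$. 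Your treatment of $\mu=0$, of the axes, and of the ``if'' direction is fine. With this joint step replacing the single-inequality claim, your argument becomes complete and is essentially the paper's.
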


\begin{proof}
 The result clearly holds for any right-dense points, since  $x^\sigma=x$ and $y^\sigma=y$. Thus, we now consider right-scattered points $t\in \T$ and  proceed using proof by contradiction.  Suppose that  $(x (T),y(T))\in \mathcal{B}_0$  and 
 $(x^\sigma(T), y^\sigma(T))\in \mathcal{B}_1$ for some right-scattered $T\in \T$. This implies that $x^\sigma(T)\geq x^*$ and $y^\sigma(T)\geq y^*$, but $(x(T),y (T))\neq(x^*,y^*)$. 
Then,  define $m(x(T),y(T)):=1+\mu (T) r\left(\frac{x(T)}{K}+\alpha y(T)\right)$.  It follows that  
\begin{align*}
0 & \leq x^\sigma(T) - x^*\stackrel{\eqref{eq:xy_sigma}}{=}\frac{x(T)(1+r\mu(T))}{m(x(T),y(T))}-\frac{x^*(1+r\mu(T))}{m(x^*,y^*)}\\
&=(1+\mu(T)r)\frac{(x(T)-x^*)+\alpha r\mu(T) (x(T)y^*-x^*y(T))}{m(x(T),y(T))m(x^*,y^*)}.
\end{align*}
Since $m(x(T),y(T))>0$, this implies that $(x(T)-x^*)+\alpha r \mu(T) (x(T)y^*-x^*y(T))\geq 0$, or, equivalently, 
\begin{equation}\label{xbigxstar}
  x(T)y^*\geq x^*y(T) +\frac{x^*-x(T)}{\alpha r \mu(T)}.
\end{equation}
Similarly, define
$n(x(T),y(T)):=1+s\mu(T)\left(\frac{y(T)}{L}+\beta x(T)\right)$. Then, 
\begin{align*}
    0&\leq y^\sigma (T) - y^* \stackrel{\eqref{eq:xy_sigma}} {=}\frac{y(T)(1+s\mu(T))}{n(x(T),y(T))}-\frac{y^*(1+s\mu(T))}{n(x^*,y^*)}\\
    &=(1+s\mu (T))\frac{(y(T)-y^*)+\beta s \mu(T) (y(T)x^*-y^*x(T))}{n(x(T),y(T))n(x^*,y^*)}.
\end{align*}
Since $n(x(T),y(T))>0$, this implies $(y(T)-y^*)+\beta s\mu(T) (y(T)x^*-y^*x(T))\geq 0$, or equivalently, 
\begin{equation}\label{ybigystar}
x^*y(T)\geq y^*x(T)+\frac{y^*-y(T)}{\beta s \mu(T)}.
\end{equation}
Taking \eqref{xbigxstar} and \eqref{ybigystar} together yields
$$
x(T)y^*\geq  x^*y(T) +\frac{x^*-x(T)}{\alpha r \mu(T)}\geq y^*x(T)+\frac{y^*-y(T)}{\beta s \mu(T)}+ \frac{x^*-x(T)}{\alpha r \mu(T) }\geq x(T)y^*,
$$
and since  $(x (T),y(T))\in\mathcal{B}_0$, this results in a contradiction unless $x(T)=x^*$ and $y(T)=y^*$.

\end{proof}

\vspace{3mm}

\textbf{ (III) \, $\alpha L<1$ and $\beta K<1$}

\vspace{3mm}

\noindent In this case,  $k(x)<h(x)$ for all $x\in (0,x^*)$ and $k(x)>h(x)$ for all $x \in (x^*,\beta^{-1})$.  Recall that $h(x)>0$ for $x\in (0,K)$ but $h(x)<0$ for all $x>K$ and  $k(x)>0$ for $x\in (0,\beta^{-1})$, but $k(x)<0$, for all $x>\beta^{-1}$.

 \begin{lemma}\label{lem:signLhLkCase3}
     Consider \eqref{eq:CompT} with $\alpha L <1$ and $\beta K<1$.  Then, the following hold:
     \begin{itemize}
         \item[a)]  $\mathcal{L}_h(t,x,y)<0$ for all $(x,y)\neq (x^*,y^*)$,  $0<x\leq x^*$, $y^*\leq  y<h(x)$, and $t\in \T$. 
\item[b)] $\mathcal{L}_h(t,x,y)>0$ for all $(x,y)\neq (x^*,y^*)$,   $x\geq x^*$, $\max\{0,h(x)\}<y\leq y^*$
and $t\in \T$.   
\item[c)]   $\mathcal{L}_k(t,x,y)>0$ for all \, $0<x\leq x^*$, $y>k(x)$, and $t\in \T$.  
\item[d)]  $\mathcal{L}_k(t,x,y)<0$ for all  $x^*\leq x<\beta^{-1}$, $0<y<k(x)$, and $t\in \T$.  
    \end{itemize}
\end{lemma}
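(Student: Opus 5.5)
The plan is the one used for Lemma~\ref{lem:signLhLkCase1}. The denominators in \eqref{eq:Lh} and \eqref{eq:Lk} are positive, so the sign of $\mathcal{L}_h$ is the sign of $N_a=a_2\mu^2+a_1\mu+a_0$, and the sign of $\mathcal{L}_k$ is the sign of $N_b=b_2\mu^2+b_1\mu+b_0$. In each of the four regions I would show that the three coefficients weakly share the required sign and that the constant term is strictly signed. That gives the conclusion for every $\mu(t)\ge 0$, hence for every $t\in\T$.

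\emph{Constant terms.} Since $a_0=\alpha KL(y-h(x))$ and $b_0=KL(y-k(x))$, the constant terms are strictly signed directly from the defining inequalities: $y<h(x)$ in a), $y>h(x)$ in b), $y>k(x)$ in c), $y<k(x)$ in d).

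\emph{Quadratic terms.} Put $D=1-\alpha\beta KL>0$. Then $x^*=K(1-\alpha L)/D$ and $y^*=L(1-\beta K)/D$, which gives the key identity $y^*K(1-\alpha L)=x^*L(1-\beta K)$. This identity says exactly that the linear factor of $a_2=\alpha rs\,y\,(yK(\alpha L-1)+xL(1-\beta K))$ vanishes at $E^*$. For $x\le x^*$ and $y\ge y^*$ that factor is therefore $\le 0$, and for $x\ge x^*$ and $y\le y^*$ it is $\ge0$. This settles $a_2$ in a) and b). The same identity shows that $Lx(\beta K-1)+Ky(1-\alpha L)$, the factor in $b_2$, vanishes at $E^*$ and has the correct sign in c) and d). In c) one uses that $y>k(x)\ge k(x^*)=y^*$ for $x\le x^*$. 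In d) one uses that $y<k(x)\le y^*$ for $x\ge x^*$.

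\emph{Middle coefficients.} These are the main work. I would use convexity of $a_1$ and $b_1$ in each variable separately, from \eqref{eq:partials_ai} and \eqref{eq:partials_bi}, which puts the maximum on the boundary of the region. Take a) as the model case; the region is $\{0<x\le x^*,\ y^*\le y\le h(x)\}$.
\begin{itemize}
\item For fixed $x$, $a_1$ is maximised at $y=y^*$ or $y=h(x)$.
\item On the nullcline, \eqref{eq:sign_a1} gives $a_1(x,h(x))=s(K-x)\bigl(K(\alpha L-1)+x(1-\alpha\beta KL)\bigr)/(\alpha K)$. This is $\le 0$ for $x\le x^*$, because the bracket is increasing in $x$ and vanishes at $x=x^*$.
\item On $y=y^*$, $a_1$ is convex in $x$, so it suffices to check $x=x^*$, already covered since $y^*=h(x^*)$, and $x=0$. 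At $x=0$, $a_1(0,y)=r\alpha KLy(\alpha y-1)+sKy(\alpha L-1)\le0$ because $y\le 1/\alpha$ and $\alpha L<1$.
\end{itemize}
Part b) is analogous and will be the hardest, because the region $\{x\ge x^*,\ \max\{0,h(x)\}<y\le y^*\}$ is unbounded in $x$ and we now need a lower bound, where convexity does not directly give a minimum. There I would argue monotonicity instead. By \eqref{eq:Daiy}, $\partial_y a_1$ is linear and increasing in $y$, so I would evaluate it on the lower boundary $y=\max\{0,h(x)\}$ and show it is positive, as in the proof of Lemma~\ref{lem:signLhLkCase1}. That reduces b) to checking $a_1(x,h(x))\ge0$ for $x^*\le x\le K$ and $a_1(x,0)=\beta sLx(x-K)\ge0$ for $x\ge K$; the first holds by the same factorisation as before. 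If positivity of $\partial_y a_1$ fails somewhere in this region, the fallback is to compare $a_1$ along horizontal lines using \eqref{eq:Dai}.

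Parts c) and d) follow the same scheme for $b_1$, using \eqref{eq:Dbi}--\eqref{eq:Dbiy} and the restriction $b_1(x,k(x))=rx\beta L^2\bigl(K(1+\alpha L(\beta x-1))-x\bigr)$ computed in Lemma~\ref{lem:signLhLkCase1}. The bracket $K(1-\alpha L)-x(1-\alpha\beta KL)$ vanishes exactly at $x=x^*$, which gives $b_1(x,k(x))\ge0$ for $x\le x^*$ and $\le0$ for $x\ge x^*$. For the remaining boundary pieces:
\begin{itemize}
\item in d), $b_1(x,0)=L^2x\bigl(r(\beta K-1)+s\beta K(\beta x-1)\bigr)<0$ for $x<\beta^{-1}$, and convexity in $y$ handles the interior;
\item in c), the region is unbounded upward, and $\partial_y b_1$ is increasing in $y$, so it suffices to show $\partial_y b_1\ge0$ on $y=k(x)$ together with $b_1(x,k(x))\ge0$ for $0<x\le x^*$.
\end{itemize}
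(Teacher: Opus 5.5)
Your plan is correct and follows the paper's overall route. The sign of $N_a$ and $N_b$ is read off coefficientwise. The coefficients $a_2$ and $b_2$ are handled through their linear factor, which vanishes at $E^*$. Separate convexity handles the bounded regions, and monotonicity in $y$ from the lower boundary handles b).

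Your local choices differ a little and are, if anything, cleaner:
\begin{itemize}
\item In c) you use $\partial b_1/\partial y>0$ on and above $y=k(x)$. The paper instead uses $\partial b_1/\partial x>0$ and must treat $y\ge L$ separately at $x=0$.
\item In d) you use vertical slices only, so the paper's computation on the edge $x=x^*$ is unnecessary.
\item Your sign $a_2\ge 0$ in b) is the one actually needed. So is your argument that $b_1(x,k(x))\ge 0$ for $x\le x^*$ because the bracket vanishes at $x^*$. The paper's displayed inequalities at those two points run the wrong way.
\end{itemize}

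The two checks you left open both go through.
\begin{itemize}
\item In b), $\partial a_1/\partial y$ at $y=h(x)$ equals $r\alpha L(K-x)+s\bigl(x-K(1-\alpha L)\bigr)$. This is positive for $x^*\le x<K$ because $x^*=K(1-\alpha L)/(1-\alpha\beta KL)>K(1-\alpha L)$. That bound, not $\alpha L>1$ as in Lemma~\ref{lem:signLhLkCase1}, is what makes the step work, so no fallback is needed. For $x\ge K$ every term of $a_1$ is already nonnegative.
\item In c), $\partial b_1/\partial y$ at $y=k(x)$ equals $L\bigl(\alpha rKL+x(r(1-2\alpha\beta KL)+s\beta K)\bigr)$. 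This is linear in $x$ and positive at $x=0$ and at $x=\beta^{-1}$, where it equals $L(r(1-\alpha\beta KL)/\beta+sK)$. Hence it is positive on $[0,x^*]$.
\end{itemize}
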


 \begin{proof}  
%Let $\alpha L<1$ and $\beta K<1$.
{\it a)} Assume $0<x\leq x^*$, $y^*\leq  y<h(x)$, and $(x,y)\neq (x^*,y^*)$.  To show that  $\mathcal{L}_h(t,x,y)<0$, it suffices to show that  $a_i, i=0,1,2$, given in \eqref{eq:ai}, are nonpositive and not all zero. 

Since $y<h(x)$,  $a_0(x,y)<0$ and  
since $\alpha L<1$ and $\beta K<1$,  $a_2(x,y)<0$. By \eqref{eq:partials_ai}, for fixed $x$, $a_1(x,y)$ is convex in $y$ and for  fixed $y$, $a_1(x,y)$ is convex in $x$. Therefore,   the maximum of  $a_1(x,y)$ occurs  on the boundary of the  region under consideration.  Therefore, to show $a_1(x,y)\leq 0$, we need only consider points in the sets:

 \begin{align*}
    \Upsilon_1&:=\{(x,y) \, \colon \, y=y^*, \,  0\leq x\leq x^*\}, \\
 \Upsilon_2&:=\{(x,y)\, \colon \,0\leq x\leq x^*, \,  y=h(x)\}, \quad \mbox{and}\\
 \Upsilon_3&:=\{(x,y)\, \colon \, x=0, \, y^* \leq y \leq \alpha^{-1}. \}
 \end{align*}
For $(x,y)\in \Upsilon_1$, 
since $a_1(x,y^*)$ is convex in $x$, we have $a_1(x,y^*)\leq \max\{a_1(0,y^*), a_1(x^*,y^*)\}$. Since 
$a_1(x^*,y^*)=0$ and 
$$a_1(0,y^*)=K y^* (-s(1-\alpha L) -r \alpha L (1- \alpha y^*))<0$$
since $y^*<L$ and $\alpha L<1$. Thus, $a_1(x,y)\leq 0$ for $(x,y)\in \Upsilon_1$. 

%\end{align*}

For $(x,y)\in \Upsilon_2$, 
$$
a_1(x,h(x))=\frac{-s (K - x) (K - x + \alpha K L (\beta x-1))}{\alpha K}.$$
This is negative as long as $K-x+\alpha K L(\beta x-1)>0$. 
Since 
$$\frac{d}{dx}( K - x + \alpha K L (\beta x-1))=\alpha\beta K L-1<0$$
and substituting $x=x^*$, we have 
$K-x^*-\alpha K L(\beta\,  x^* -1)=0$. 
Thus, $a_1(x,h(x))\leq 0$ for $(x,y)\in \Upsilon_2$, with equality if and only if $(x,y)=(x^*,y^*)$.

\noindent
For $(x,y)\in \Upsilon_3$, 
$$ a_1(0,y)= y K(\alpha^2rLy+s(\alpha L-1)-\alpha r L)<0$$   
since $(\alpha^2rLy+s(\alpha L-1)-\alpha r L)$ is  an increasing function of $y$ 
  and substituting $y=\alpha^{-1}$,
  $$(\alpha^2rLy+s(\alpha L-1)-\alpha r L)= s(\alpha L-1)<0.$$
Thus, also for $(x,y)\in \Upsilon_3$, $a_1(x,y)\leq 0$, completing the claim for a).

{\it b)} Assume   $x\geq x^*$, $\max\{0,h(x)\}<y\leq y^*$, and $(x,y)\neq (x^*,y^*)$. To show that   $\mathcal{L}_h(t,x,y)>0$, it suffices to show that  $a_i(x,y), i=0,1,2$, given in \eqref{eq:ai}, are nonnegative and not all zero. Since $y>h(x)$,  $a_0(x,y)>0$. 
Since $yK (\alpha L-1)+xL(1-\beta K)$ is strictly decreasing in $y$ for $\alpha L<1$ and strictly increasing in $x$ for $\beta L<1$, 
$$yK (\alpha L-1)+xL(1-\beta K)\leq y^*K (\alpha L-1)+x^*L(1-\beta K)=0.$$ Thus, by the definition of $a_2(x,y)$ in \eqref{eq:ai}, $a_2(x,y)\leq 0$ for $x\geq x^*$ and $\max\{0,h(x)\}<y\leq y^*$ with equality only if $(x,y)=(x^*,y^*)$.

It is left to show that $a_1(x,y)\geq 0$. If $x\geq K$,  by \eqref{eq:ai},  $a_1(x,y)>0$.  
If $x^*\leq x<K$ and $y>\max\{0, h(x)\}$, we first show that $a_1(x,y)$ is  strictly increasing as a function of $y$, i.e., $\frac{\partial a_1(x,y)}{\partial y}>0$. We  note that, by \eqref{eq:Daiy}, $\frac{\partial a_1}{\partial y}$ is strictly  increasing in $y$, i.e., $\frac{\partial^2 a_1(x,y)}{\partial y^2}>0$. Since $y>\max\{0, h(x)\}=h(x)$ for $x<K$, 
\begin{align*}
\frac{\partial a_1(x,y)}{\partial y}\geq \frac{\partial a_1(x,y)}{\partial y}\Bigg|_{y=h(x)}&=
-K(r \alpha L+s(1-\alpha L))   +  (r \alpha L +s)x   +2r \alpha^2KL h(x), \\
% &=s (x-K) + \alpha L ((x-K) r + K s)+2r\alpha^2KL h(x) \\
%&= sx+Ks(\alpha L-1)+\alpha rL(K-x)
&= \alpha LK (r+s)-sK+x(s-r\alpha L).
\end{align*}
If $s\leq r \alpha L$, since $x<K$, we further simply 
$$\frac{\partial a_1(x,y)}{\partial y}\geq\frac{\partial a_1(x,y)}{\partial y}\Bigg|_{y=h(x)}\geq  \alpha L K(r+s)-sK+K(s-r\alpha L)=s\alpha KL>0.$$
If $s>r\alpha L$,  since $x\geq x^*$, we further simplify
\begin{align*}
     \frac{\partial a_1(x,y)}{\partial y}&\geq \frac{\partial a_1(x,y)}{\partial y}\Bigg|_{y=h(x)}\geq  \alpha L K (r+s)-sK+x^*(s-r\alpha L)\\
    & 
    = s\left( x^*-K(1- \alpha L )\right) +r\alpha L (K-x^*) \\
    & 
    = s\left( \frac{K(1-\alpha L)}{1-\alpha \beta K L}-K(1- \alpha L )\right) +r\alpha L (K-x^*) >0,
\end{align*}
because $x^*<K$ and $\alpha \beta K L <1$ so that $ \frac{K(1-\alpha L)}{1-\alpha \beta K L}-K(1- \alpha L )>0$.

Therefore, since $a_1(x,y)$ is strictly increasing in $y$ for fixed $x$, the  minimum occurs along the line $y=h(x)$ for $x^*\leq x< K$, where 
\begin{align*}
       a_1(x,h(x)) & = \frac{-s(K-x)(K(1-\alpha L)-x(1-\alpha  \beta K L))}{\alpha K}. 
\end{align*}
It is left to show that $K(1-\alpha L)-x(1-\alpha  \beta K L)\leq 0$ to conclude that $a_1(x,h(x))\geq 0$. Note that $K(1-\alpha L)-x(1-\alpha  \beta K L)$  is decreasing in $x$ since $\alpha L<1$ and $\beta K<1$. Hence,  
$$K(1-\alpha L)+x(-1+\alpha  \beta K L) \geq 
K(1-\alpha L)+x^*(-1+\alpha  \beta K L)=0,
$$
completing the claim for b).

{\it c)} Assume  $0<x\leq x^*$    and $y>k(x)$. To show that $\mathcal{L}_k(t,x,y)>0$, it suffices to show that $b_i(x,y)$,  $i\in \{1,2,3\}$, defined in \eqref{eq:partials_bi}, are all nonnegative and not all 0. Since $y>k(x)$, $b_0(x,y)>0$.

By \eqref{eq:Dbiy}, 
$\frac{\partial b_1(x,y)}{\partial x}$ is an increasing function of $y$  and since $y>k(x)$,  we have 
$$\frac{\partial b_1(x,y)}{\partial x}>\frac{\partial b_1(x,k(x))}{\partial x}=\beta L^2(s x \beta K+r(K-x))>0.
$$
Thus $b_1$ is increasing in $x$, so that the minimum of $b_1(x,y)$ occurs, i) for $y\geq L$ at $x=0$ and, ii) for $y<L$ the minimum occurs at $0\leq  x \leq x^*$ and $y= k(x)$.

\noindent
For i) $b_1(x,y)=b_1(0,y)=\alpha rKLy(y-L)\geq 0.$

\noindent
For ii), $0\leq x\leq x^*$, 
\begin{align*}
b_1(x,k(x))&= \beta rL^2 x(K(1-\alpha L) -x(1-\alpha \beta K L )) \\
&>   \beta r L^2 x(K(1-\alpha L) -x(1-\alpha L ))  \\
&= \beta r L^2 x(K-x)(1-\alpha L)>0.
\end{align*}
Therefore, $b_1(x,y)>0$ is the region considered in c).

It remains to show that $b_2(x,y)\geq 0$, in this region. From \eqref{eq:bi}, since $\alpha L<1$, $b_2(x,y)$ is increasing in $y$
and, hence,
\begin{align*}
    b_2(x,y)&\geq b_2(x,k(x))= rs\beta L x(Lx(\beta K-1)+k(x)K(1-\alpha L)\\
    &=\beta L^2 r s x\left\{x(\alpha\beta K L-1)+K(1-\alpha L)\right\}\\
    &\geq \beta L^2 r s x\left\{x^*(\alpha\beta K L-1)+K(1-\alpha L)\right\}=0,
\end{align*}
completing the proof of c).

{\it d)} Assume that  $x^*\leq x \leq \beta^{-1}$ and $0<y<k(x)$. To show that $\mathcal{L}_k(t,x,y)<0$, it suffices to show that $b_i(x,y)$, given in \eqref{eq:bi}, for $i\in \{0,1,2\}$ are nonpostive and not all zero. Since $y<k(x)$, $b_0(x,y)<0$. 

Next we show that $b_1(x,y) \leq 0$ for  $x^*\leq x\leq \beta^{-1}$ and $0<y<k(x)$. 
By \eqref{eq:partials_bi}, for fixed $x$,
$b_1(x,y)$ is a convex function of $y$  and for fixed $y$, $b_1(x,y)$ is a convex function of $x$. Therefore,   the maximum of $b_1(x,y)$  occurs  on the boundary of the region considered in this case, i.e., 
\begin{align*}
    \Upsilon_4&:=\{(x,y)\, : \,    x^*\leq x\leq \beta^{-1} , \, y=0 \},\\
    \Upsilon_5&:=\{(x,y)\, : \,    x^*\leq x\leq \beta^{-1} , \, y=k(x)\},\\
    \Upsilon_6&:=\{(x,y)\, : \,    x=x*, \, 0\leq y \leq y^*\}.
\end{align*}

\noindent
For $(x,y)\in \Upsilon_4$, $b_1(x,y)=b_1(x,0)=xL^2(r(\beta K-1)+s\beta K(\beta x-1))<0$.

% For $(x,y)\in \Omega_1$, 
% $$b_1(x,0)=L^2 x ((\beta K-1) r + \beta K s (\beta x-1))<0$$ since $\beta K<1$ and $x<\beta^{-1}$. 

\noindent
For $(x,y)\in \Upsilon_5$, $y=k(x)$ and 
\begin{align*} b_1(x,k(x))&=\beta L^2 r x (K (1-\alpha L)- x (1 - \alpha \beta  K L ))\\
&\leq \beta L^2 r x (K (1-\alpha L)- x^* (1 - \alpha \beta  K L ))=0.
\end{align*}

\noindent
For $(x,y)\in \Upsilon_6$, $x=x^*$ and
\begin{align*}
b_1(x^*,y)&=\frac{-K L [\beta s K (1 - \alpha L)  +r (1 - \alpha \beta K L) (1-\alpha L +\alpha y )] (L (1-\beta K)- y(1-\alpha\beta K L ))}{(1 - \alpha \beta K L)^2}.
 \end{align*}

Our assumptions imply that the  factor in the square brackets is positive. Furthermore, the  last factor in the numerator, in round brackets, is also positive because $y\leq y^*(<L)$ and, therefore, 
 \begin{equation}\label{eq:sign}
L (1-\beta K)- y(1-\alpha\beta K L )> L (1-\beta K)- y(1-\beta K  )=(L-y)(1-\beta K)>0.
\end{equation}
 Therefore $b_1(x^*,y)<0$, and hence
 $b_1(x,y)\leq 0$ in the region under consideration in d).

Lastly, $b_2(x,y)$, defined in \eqref{eq:bi} is an increasing function of $y$.  
Thus, 
\begin{align*}
b_2(x,y) &\leq  r s \beta Lx(Lx^*(\beta K-1)+ Ky (1-\alpha L)))\\
&=r s \beta Lx\left(\frac{ L(1-K\beta)-y(1- \alpha \beta K L ) }{\alpha \beta K L -1} \right)\stackrel{\eqref{eq:sign}}{<}0.
\end{align*}

This completes the  proof of part d) and hence the entire Lemma.
\end{proof}

A generic image of the nullclines and the respective regions  in Lemma~\ref{lem:signLhLkCase3} is provided in Fig.~\ref{fig:regions2}, where regions $\mathcal{S}_i, \, i=1,2,\dots,6$ and $\mathcal{B}_i, i=0,1$ are shown.  These regions are also crucial for the proof of global dynamics as formulated in Theorem~\ref{thm:Estar_GAS}. Regions $\mathcal{B}_i, i=0,1$  are defined in  Proposition~\ref{Prop:boxes} and  
    \begin {align*}
\mathcal{S}_1 &:=\{(x,y)\, : \, 0<x<x^*, \, \,  y^*<y<k(x) \},\\   
\mathcal{S}_2 &:=\{(x,y)\, : \, 0<x<x^*, \, \,  k(x)<y<h(x)  \},\\
\mathcal{S}_3 &:=\{(x,y)\, : \, 0<x<x^*, \, \,  y>h(x) \},\\
\mathcal{S}_4 &:=\{(x,y)\, : \, x^*<x<K, \, \,  0<y<h(x)\},\\
\mathcal{S}_5 &:=\{(x,y)\, : \, x^*<x<\frac{1}{\beta}, \, \,  \max\{0,h(x)\} <y<k(x)\},\\
\mathcal{S}_6 &:=\{(x,y)\, : \, x^*<x, \, \,  \max\{0,k(x)\}<y<y^*\}\},\\
\mathcal{S}_{i_T} &:=\{(x,y)\in \mathcal{\overline{S}}_{i} \,: \, x>0, \,  y>0, \, (x,y)\neq (x^*,y^*)\}, \ i=1,2,\dots,6,
    \end{align*}
where $\overline{S}_i$ is the closure of $S_i$ for $i\in \{1, \ldots, 6\}$.

\begin{figure}[h!]
    \centering
   \includegraphics[ width=0.65\linewidth]{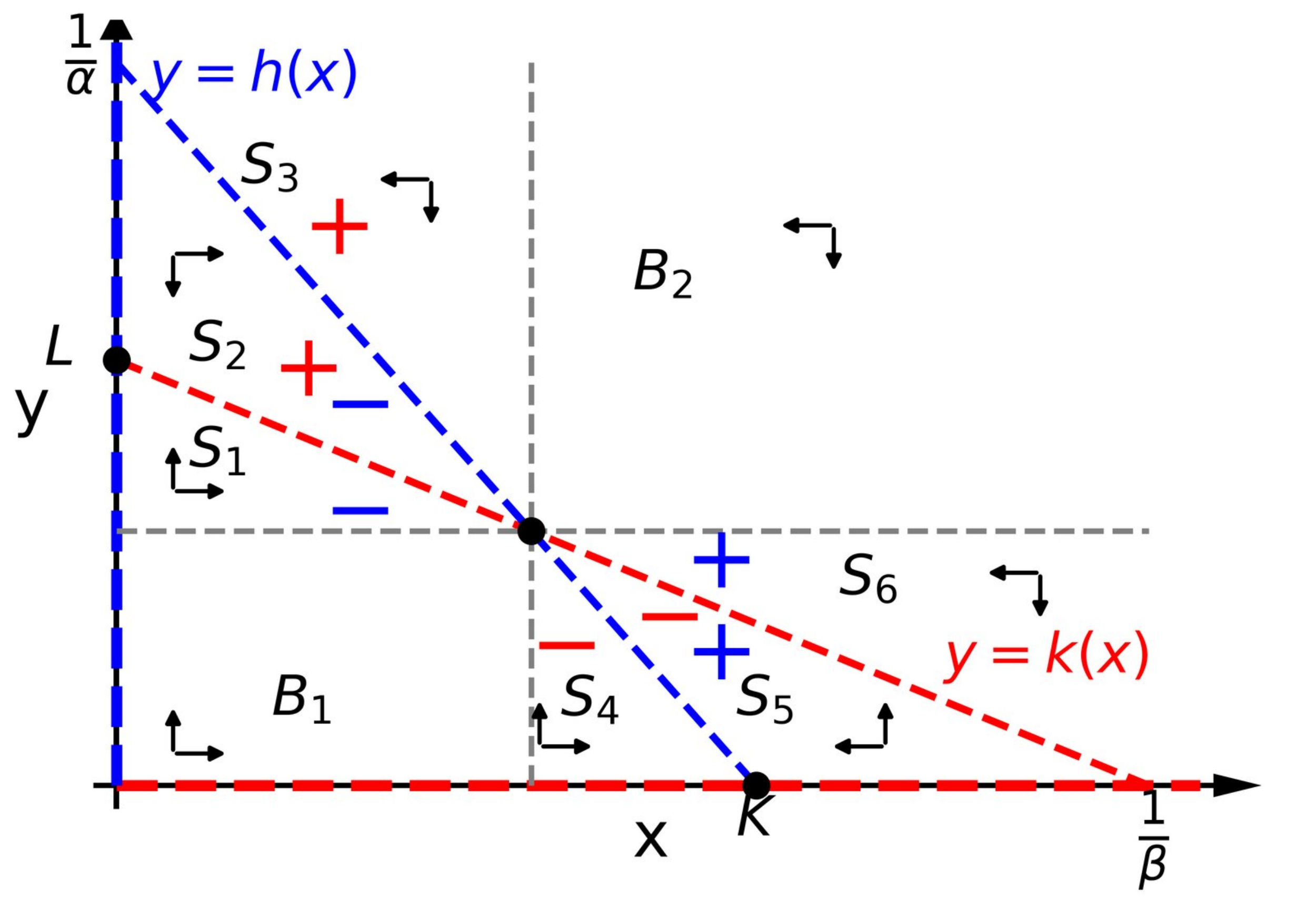}
    \caption{Regions considered with generic nullclines $y=h(x)$ and $y=k(x)$, in the case where $\alpha L<1$ and $\beta K<1$. The colored signs relate to the sign of the corresponding Root-operators $\mathcal{L}_h$ (in blue) and $\mathcal{L}_k$ (in orange). The signs are based on the results in Lemma~\ref{lem:signLhLkCase3}.
  The arrows  represent  the signs of $\Delta x$ (horizontal) and $\Delta y$ (vertical). Each region exhibits component-wise monotonicity.  }
    \label{fig:regions2}
\end{figure}

\begin{lemma}\label{lem:invarianceCase1new}
Consider \eqref{eq:CompT}  with $\alpha L <1$ and $\beta K<1$. The region $\mathcal{S}_{2_T}$ is positively invariant. The region 
$\mathcal{S}_{5_T}$
is positively invariant.  
\end{lemma}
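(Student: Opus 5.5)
We treat $\mathcal{S}_{2_T}$ in detail; the argument for $\mathcal{S}_{5_T}$ is symmetric. Note first that $\mathcal{S}_{2_T}$ is the closed curvilinear triangle with vertices near $(0,L)$, $(0,\frac1\alpha)$ and $E^*$ (excluding the axis and $E^*$). It lies in $\{0<x\le x^*\}$ between $y=k(x)$ from below and $y=h(x)$ from above. On it $x^\Delta\ge 0$ and $y^\Delta\le 0$, by \eqref{eq:xy_Delta} and the signs of $1-\frac{x}{K}-\alpha y$ and $1-\frac{y}{L}-\beta x$. Since $\mathcal{S}_{2_T}\subset\{x\le x^*,\ y\ge y^*\}$, we must show three things for $(x(T),y(T))\in\mathcal{S}_{2_T}$: (i) $y^\sigma\le h(x^\sigma)$, (ii) $y^\sigma\ge k(x^\sigma)$, and (iii) $x^\sigma\le x^*$ (equivalently $y^\sigma\ge y^*$), with $(x^\sigma,y^\sigma)\neq E^*$ and positivity. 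Positivity comes from Proposition~\ref{prop:nonneg}. At right-dense $T$ the point does not move instantaneously, and the argument is the classical local one combined with Theorem~\ref{thm:need}. So the real content is the right-scattered step, and we use the Root-operators exactly as in the proof of Theorem~\ref{thm:y_only_gas}.

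\emph{Step 1 (the two nullcline sides).} For (ii), $x(T)\le x^*$ and $y(T)\ge k(x(T))$. In the interior, Lemma~\ref{lem:signLhLkCase3}c) gives $\mathcal{L}_k>0$, hence $y^\sigma>k(x^\sigma)$. On the boundary piece $y=k(x)$ we compute directly, as in part d) of the proof of Theorem~\ref{thm:y_only_gas}. There $y^\sigma=y=k(x)$ and $x^\sigma\ge x$, and $k$ is decreasing, so $\mathcal{L}_k=k(x)-k(x^\sigma)\ge 0$. For (i), Lemma~\ref{lem:signLhLkCase3}a) gives $\mathcal{L}_h<0$ on $0<x\le x^*$, $y^*\le y<h(x)$, $(x,y)\ne E^*$, which covers the interior and the segment $x=x^*$. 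On the piece $y=h(x)$ we have $x^\sigma=x$ and $y^\sigma\le y$, hence $\mathcal{L}_h=y^\sigma-h(x)\le0$. The boundary points on the $y$-axis are excluded from $\mathcal{S}_{2_T}$, so they need no treatment.

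\emph{Step 2 (staying left of $x^*$).} Here we use Proposition~\ref{Prop:boxes}, or rather a variant of its algebra. Suppose $x^\sigma>x^*$ while (i) and (ii) hold. The region between $k$ and $h$ with $x>x^*$ and $y>0$ lies in $\{y\le y^*\}$, so $(x^\sigma,y^\sigma)$ would fall in the box $\{x\ge x^*, y\le y^*\}$. Meanwhile $(x,y)$ lies in $\{x\le x^*, y\ge y^*\}$. We then redo the computation in the proof of Proposition~\ref{Prop:boxes} with the inequalities reversed. The identity
\[x^\sigma-x^*=(1+r\mu)\frac{(x-x^*)+\alpha r\mu(xy^*-x^*y)}{m(x,y)m(x^*,y^*)}\]
shows that $x^\sigma-x^*\le 0$ whenever $x\le x^*$ and $y\ge y^*$, because both terms in the numerator are nonpositive. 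Equality holds only at $E^*$. The analogous identity for $y$ gives $y^\sigma\ge y^*$. This step therefore resolves (iii) directly, and it also shows that $E^*$ is not reached.

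\emph{Step 3.} For a right-dense $T$, combine Theorem~\ref{thm:need}, applied to $\mathcal{L}_h$ and $\mathcal{L}_k$ (equal to $y-h(x)$ and $y-k(x)$ there), with the monotonicity $x^\Delta\ge0$ and $y^\Delta\le0$. The solution must leave through one of the three boundary pieces. Leaving through $x=x^*$ or $y=y^*$ is prevented by Step 2 in the continuous limit, since $x\le x^*$ and $y\ge y^*$ give $x^\Delta\le$ the value at $E^*$, and uniqueness at $E^*$ rules out reaching it. Leaving through $h$ or $k$ is prevented by the boundary computations of Step 1 at $\mu=0$. Finally, repeat everything for $\mathcal{S}_{5_T}$ using Lemma~\ref{lem:signLhLkCase3}b),d) and the reversed box $\{x\ge x^*, y\le y^*\}$.

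I expect the main obstacle to be the boundary points with $\mu(T)=0$, where the Root-operator vanishes and Theorem~\ref{thm:need} gives no information. There I would fall back on a first-exit-time argument, using continuity and the strict direction of the vector field relative to the nullclines.
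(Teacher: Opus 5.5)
Your Step~1 is exactly the paper's proof: Lemma~\ref{lem:signLhLkCase3}a),c) off the nullclines, plus the direct computations $\mathcal{L}_h=y^\sigma-h(x)\le 0$ on $y=h(x)$ and $\mathcal{L}_k=k(x)-k(x^\sigma)\ge 0$ on $y=k(x)$. For the \emph{other} inequality on each piece, note that a) also covers the $k$-piece and c) covers the $h$-piece; $\mathcal{S}_{2_T}$ has no segment on $x=x^*$. Step~2 is correct, since your identity does show that $\{x\le x^*,\ y\ge y^*\}$ is mapped into itself, but you do not need it. The function $h-k$ is affine with slope $(\alpha\beta KL-1)/(\alpha K)<0$ and vanishes only at $x^*$, so (i) and (ii) already force $x^\sigma\le x^*$. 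At a right-scattered $T$ the inequality $y^\sigma<h(x^\sigma)$ is strict on all of $\mathcal{S}_{2_T}$, so $E^*$ is never hit; this is how the paper concludes. Your lead-in (``the region between $k$ and $h$ with $x>x^*$\dots'') is also off: for $x^\sigma>x^*$, conditions (i) and (ii) are simply incompatible, because $k>h$ there.

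The weak point is Step~3.
\begin{itemize}
\item The assertion that $x\le x^*$, $y\ge y^*$ gives $x^\Delta\le x^\Delta(E^*)=0$ is false, since $x^\Delta>0$ on $\mathcal{S}_2$. Likewise, ``Step~2 in the continuous limit'' is vacuous: at $\mu=0$ the identity just reads $x^\sigma-x^*=x-x^*$.
\item You do not need that claim, because $\overline{\mathcal{S}}_2$ meets $x=x^*$ only at $E^*$. The only exits are the two nullcline pieces.
\item On those pieces, as you concede at the end, Step~1 at $\mu(T)=0$ gives $\mathcal{L}_h=\mathcal{L}_k=0$, and Theorem~\ref{thm:need} says nothing. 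The paper's boundary computations assert strict inequalities that likewise fail when $\mu(t_0)=0$.
\end{itemize}

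The fix is short, because $h$ and $k$ are affine.
\begin{itemize}
\item On the $h$-piece, $g=y-h(x)$ satisfies $g^\Delta(T)=y^\Delta(T)+x^\Delta(T)/(\alpha K)=y^\Delta(T)<0$.
\item On the $k$-piece, $g=y-k(x)$ satisfies $g^\Delta(T)=L\beta\,x^\Delta(T)>0$.
\item At a right-dense $T$ with $g(T)=0$, \cite[Theorem~1.16]{Bohner1} then gives the required strict sign of $g$ on $(T,T+\epsilon)_\T$. The other inequality is strict at $T$ and persists by Theorem~\ref{thm:need}.
\end{itemize}
To pass from this local statement to positive invariance, use the time-scale induction principle. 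At a left-dense time the limit point lies in $\mathcal{S}_{2_T}$: the axis is excluded by Proposition~\ref{prop:nonneg}, and $E^*$ cannot be reached in finite time. The same repairs apply verbatim to $\mathcal{S}_{5_T}$.
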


\begin{proof}
    
    If $(x(t_0),y(t_0))\in \mathcal{S}_{2}$, then by Lemma~\ref{lem:signLhLkCase3}a) and c), $\mathcal{L}_h(t_0,x(t_0),y(t_0))<0$ and $\mathcal{L}_k(t_0,x(t_0),y(t_0))>0$. By Theorem~\ref{thm:need}, there exits $\epsilon>0$ such that $y(t)<h(x(t))$ and $y(t)>k(x(t))$ for $t\in (t_0, t_0+\epsilon]_\T$ with $t_0+\epsilon\geq \sigma(t_0)$. That is, $(x(t),y(t))\in \mathcal{S}_2$ for $t\in [t_0, t_0+\epsilon]_\T$. To complete the claim that $\mathcal{S}_{2_T}$ is positively invariant, we now consider the boundaries of this region within the positive quadrant. 
    If there exists $t_0\in \T$ such that $x (t_0)\in (0,x^*)$ and $y(t_0)=h(x(t_0))$. Since $h(x)>k(x)$ for $x\in (0,x^*)$, $G(x,h(x))<h(x)$ and since $F(x,h(x))=x$, so that       
    \begin{align*}
    \mathcal{L}_h(t_0,x (t_0),y(t_0))&=G(x (t_0),h(x(t_0)))-h(F(x(t_0),h(x(t_0))))\\
    &=G(x(t_0),h(x(t_0)))-h(x(t_0))
    <h(x(t_0))-h(x(t_0))=0.
    \end{align*}
Thus, by Theorem~\ref{thm:need}, there exists $\epsilon>0$ such that $y(t)<h(x(t))$ for $t\in (t_0,t_0+\epsilon]_\T$, where $t_0+\epsilon\geq \sigma(t_0)$. 
By Lemma~\ref{lem:signLhLkCase3}c), $\mathcal{L}_k(t,x,y)>0$ for $y>k(x)$ so that, by Theorem~\ref{thm:need}, there exists $\delta>0$ such that $y(t)>k(x(t))$ for $t\in (t_0,t_0+\delta]_\T$ with $t_0+\delta\geq \sigma(t_0)$. Thus, choosing $\overline{\epsilon}=\min\{\epsilon, \delta\}$, $(x(t),y(t))\in \mathcal{S}_{2}$ for $t\in (t_0,t_0+\overline{\epsilon}]_\T$. 

 If there exists $t_0\in \T$ such that $x(t_0)\in (0,x^*)$ and $y(t_0)=k(x(t_0))$. Since $h(x)>k(x)$ for $x\in (0,x^*)$, $F(x(t_0),k(x(t_0)))>x(t_0)$ and $G(x(t_0),k(x(t_0)))=k(x(t_0))$. Furthermore, since $y=k(x)$ is decreasing, $k(F(x(t_0),k(x(t_0)))<k(x(t_0))$. We therefore have   
 \begin{align*}
 \mathcal{L}_k(t_0,x t_0),k(x(t_0)))&=G(x (t_0),k(x(t_0)))-k(F(x(t_0),k(x(t_0))))\\
 &>k(x (t_0))-k(x(t_0))=0.
 \end{align*}
Thus, by Theorem~\ref{thm:need}, there exists $\epsilon>0$ such that $y(t)>k(x(t))$ for $t\in (t_0,t_0+\epsilon]_\T$, where $t_0+\epsilon\geq \sigma(t_0)$. 
By Lemma~\ref{lem:signLhLkCase3}a), $\mathcal{L}_h(t,x,y)>0$ for $y=k(x)<h(x)$ so that, by Theorem~\ref{thm:need}, there exists $\delta>0$ such that $y(t)<h(x(t))$ for $t\in (t_0,t_0+\delta]_\T$ with $t_0+\delta\geq \sigma(t_0)$. Thus, choosing $\overline{\epsilon}=\min\{\epsilon, \delta\}$, $(x(t),y(t))\in \mathcal{S}_{2}$ for $t\in (t_0,t_0+\overline{\epsilon}]_\T$, confirming the claim that $\mathcal{S}_{2T}$ is positively invariant.

A similar argument can be made to prove the positive invariance of  $\mathcal{S}_{5_T}$. 

\end{proof}

\begin{theorem}\label{thm:Estar_GAS}
     Consider \eqref{eq:CompT} and assume that 
      $\alpha L <1$ and $\beta K<1$. Then,  there exists a unique interior equilibrium  $E^*=(x^*,y^*)$,  where $x^*,y^*$ are given in \eqref{eq:equilbria}. $E^*$ is  asymptotically stable and attracts all solutions with positive initial conditions. %Both $E^*_K=(K,0)$ and $E^*_L=(0,L)$ repell all solutions with positive initial conditions,  and $E_0^*$ is a repellor.
\end{theorem}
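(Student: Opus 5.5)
The plan mirrors the proof of Theorem~\ref{thm:y_only_gas}, now with the partition of the open quadrant into $\mathcal{S}_1,\dots,\mathcal{S}_6$ together with the boxes $\mathcal{B}_0,\mathcal{B}_1$ shown in Fig.~\ref{fig:regions2}. Existence and uniqueness of $E^*$ follow from \eqref{eq:equilbria}, since $(\alpha L-1)(\beta K-1)>0$. By Lemma~\ref{lem:invarianceCase1new}, $\mathcal{S}_{2_T}$ and $\mathcal{S}_{5_T}$ are positively invariant. In $\mathcal{S}_2$ we have $x^\Delta>0$ and $y^\Delta<0$, and in $\mathcal{S}_5$ we have $x^\Delta<0$ and $y^\Delta>0$, so each component is monotone there. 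Proposition~\ref{prop:convergence} then gives convergence to an equilibrium in the closure. In $\overline{\mathcal{S}}_2$ the candidates are $E^*$ and $E_L^*$. Because $x$ is increasing from a positive value and $x^*>0$, $E_L^*$ (which has $x=0$) is excluded, so the limit is $E^*$. Symmetrically, in $\mathcal{S}_5$ the decreasing $y$-component is bounded away from $0$, which excludes $E_K^*$. It then remains to show that every positive solution eventually enters $\mathcal{S}_{2_T}\cup\mathcal{S}_{5_T}$ or converges to $E^*$ directly.

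The remaining regions are $\mathcal{S}_1$, $\mathcal{S}_3$, $\mathcal{S}_4$, $\mathcal{S}_6$, and the parts of $\mathcal{B}_0$ and $\mathcal{B}_1$ where both derivatives have one sign. On these, $x^\Delta,y^\Delta>0$ in $\mathcal{B}_0$ (below both nullclines) and $x^\Delta,y^\Delta<0$ in $\mathcal{B}_1$ (above both nullclines).

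For a solution in $\mathcal{B}_0$ away from $E^*$, componentwise monotonicity plus Proposition~\ref{prop:convergence} shows that it cannot remain in $\overline{\mathcal{B}}_0$ forever. The equilibria there, $E_0^*$, $E_K^*$ and $E_L^*$, cannot be approached while both components increase from positive values, unless the limit is $E^*$, which is fine. So there is a first exit time $T$. Proposition~\ref{Prop:boxes} guarantees that the exit point $(x^\sigma,y^\sigma)$ is not in $\mathcal{B}_1$. The sign information of Lemma~\ref{lem:signLhLkCase3} then places it in $\mathcal{S}_2$, $\mathcal{S}_5$, or in a region adjacent to them. Concretely, exiting with $x^\sigma>x^*$ and $y^\sigma<y^*$ lands in $\mathcal{S}_4\cup\mathcal{S}_5\cup\mathcal{S}_6$, while exiting with $x^\sigma<x^*$ and $y^\sigma>y^*$ lands in $\mathcal{S}_1\cup\mathcal{S}_2\cup\mathcal{S}_3$.

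The symmetric argument applies to $\mathcal{B}_1$, using a version of Proposition~\ref{Prop:boxes} with the roles of $\mathcal{B}_0$ and $\mathcal{B}_1$ exchanged, which is proved by the same algebra.

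For a strip such as $\mathcal{S}_3$ (with $x<x^*$, $y>h(x)$, $x^\Delta<0$, $y^\Delta<0$), we would use Lemma~\ref{lem:signLhLkCase3}c), which gives $\mathcal{L}_k>0$, to show that the next point stays above $k$. The case distinction is then as follows.
\begin{itemize}
\item If the solution stays in $\overline{\mathcal{S}}_3$, monotonicity forces convergence to an equilibrium. Only $E^*$ or $E_L^*$ is possible, and $E_L^*$ lies outside the closure, since $L>1/\alpha=h(0)$ puts it below $h$ in this case.
\item Otherwise the solution crosses $h$, landing in $\mathcal{S}_2$, or crosses $y=y^*$ into $\mathcal{B}_0$ or $\mathcal{S}_2$.
\end{itemize}
The strips $\mathcal{S}_1$, $\mathcal{S}_4$ and $\mathcal{S}_6$ are handled analogously with parts a), b) and d) of Lemma~\ref{lem:signLhLkCase3}.

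The main obstacle is ruling out infinite cycling between $\mathcal{B}_0$, $\mathcal{B}_1$ and the outer strips for large graininess, since jumps can skip regions. I would first show that the strips $\mathcal{S}_1,\mathcal{S}_3$ can only feed into $\mathcal{S}_2\cup\mathcal{B}_0\cup\{E^*\}$. This uses the Root-operator signs, which prevent crossing $k$ from the left side and prevent reaching $\mathcal{B}_1$ from above-left. Symmetrically, the strips $\mathcal{S}_4,\mathcal{S}_6$ feed only into $\mathcal{S}_5\cup\mathcal{B}_1$ or $\mathcal{B}_0$. Combined with Proposition~\ref{Prop:boxes}, which forbids direct jumps $\mathcal{B}_0\to\mathcal{B}_1$, every orbit reaches an invariant set in finitely many region transitions. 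If that finiteness argument fails, the fallback is to note that within $\mathcal{B}_0\cup\mathcal{B}_1$ the orbit is monotone in both components between visits, and to compare successive visits to get a monotone subsequence converging to $E^*$.

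Local asymptotic stability follows because $E^*$ lies in the closure of both invariant wedges. Near $E^*$, orbits either enter $\mathcal{S}_{2_T}\cup\mathcal{S}_{5_T}$, where they move monotonically toward $E^*$, or stay in a box, where they also move monotonically toward $E^*$. Hence orbits starting close to $E^*$ stay close to it.
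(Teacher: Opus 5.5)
Your decomposition (boxes $\mathcal{B}_0,\mathcal{B}_1$, strips $\mathcal{S}_1,\mathcal{S}_3,\mathcal{S}_4,\mathcal{S}_6$, invariant wedges $\mathcal{S}_{2_T},\mathcal{S}_{5_T}$) and your handling of the wedges and boxes match the paper's proof. However, the step you flag as the main obstacle is left open, and as written it fails. You only claim that $\mathcal{S}_1,\mathcal{S}_3$ feed into $\mathcal{S}_2\cup\mathcal{B}_0$ and that $\mathcal{S}_4,\mathcal{S}_6$ feed into $\mathcal{S}_5\cup\mathcal{B}_0\cup\mathcal{B}_1$. With those transitions, Proposition~\ref{Prop:boxes} rules out only direct box-to-box jumps; your reversed version with $\mathcal{B}_0,\mathcal{B}_1$ exchanged is indeed correct by the same algebra, but it too only forbids direct jumps. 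So an itinerary such as $\mathcal{B}_0\to\mathcal{S}_3\to\mathcal{B}_0\to\mathcal{S}_3\to\cdots$ or $\mathcal{B}_1\to\mathcal{S}_6\to\mathcal{B}_1\to\cdots$ is not excluded, and ``finitely many region transitions'' does not follow. The fallback does not repair this. Nothing orders successive entry points into a box, and a subsequence converging to $E^*$ would not give convergence of the orbit.

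The missing idea is that a strip never feeds into a box. In each strip, componentwise monotonicity together with the one applicable part of Lemma~\ref{lem:signLhLkCase3} confines the next point to the strip or its adjacent wedge:
\begin{itemize}
\item In $\mathcal{S}_1$, $y^\sigma>y>y^*$, and part a) gives $y^\sigma<h(x^\sigma)$. Since $h\le y^*$ on $[x^*,\infty)$, this forces $x^\sigma<x^*$.
\item In $\mathcal{S}_3$, $x^\sigma<x<x^*$, and part c) gives $y^\sigma>k(x^\sigma)>y^*$.
\item In $\mathcal{S}_4$, $x^\sigma>x>x^*$, and part d) gives $y^\sigma<k(x^\sigma)<y^*$.
\item In $\mathcal{S}_6$, $y^\sigma<y<y^*$, and part b) gives $y^\sigma>h(x^\sigma)$, which forces $x^\sigma>x^*$.
\end{itemize}
On right-dense stretches the same monotonicity prevents reaching the lines $x=x^*$ or $y=y^*$ except at $E^*$. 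In each strip the monotone directions also make every equilibrium in its closure unreachable; for example, in $\mathcal{S}_1$, $y$ increases from a value above $y^*$. Hence $\mathcal{S}_1,\mathcal{S}_3$ must exit into $\mathcal{S}_{2_T}$, and $\mathcal{S}_4,\mathcal{S}_6$ must exit into $\mathcal{S}_{5_T}$. In particular, your case ``$\mathcal{S}_3$ crosses $y=y^*$ into $\mathcal{B}_0$'' cannot occur, and allowing it is exactly what opens the cycle. This is what the paper's steps iii)--viii) assert, and it bounds every itinerary by a finite chain: box, then strip, then wedge.

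Smaller points:
\begin{itemize}
\item In case (III) one has $L<1/\alpha$, not $L>1/\alpha$.
\item In $\mathcal{S}_5$ the $y$-component is increasing, not decreasing.
\item Your stability paragraph ignores the strips. It also needs a bound on single jumps near $E^*$ that is uniform in $\mu$. Such a bound is available because $(x,y,\mu)\mapsto(x^\sigma,y^\sigma)$ extends continuously to $\mu=\infty$ and fixes $E^*$.
\end{itemize}
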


\begin{proof} 
   
   We  show that any solution starting in the positive quadrant either enters $\overline{\mathcal{S}}_{2}$  and  converges to $E^*$, or enters $\overline{\mathcal{S}}_5$ and converges to $E^*$, unless it remains in $\mathcal{B}_0$ or $\mathcal{B}_1$ and converges to $E^*$. 
{\it i)} Assume that  $(x(t_0),y(t_0))  \in  \mathcal{B}_0$. Either $(x(t),y(t))\in  \mathcal{B}_0$ for all $t\in [t_0,\infty)_\T$ or there exists $T>t_0$ such that $(x(T),y(T))\in (0,\infty)^2\backslash \mathcal{B}_0$. Note that in the former case,  by the component-wise monotonicity of $\Delta x, \Delta y$ and Proposition~\ref{prop:convergence}, $(x,y)$ must converge to an equilibrium in 
$\mathcal{B}_0$.  Since the only reachable equilibrium is  $E^*$, if the solution remains in 
$\mathcal{B}_0$, it must converge to $E^*$. 
In the latter case,  by  Proposition~\ref{Prop:boxes}, there exists $T>t_0$ such that
$(x(T),y(T))\in (0,\infty)^2\backslash \{\mathcal{B}_0 \cup \mathcal{B}_1\}$.

{\it ii)}  Arguing as in {\it i)}, if $(x(t_0),y(t_0))\in \mathcal{B}_1$, then either $(x(t),y(t))\in  \mathcal{B}_1$ for all $t\in [t_0,\infty)_\T$, in which case it must converge to $E^*$, or there exists $T>t_0$ such that $(x(T),y(T))\in  \bigcup_{i=1}^6 \mathcal{S}_{i_T} \cup \mathcal{B}_0$, so that  again, it suffices to know what happens to solutions with
$(x(t_0),y(t_))\in (0,\infty)^2\backslash \{\mathcal{B}_0 \cup \mathcal{B}_1\}=\mathcal{S}_1\cup \mathcal{S}_{2_T}\cup \mathcal{S}_3\cup \mathcal{S}_4\cup \mathcal{S}_{5_T}\cup \mathcal{S}_6$.

Thus, it suffices to show that for $(x(t_0),y(t_0))\in \mathcal{S}_1\cup \mathcal{S}_{2_T}\cup \mathcal{S}_3\cup \mathcal{S}_4\cup \mathcal{S}_{5_T}\cup \mathcal{S}_6$, the solution converges to $E^*$.

{\it iii)} Assume that  $(x(t_0),y(t_0))\in \mathcal{S}_{1}$. The  solution cannot remain in this region, since  based on the component-wise monotonicity,  the solution would have to converge to an equilibrium in $\mathcal{S}_{1_T}$ and none of the equilibria are reachable. Since  $\mathcal{L}_k(t,x,y)>0$ in this region  (see Lemma~\ref{lem:signLhLkCase3}c), by Theorem~\ref{thm:need},
there exist $T>t_0$ such that $(x(T),y(T))\in \mathcal{S}_{2_T}$. This region is by Lemma~\ref{lem:invarianceCase1new} positively invariant.

{\it iv)} Assume that  $(x(t_0),y(t_0))\in \mathcal{S}_{2_T}$. By Lemma~\ref{lem:invarianceCase1new}, the solution remains in $\mathcal{S}_{2_T}$.  
By Proposition~\ref{prop:convergence}, solutions must converge in $\mathcal{S}_{2_T}$ to an equilibrium and the only reachable equilibrium  is $E^*$. 

{\it v)} Assume that  $(x(t_0),y(t_0))\in \mathcal{S}_{3}$. Based on the component-wise monotonicity in $\mathcal{S}_{3}$ and by Proposition~\ref{prop:convergence},  the solution either converges to $E^*$
or since $\mathcal{L}_h(t,x,y)<0$ by Lemma~\ref{lem:signLhLkCase3}a), 
 must enter $\mathcal{S}_{2_T}$ and case {\it iv)} applies. 

{\it vi)} Assume that  $(x(t_0),y(t_0))\in \mathcal{S}_{4}$. 
The  solution cannot remain in this region, because the component-wise monotonicity  implies that the solution must converge to an equilibrium but the direction field indicates that no equilibrium is reachable. By Lemma~\ref{lem:signLhLkCase3}b) and d), 
$\mathcal{L}_h(t_0,x,y)>0$  in   $\mathcal{S}_{4}$. Thus, there exists $T>t_0$ such that $(x(T),y(T))\in \mathcal{S}_{5_T}$.  This region is by Lemma~\ref{lem:invarianceCase1new} positively invariant.

{\it vii)} Assume that  $(x(t_0),y(t_0))\in 
\mathcal{S}_{5_T}$. By Lemma~\ref{lem:invarianceCase1new}, this region is positively invariant.  By the direction field and Proposition~\ref{prop:convergence}, the solution  must  converge to $E^*$. 

{\it viii)} Assume that  $(x(t_0),y(t_0))\in \mathcal{S}_{6}$. By the  component-wise monotonicity,  
the solution either converges to $E^*$
or since $\mathcal{L}_k(t,x,y)<0$ in this region (see Lemma~\ref{lem:signLhLkCase3}d),  the solution 
 must enter $\mathcal{S}_{5_T}$. 
This completes the proof.
\end{proof}

An example of the dynamic (augmented) phase portrait for the time scale $\T=2^{\mathbb{N}_0}$ is provided in Fig.~\ref{fig:example3_coexistence_g} with the incorporation of the first few Root-curves (solid lines). We notice once more that the Root-curves are moving away from their associated nullclines as time increases, similar to the  behavior displayed in Fig.~\ref{fig:Example1_C1_g}. However,  the Root-curves associated with the nontrivial nullclines all intersect at the interior equilibrium.  

\begin{figure}[h!]
    \centering
    \includegraphics[width=0.85\linewidth]{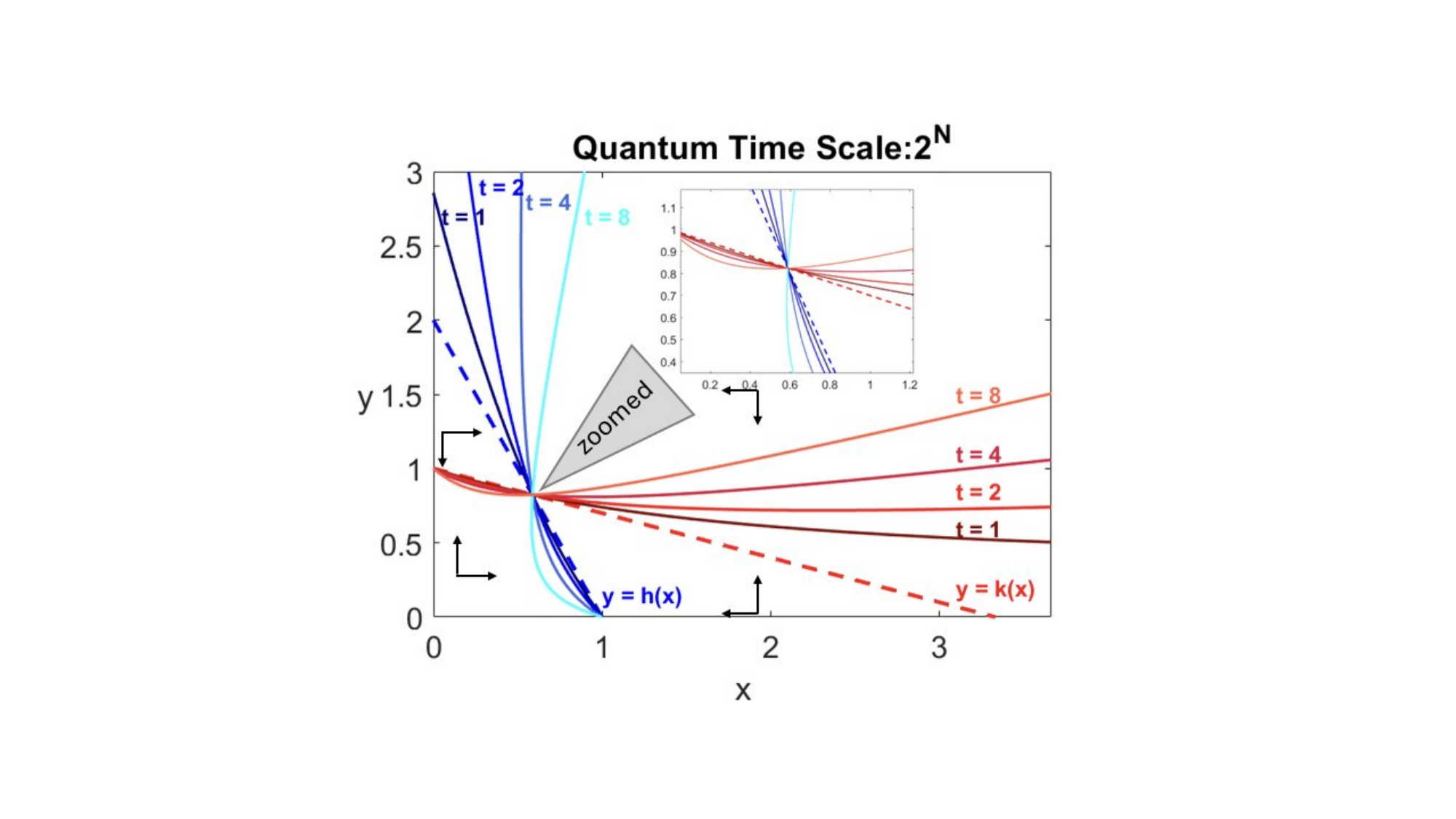}
    \caption{Dynamic Phase Plane of \eqref{eq:CompT} on the quantum time scale $2^{\mathbb{N}_0}$ including the (time-dependent) Root-curves for the specific $t$-values $t\in \{2^0, 2^1, 2^2, 2^3\}$. The  (dashed) $y$-nullcline and the (solid) Root-operator $\mathcal{R}_k$ are displayed in shades of red and the (dashed) $X$-nullcline and the (solid) Root-operator $\mathcal{R}_h$ are in shades of blue. The parameter values are: $\alpha  = 0.5$, $\beta=0.3$, $r=0.5$, $s=0.3$ and $L=K=1$.  }
    \label{fig:example3_coexistence_g}
\end{figure}

\begin{example} 
Fig.~\ref{fig:example3_coexistence} illustrates the first three iterates of the (purple) orbit initialized at $(x(2^0),y(2^0))=(2,1)$ at time $t_0=2^0=1$, including the associated Root-curves determining the positioning of the next iterate with respect to the corresponding nullcline (dashed curve). The iterate at time $T$, for which the Root-curve at time $T$ is relevant to, is indicated by a star symbol. More precisely, the dynamic phase plane at time $t=2^0=1$ includes the (dashed) nullclines and the color-coded associated Root-curves at time $t=1$.  Since the initial value $(x(2^0),y(2^0))=(2,1)$, see purple star in a), is in a region where the Root-operators associated with $y=h(x)$ and $y=k(x)$ are both positive (indicated by the red and blue "+" signs), the next iterate $(x(2^1),y(2^1))$ remains above both nontrivial nullclines,  see purple dot. Fig.~\ref{fig:example3_coexistence}b) illustrates the dynamic phase plane at time $t=2^1=2$. Since $(x(2^1),y(2^1))$, indicated by a purple star, is above both Root-curves where the Root-operators are positive, the next iterate $(x(2^2),y(2^2))$ remains above both nullclines. However, at time $t=2^2$, see c), we observe that $(x(2^2),y(2^2))$ (purple star) is in a region where the Root-operator associated with $y=k(x)$ is negative, but the Root-operator associated with $y=h(x)$ is positive. This implies that the next iterate $(x(2^3),y(2^3))$  must enter the (positively invariant) region bounded by the $x$-axis and the two nontrivial nullclines. Based on the component-wise monotonicity, the solution converges, by Theorem~\ref{thm:Estar_GAS}, to the coexistence equilibrium,   $E^*$.

\begin{figure}[h!]
    \centering
\includegraphics[scale=0.25]{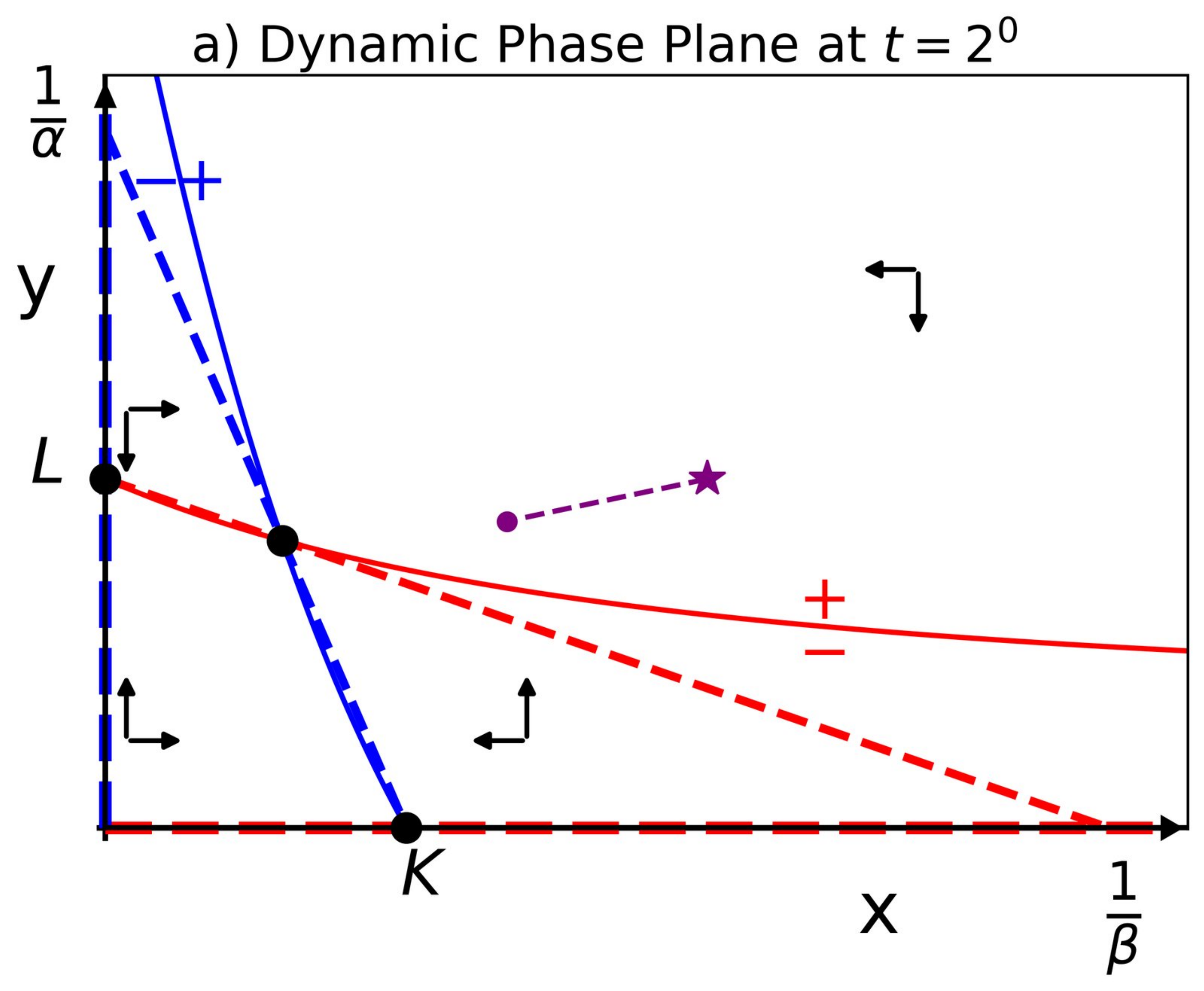}
\includegraphics[scale=0.25]{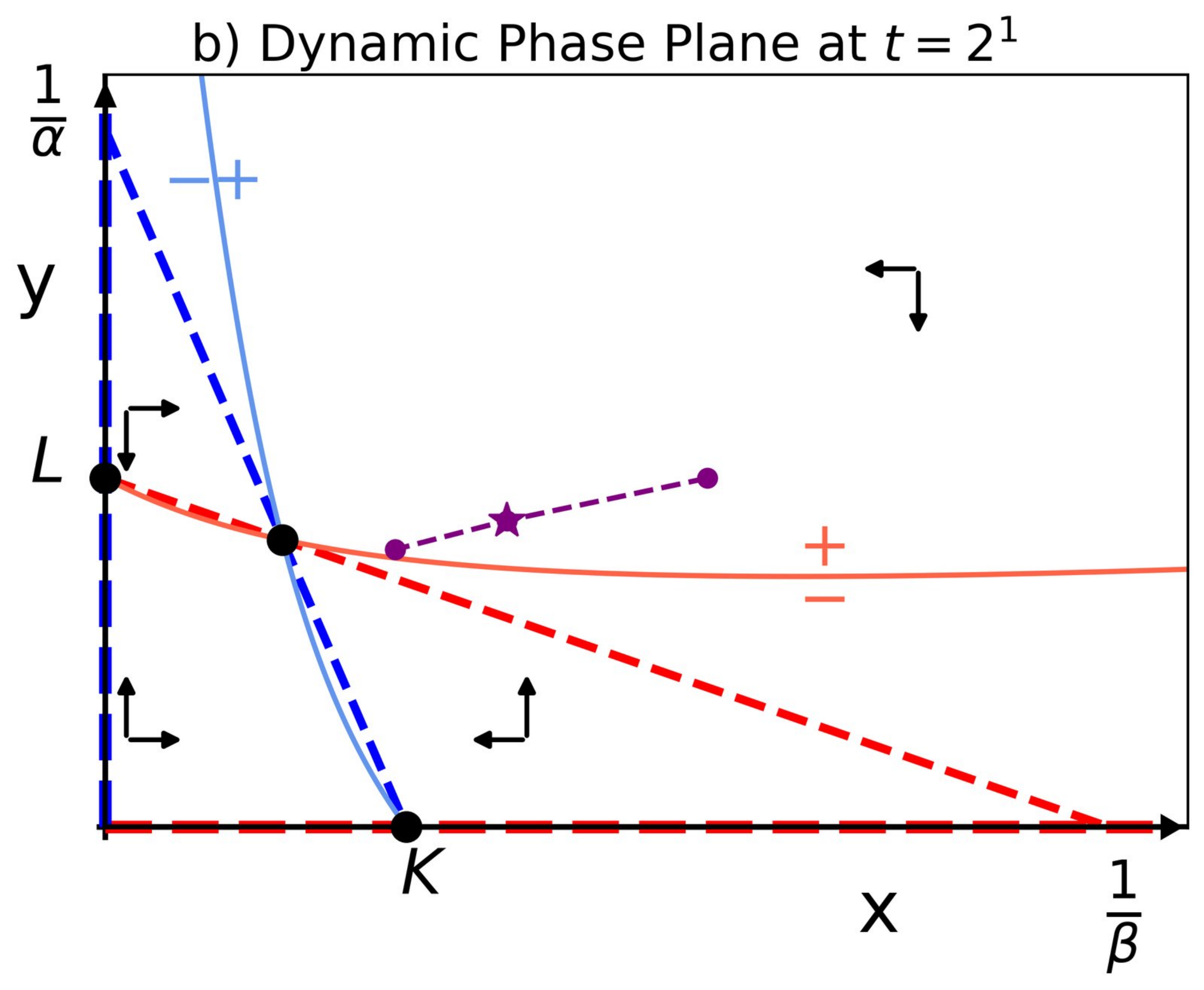}
\includegraphics[scale=0.25]{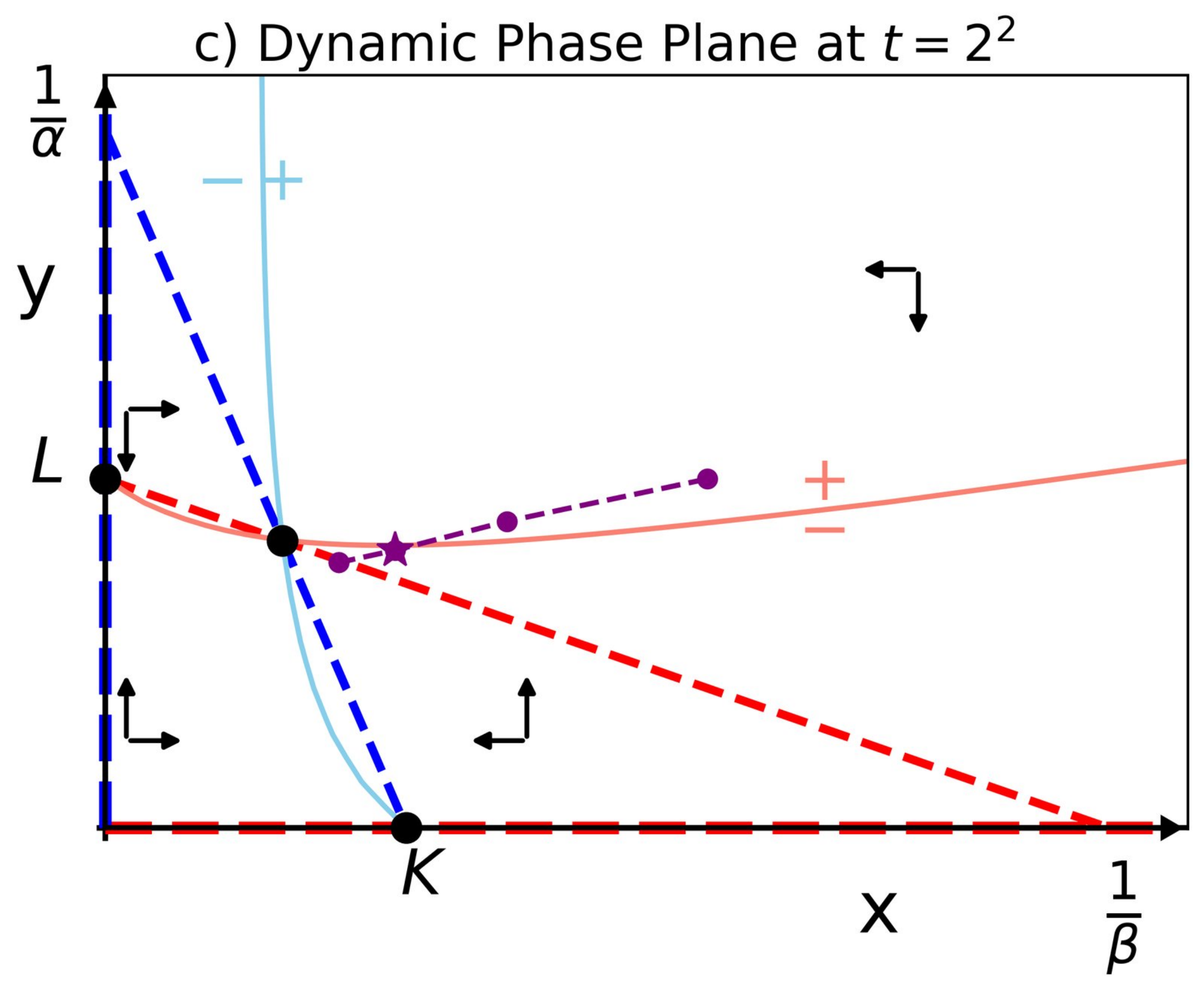}
    \caption{  a) Dynamic Phase Plane of \eqref{eq:CompT} for $\T=2^{\mathbb{N}_0}$ with $\alpha L, \beta K<1$ and the same parameter values as in Fig.~\ref{fig:example3_coexistence_g}. Root-curves are  solid curves, nullclines are dashed.  Curves associated with the $y$-equation are in (shades of) red and those associated with the $x$-equation are in (shades of) blue. b)--d) The first three iterations of a solution initialized at $t_0=2^0=1$ at $(x(2^0),y(2^0))=(2,1)$. The respective time-dependent Root-curve applies to the corresponding time iterate (purple star). The solution represented is in the positively invariant region within 3 iterations and then converges by Theorem~\ref{thm:Estar_GAS} to the coexistence equilibrium, $E^*$.}
    \label{fig:example3_coexistence}
\end{figure}
\end{example}

\begin{example}
Consider $\T=\{1\} \cup_n [2^{2n-1}, 2^{2n}] = \{1\}\cup [2,4]\cup [8,16] \cup \ldots$. At any point within the continuous interval, i.e., $t\in [2^{2n-1},2^{2n})$, the Root-curves are identical to the corresponding nullclines. However, at the right-scattered points $2^{2n}$, the root-curves may differ. The dynamic (augmented) phase plane for the first few $t\in \T$ is shown in Fig.~\ref{fig:Example1_C2}, where we used the same parameter values as in Fig.~\ref{fig:example3_coexistence}. The figure also includes a sample orbit initialized at $t_0=1$ with value $(x(t_0),y(t_0))=(1,2)$. The solution at time $t$, for which the dynamic phase plane is illustrated, is highlighted by a star. That is, for the dynamic phase plane at time $t=1$, the initial value is highlighted by a star. Since the initial value $(x(t_0),y(t_0))$ is in a region where both Root-operators are positive, the next iterate at $t=2$ remains above both nullclines. Since $t$ is right-dense for all $t\in [2,4)$, the Root-curves are identical to their associated nullclines and the dynamic phase plane is identical to the classical phase plane. However, at the right-scattered point $t=4$ (see purple star in c), the Root-curves no longer are identical to their respective nullclines. Regardless, the point $(x(4),y(4))$ is in a region where both Root-operators are still positive so that the next iterate $(x(8),y(8))$ remains above both nullclines.

\begin{figure}[h!]
    \centering
\includegraphics[scale=0.25]{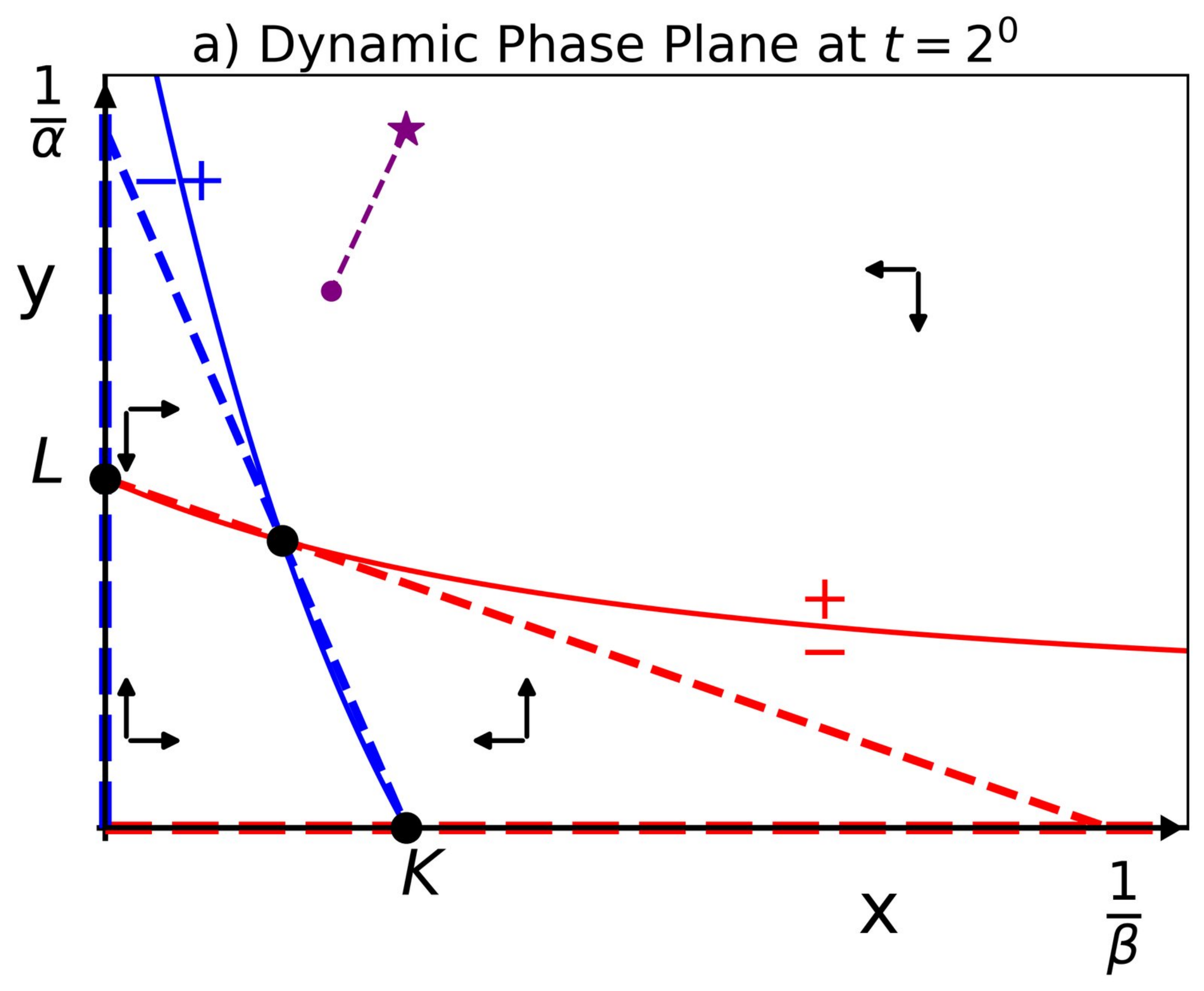}
\includegraphics[scale=0.25]{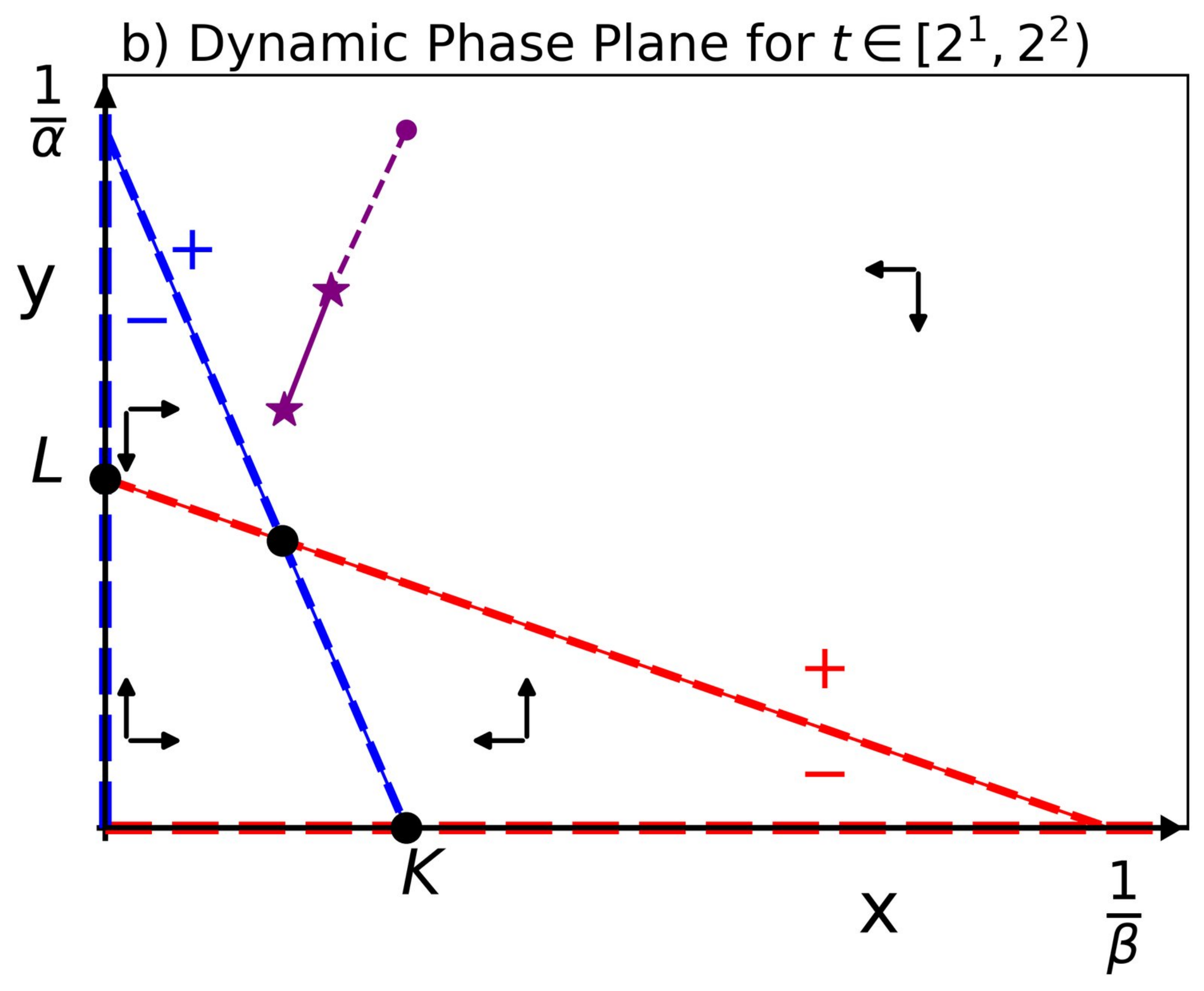}
\includegraphics[scale=0.25]{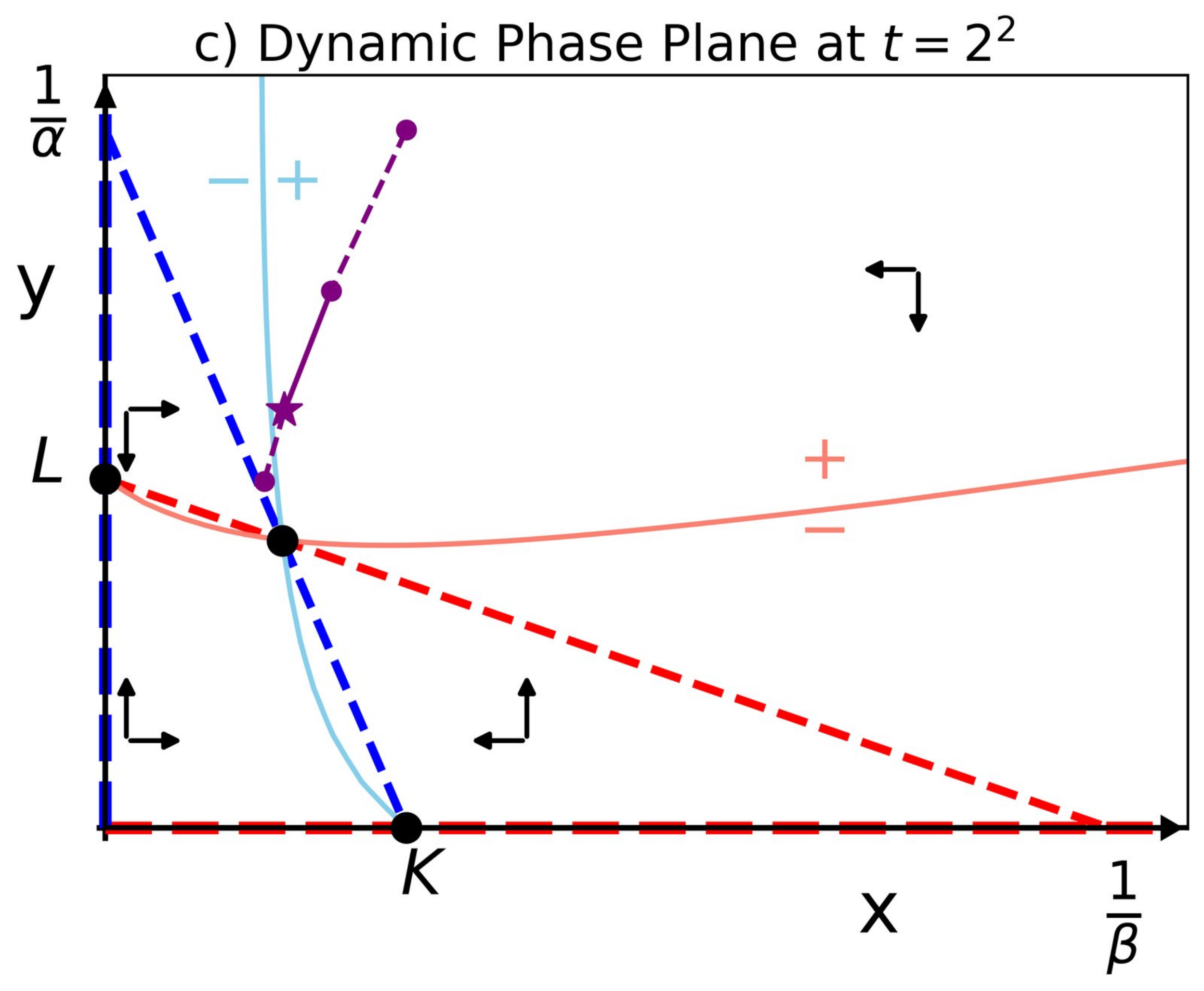}
    \caption{Dynamic Phase Portrait for the first few time points of the time scale $\T=\{1\}\cup [2, 4]\cup  [8, 16] \ldots$ including a sample orbit initialized at $t_0=1$ with value $(x(1),y(1))=(1,2)$. The parameter values are the same as for Fig.~\ref{fig:example3_coexistence_g}. Purple colored continuous (solid) parts refer to the time points in intervals, where the dynamic equation on time scales reduces to a differential equation. Purple colored points obtained by applying the dynamic equation to a right-scattered point are highlighted by isolated dots. The solution value at $t$ corresponding to the same time as the dynamic phase plane is highlighted as purple star.}
    \label{fig:Example1_C2}
\end{figure}
\end{example}

\vspace{3mm}

\textbf{ (IV) \, $\alpha L>1$ and $\beta K>1$}

\vspace{3mm}

\noindent In this case,  $h(x)<k(x)$ for all $x\in (0,x^*)$ and $h(x)>k(x)$ for all $x \in (x^*,K)$. Recall that $h(x)>0$ for $x\in (0,K)$ and $k(x)>0$ for $x\in (0,\beta^{-1})\subset(0,K)$, but $k(x)<0$, for all $x>\beta^{-1}$.

 \begin{lemma}\label{lem:signLhLkCase2}
     Consider \eqref{eq:CompT} with $\alpha L >1$ and $\beta K>1$.  Then, the following hold:
     \begin{itemize}
         \item[a)]  $\mathcal{L}_h(t,x,y)>0$ for all  $0<x\leq x^*$, $y>h(x)$, and $t\in \T$. 
\item[b)] $\mathcal{L}_h(t,x,y)<0$ for all  $x^*\leq x<K$, $
%\max\{0,k(x)\}
0<y<h(x)$, and $t\in \T$.    
\item[c)]  $\mathcal{L}_k(t,x,y)<0$ for all \, $0<x <x^*$, $y^*\leq y<k(x)$, and $t\in \T$.  
\item[d)]  $\mathcal{L}_k(t,x,y)>0$ for all $x^*\leq x<K$,  $\max\{0,k(x)\}<y\leq y^* $, and $t\in \T$.  
    \end{itemize}
\end{lemma}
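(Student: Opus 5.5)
The plan mirrors the proof of Lemma~\ref{lem:signLhLkCase3}. Since the denominators in \eqref{eq:Lh} and \eqref{eq:Lk} are positive, the sign of $\mathcal{L}_h$ (resp.\ $\mathcal{L}_k$) is that of $N_a=a_2\mu^2+a_1\mu+a_0$ (resp.\ $N_b$). For each of a)--d) it therefore suffices to show that $a_0,a_1,a_2$ (resp.\ $b_0,b_1,b_2$) share the required sign on the stated region. Sign persistence for all $\mu(t)\ge 0$ needs the constant term $a_0$ (resp.\ $b_0$) strictly signed. This holds throughout: in a) and b), $a_0=\alpha KL(y-h(x))$ is strictly signed because $y\neq h(x)$; in c) and d), $b_0=KL(y-k(x))$ is strictly signed because $y\neq k(x)$. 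The work is then to sign $a_1,a_2$ and $b_1,b_2$.

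\textbf{The quadratic coefficients.} For $a_2$ we use $a_2=\alpha rs\,y\,[yK(\alpha L-1)+xL(1-\beta K)]$. When $\alpha L>1$ and $\beta K>1$, the bracket is increasing in $y$, decreasing in $x$, and vanishes at $(x^*,y^*)$, because $y^*K(\alpha L-1)=x^*L(\beta K-1)$ from \eqref{eq:equilbria}. The bracket also vanishes along the whole ray $y/x=y^*/x^*$, so the region in which its sign is fixed is determined by that ray, not by the box. In a), $y>h(x)$ with $x\le x^*$ should place $(x,y)$ above this ray. I will check this by noting that $h(x)\ge y^*x/x^*$ on $[0,x^*]$, since $h$ is linear with $h(0)=1/\alpha>0$ and $h(x^*)=y^*$. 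It follows that $a_2>0$. In b), $y<h(x)$ with $x\ge x^*$ gives $y<h(x)\le y^*x/x^*$, hence $a_2<0$. The same ray argument applied to $b_2=rs\beta Lx[Lx(\beta K-1)+Ky(1-\alpha L)]$ gives $b_2<0$ in c), via $y\ge y^*$ and $x<x^*$. In d) it gives $b_2>0$, via $y\le y^*$ and $x\ge x^*$.

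\textbf{The linear coefficients.} This is the main obstacle. I will follow the convexity template of Lemmas~\ref{lem:signLhLkCase1} and~\ref{lem:signLhLkCase3}.

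For a), I will repeat the computation of Lemma~\ref{lem:signLhLkCase1}a). By \eqref{eq:Daiy}, $\partial a_1/\partial y$ evaluated at $y=h(x)$ equals $sx+Ks(\alpha L-1)+\alpha rL(K-x)>0$ for $x<K$, so $a_1$ is increasing in $y$ above $h$. The remaining step is to check that $a_1(x,h(x))=\frac{s(K-x)(K(\alpha L-1)+x(1-\alpha\beta KL))}{\alpha K}\ge 0$ for $0<x\le x^*$. The second factor is linear in $x$: it equals $K(\alpha L-1)>0$ at $x=0$ and vanishes at $x=x^*$, so it is nonnegative on $[0,x^*]$. At $x=x^*$, $y>h(x^*)$ together with strict monotonicity gives $a_1>0$.

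For b), I will use convexity of $a_1$ in each variable, from \eqref{eq:partials_ai}. The maximum of $a_1$ over the region $\{x^*\le x\le K,\ 0\le y\le h(x)\}$ is then attained on its boundary pieces: $y=0$, where $a_1(x,0)=\beta sLx(x-K)\le0$; $x=K$, where $a_1(K,y)=y(sK\alpha L+r\alpha^2 yKL)$; and $y=h(x)$, where the formula above is $\le0$ for $x\ge x^*$.

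The edge $x=K$ is positive, so the box argument fails there. I will look for a different cut, either using the segment $x=x^*$ or restricting to the triangle with $y<h(x)$; at $x=K$ the constraint forces $y<h(K)=0$, so the edge degenerates to a single point, and I expect that to rescue the argument.

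Parts c) and d) for $b_1$ will be handled symmetrically. I will use \eqref{eq:partials_bi} and evaluate $b_1$ on the edges $y=k(x)$, $x=0$ or $x=x^*$, and $y=y^*$ or $y=0$, using the explicit formula $b_1(x,k(x))=\beta rL^2x(K(1-\alpha L)-x(1-\alpha\beta KL))$. Its second factor vanishes at $x^*$, and in case~(IV) it has the sign needed on each side of $x^*$. Where a monotonicity argument is cleaner than convexity, I will instead use the sign of $\partial b_1/\partial x$ from \eqref{eq:Dbi} along $y=k(x)$, as in Lemma~\ref{lem:signLhLkCase3}c).
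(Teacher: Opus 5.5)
Your plan is sound and follows the paper's strategy: read off the sign of $\mathcal{L}_h$ (resp.\ $\mathcal{L}_k$) from the coefficients $a_i$ (resp.\ $b_i$), with $a_0,b_0$ strictly signed. Two of your steps differ from the paper and are worth keeping.

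For $a_2$ and $b_2$ you use that the bracket $yK(\alpha L-1)+xL(1-\beta K)$ is homogeneous and vanishes on the ray through $(x^*,y^*)$. This settles all four parts at once. The paper instead bounds the bracket separately in each part by its monotonicity in $x$ and $y$; for example, in a) it substitutes $y=h(x)$ to get $a_2>rsy\,\alpha K(k(x)-h(x))\ge 0$.

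For $a_1$ in a) you show that $a_1(x,\cdot)$ is increasing on $[h(x),\infty)$ and that $a_1(x,h(x))\ge 0$ on $(0,x^*]$. This is correct, and it actually repairs the paper's argument. The paper splits into the cases $h(x)<y\le k(x)$ and $y>k(x)$. In the second case its chain of lower bounds ends at $s(K-x)(k(x)-y)$, which is negative when $y>k(x)$, so it does not show $a_1>0$ there. Your single monotonicity argument covers both cases.

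Two steps need tightening.

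In b), the set $\{x^*\le x\le K,\ 0\le y\le h(x)\}$ is the triangle with vertices $(x^*,0)$, $(K,0)$, $(x^*,y^*)$. So $x=K$ is not an edge, but $x=x^*$ is. You do not even need that edge. For fixed $x\in[x^*,K)$, the map $y\mapsto a_1(x,y)$ is convex on $[0,h(x)]$, and you have already shown it is $\le 0$ at both endpoints ($a_1(x,0)=\beta sLx(x-K)$ and $a_1(x,h(x))\le 0$). That gives $a_1\le 0$ directly. The paper also checks the edge $x=x^*$, using $a_1(x^*,0)<0$ and $a_1(x^*,y^*)=0$.

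In d), you need $b_1\ge 0$, which is a \emph{lower} bound. Convexity only controls the maximum through boundary values, so handling d) with the same convexity template would be invalid. The monotonicity route you list as an option is required there. It closes as in the paper:
\begin{itemize}
\item $\partial b_1/\partial x$ is increasing in $y$ and equals $\beta L^2\bigl(r(K-x)+s\beta Kx\bigr)>0$ on $y=k(x)$, so $b_1$ is increasing in $x$ throughout the region.
\item For fixed $y\in(0,y^*]$, the horizontal slice starts at $x=(L-y)/(\beta L)$ on the curve $y=k(x)$, and this point is $\ge x^*$ because $y\le y^*$.
\item At that point $b_1=\beta rL^2x\,\phi_2(x)\ge 0$, where $\phi_2(x)=K(1-\alpha L)+x(\alpha\beta KL-1)$.
\end{itemize}

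Part c) does go through by convexity as you propose. Use the slices $y\in[y^*,k(x)]$ and the sign of $b_1(x,k(x))$ for $x\le x^*$. On the edge $y=y^*$, use convexity in $x$ together with $b_1(0,y^*)<0=b_1(x^*,y^*)$.
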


\begin{proof}
Let $\alpha L>1$ and $\beta K>1$.
    \begin{itemize}
    \item[{\it a)}]  Assume  $0<x\leq x^*$ and $y>h(x)$.  Since $\alpha L>1$ and $\beta K>1$, $k(x)>h(x)$. To prove $\mathcal{L}_h(t,x,y)>0$, by \eqref{eq:Lh}, it suffices to show that $a_i(x,y)$, given in \eqref{eq:ai}, are nonnegative and not all zero, for $i\in \{0,1,2\}$. 
    
Since $y>h(x)$, $a_0(x,y)>0$. Furthermore, 
since $\alpha L>1$ and $y>h(x)$, it follows that
    \begin{align*}
  a_2(x,y)&>  \alpha r s y (h(x)K(\alpha L-1) + x L  (1 - \beta K))\\
    &=r s y (\alpha KL(1-\beta x)+x-K)=r s y \alpha K(k(x)-h(x))> 0.
 \end{align*}
    It remains to show that $a_1(x,y)\geq 0$. Assume first that  $y>k(x)$, and note that $k(x)>h(x)$ for $0<x\leq x^*$. Then,  
    \begin{align*}
        a_1(x,y)&>\beta Lsx(x-K)+(s(x-K)+\alpha L((x-K)r+Ks))y+\alpha^2KLryh(x)\\
         &> (x-K)(\beta L s x+sy +\alpha rLy ) + s\alpha LKh(x) + \alpha rLy (K-x)\\
         &=(x-K)(\beta L s x+sy +\alpha rLy ) + s L(K-x) + \alpha rLy (K-x)\\
         &=s(K-x)(L(1-\beta x)-y) = s(K-x)(k(x)-y)>0.
    \end{align*}
Now, assume that  $h(x)<y\leq k(x)$,
     \begin{align*}
        a_1(x,y)&>\beta Lsx(x-K)+(s(x-K)+\alpha L((x-K)r+Ks))y+\alpha^2KLryh(x)\\
         &> (x-K)(\beta L s x+sy +\alpha rLy ) + s\alpha LKh(x) + \alpha rLy (K-x)\\
         &\geq (x-K)(\beta L s x+sk(x) +\alpha rLy ) + s L(K-x) + \alpha rLy (K-x)\\
         &=(x-K)(\beta L s x+sL(1-\beta x)-sL)=0.
    \end{align*}
    This completes the proof of case {\it a)}. 
    %Sabrina: I am here
   
    \item[{\it b)}]  Assume that   $x^*\leq x<K$ and  $0<y<h(x)$. To show that $\mathcal{L}_h(t,x,y)$ given in \eqref{eq:Lh} is negative, it suffices to show that  $a_i(x,y), i=0,1,2$, given in \eqref{eq:ai} are all non-positive and not all zero. 
    First, $a_0(x,y)<0$, since $y<h(x)$.     
Next, consider $a_1(x,y)$. From \eqref{eq:partials_ai}, it follows that for fixed $x$, $a_1(x,y)$ is a convex function of $y$ and for fixed $y$,  $a_1(x,y)$ is a convex function of $x$.  Hence, it suffices to show that $a_1(x,y)\leq 0$ on the sets
$$\Lambda_1:=\{(x,y)\, : \, x=x^*, \, 0\leq y\leq y^*\}, $$
$$\Lambda_2:=\{ (x,y) \, : \, y=0, \, x^*\leq x \leq K \}, $$ 
and $$\Lambda_3:=\{ (x,y)\, : \, 
 x^*\leq x \leq K, \, y = h(x)\} .$$

Let $(x,y)\in \Lambda_1$.
Since $a_1(x,y)$ is convex in   $y$ for fixed $x^*$, and $y^*=h(x^*)$, \\ $a_1(x^*,y)\leq 
\max\{a_1(x^*,0), a_1(x^*,h(x^*))\}$. Since $a_1(x^*,0)=\beta sL x^*(x^*-K)<0$ 
     and $a_1(x^*,h(x^*))=a_1(x^*,y^*)=0$,  $a_1(x^*,y)\leq 0$ for $(x,y)\in  \Lambda_1$.

Next, let  $(x,y)\in \Lambda_2$. Setting $y=0$ in $a_1(x,y)$ given in \eqref{eq:ai}, $a_1(x,0)=sx\beta L(x-K) \leq 0$, for $x\leq K$    with equality if and only if $x=K$.

   Finally, let  $(x,y)\in \Lambda_3$. Substituting $y=h(x)$ in $a_1(x,y)$ defined  in \eqref{eq:ai} yields 
\begin{align*}
a_1(x,h(x))&=\frac{s(K-x)}{\alpha K}((1-\alpha \beta LK)x  -K(1-\alpha L))\leq 0,
\end{align*}
since, $((1-\alpha \beta LK)x  -K(1-\alpha L))$ is decreasing in $x$, so that it attains its maximum value  at $x^*$ for  $x\in \Lambda_3$ and $a_1(x^*,h(x^*))=a_1(x^*,y^*)=0$. 
 Therefore, $a_1(x,y)\leq 0$ for all  $(x,y)\in \Lambda_1\bigcup \Lambda_2 \bigcup \Lambda_3$ with equality if and only if $x=K$ or $x=x^*$.

Lastly, we show   that $a_2(x,y)\leq 0$ for $x^*\leq x\leq  K$ and $0\leq y \leq  h(x)$, where $a_2(x,y)$ is defined in \eqref{eq:ai}.
Since, $\alpha L>1$ and $y \leq h(x)$, we have
$$yK (\alpha L-1) + xL  (1 - \beta K)\leq h(x)K (\alpha L-1) + xL  (1 - \beta K))= (1-\alpha\beta K L)x+K(\alpha L-1),$$ where the right hand side is a decreasing function that attains its maximum for points in $\Lambda_3$ at $x=x^*$.
Evaluating this expression at $x=x^*$, using the value given in \eqref{eq:equilbria}, it follows that 
 $(1-\alpha \beta KL)x^*+K(\alpha L-1)=0.$ 
 Hence, $a_2(x,y)\leq 0$ for all $(x,y)\in\Lambda_3$ with equality if and only $x=x^*$ and $y=h(x^*)=y^*$. 
     This completes the proof of part {\it b)}.

 \item[{\it c)}]  Let $0<x<x^*$  and $y^*\leq  y<k(x)$. To show that $\mathcal{L}_k(t,x,y)$ given in \eqref{eq:Lk} is negative, it suffices to show that 
 $b_i(x,y)$ for $i=0,1,2$, given in \eqref{eq:bi}, are all non-positive and not all zero. 
    First,  $b_0(x,y)<0$, since $y<k(x)$.
%%%%%%%%%%%%%%%%%%%%%%%%%

Next we show that $b_1(x,y)\leq 0$ for $y^*\leq y<k(x)$ and $0<x<x^*$. 
By \eqref{eq:partials_bi}, for fixed $x$,
$b_1(x,y)$ is a convex function of $y$  and for fixed $y$, $b_1(x,y)$ is a convex function of $x$. Therefore,   the maximum of $b_1(x,y)$  occurs  on the boundary of the region considered in this case, i.e., 
for $\displaystyle{(x,y)\in \bigcup_{i=4}^6 \overline{\Lambda}_i}$ where 
\begin{align*}
   \Lambda_4&:=\{(x,y)\colon \,  x=0, \, y^*\leq y<L\},\\
  \Lambda_5&:=\{(x,y)\colon \,  0<x<x^*, \, y=y^*\},  \\
 \Lambda_6&:=\{(x,y)\colon  \, 0<x<x^*, \, y=k(x)\}.
\end{align*}
 Let $(x,y)\in \overline{\Lambda}_4$. Then,
 $b_1(0,y) = \alpha r y K  L(y-L)\leq 0$, with equality if and only if $y=L$.  

 For $(x,y)\in \overline{\Lambda}_5$, $b_1(x,y^*)$ is convex in $x$, by \eqref{eq:partials_bi}, so that its maximum occurs at an end point, that is, $b_1(x,y^*)\leq \max\{b_1(0,y^*), b_1(x^*,y^*)\}$. Since $b_1(0,y^*)=\alpha r y^* K  L(y^*-L)< 0$
and $b_1(x^*,y^*)=0$,  it follows that $b_1(x,y^*)\leq 0$ for $(x,y)\in \overline{\Lambda}_5$ with equality if and only if $(x,y)=(x^*,y^*)$. 

For $(x,y)\in \overline{\Lambda}_6$, $b_2(x,y) =b_2(x,k(x))=\beta L^2 r sx \phi_2(x)$ where  $\phi_2(x)=K(1-\alpha L)+x(\alpha \beta  K L - 1)$, that is, since $\alpha \beta K L >1$, a  strictly increasing function of  $x$ and hence, $\phi_2(x)< \phi_2(x^*)=0$ for $x<x^*$, so that $b_2(x,k(x))\leq 0$ for $(x,y) \in\overline{\Lambda}_6$.

It remains to show that $b_2(x,y)\leq 0$ for $0<x<x^*$ and $y^*\leq y<k(x)$.
By its definition in \eqref{eq:bi},  since $\alpha L>1$, $b_2(x,y)$ is a strictly decreasing function of $y$ and so for $y\geq y^*$, 
\begin{align*}
b_2(x,y)&\leq b_2(x,y^*)=rsx\beta L(Lx(\beta K-1)+Ky^*(1-\alpha L))\\
    &=rsx\beta L\left(\frac{L(\beta K -1)}{(\alpha \beta K L-1)}\right)\phi_2(x),
\end{align*}
where $\phi_2(x):=K(1-\alpha L)+x(\alpha \beta KL-1)$.  Since $\phi_2(x)$ is strictly increasing in $x$ for $0\leq x \leq x^*$, $\phi_2(x)\leq \phi_2(x^*)=0.$ Thus, $b_2(x,y)\leq 0$  with quality if and only if $x=x^*$ and $y=y^*$.

This completes the proof of part {\it c)}.

\item[{\it d)}]   
    Assume $(x,y)\neq(x^*,y^*),$  $x^*\leq x<K$ and $\max\{0, k(x)\}<y\leq y^*$. To show  that $\mathcal{L}_k(t,x,y)$ given in \eqref{eq:Lk} is positive,  it suffices to show that $b_i(x,y)$, for $i=0,1,2$,   defined in \eqref{eq:bi}, are nonnegative and not all zero.  Since $\max\{0,k(x)\}<y $, $b_0(x,y)> 0$.

For $x\geq x^*$ and $\max\{0,h(x)\}<y\leq  y^*$, since $\mbox{sign}(b_2(x,y))=\mbox{sign}(Lx(\beta K-1)+Ky(1-\alpha L))$ and the latter is increasing in $x$ and decreasing in $y$, we have 
\begin{align*}
b_2(x,y)\geq 
&rsx \beta L(Lx^*(\beta K-1)+Ky^*(1-\alpha L))=0.
\end{align*}

It remains to show that $b_1(x,y)\geq 0$ for $x^*\leq x <K$ and $\max\{0, k(x)\}<y\leq y^*$. Since,
 $$\frac{\partial b_1(x,y) }{\partial x}=L(s\beta K(y - L(-\beta x + 1)) + s x 
 \beta^2 K L + r L (K \beta - 1) + r y) 
 $$
 is linear in $y$ with positive slope,  
and since we are assuming that $y>\max\{k(x),0\}$, 
$$\frac{\partial b_1(x,y)}{\partial x}>\frac{\partial b_1(x,k(x))}{\partial x}=\beta L^2 (r(K-x) + K \beta s x)>0\quad \mbox{for} \quad x<K.$$
Thus, $b_1(x,y)$ is increasing in $x$. 
Furthermore, since  $y>k(x)=L(1-\beta x)$ implies $x>\frac{1}{\beta L}(L-y)$, we have
\begin{align*}
b_1(x,y)&\geq b_1\left(\frac{1}{\beta L}(L-y),y\right)=\frac{r (L - y) (L(\beta K-1)-(\alpha \beta K L - 1 )y)}{\beta}\\
&\geq \frac{r (L - y) (L(\beta K-1)-(\alpha \beta K L - 1 )y^*)}{\beta}=0.
\end{align*}
Hence, $b_1(x,y)\geq 0$, in this case. 
\end{itemize}
This completes the proof of part d), and hence the Lemma is proven.
\end{proof}

Similar to Fig.~\ref{fig:regions2}, we introduce regions of component-wise monotonicity and visualize them in Fig.~\ref{fig:Case2}. Regions $\mathcal{B}_i, i=0,1$  are defined in  Proposition~\ref{Prop:boxes} and  
    \begin {align*}
\mathcal{R}_1 &:=\{(x,y)\, : \, 0<x<x^*, \, \,  y^*<y<h(x) \},\\   
\mathcal{R}_2 &:=\{(x,y)\, : \, 0<x<x^*, \, \,  h(x)<y<k(x) \},\\
\mathcal{R}_3 &:=\{(x,y)\, : \, 0<x<x^*, \, \,  y>k(x) \},\\
\mathcal{R}_4 &:=\{(x,y)\, : \, x^*<x<\frac{1}{\beta}, \, \,  0<y<k(x)\},\\
\mathcal{R}_5 &:=\{(x,y)\, : \, x^*<x<K, \, \,  \max\{0,k(x)\}<y<h(x) \},\\
\mathcal{R}_6 &:=\{(x,y)\, : \, x^*<x, \, \,  \max\{0,h(x)\} <y<y^*\},\\
\mathcal{R}_{i_T} &:=\{(x,y)\in \mathcal{\overline{R}}_{i} \,: \, x>0, \,  y>0, \, (x,y)\neq (x^*,y^*)\}, \ i=1,2,\dots,6.
    \end{align*}

\begin{figure}[h!]
    \centering
    \includegraphics[ width=0.65\linewidth]{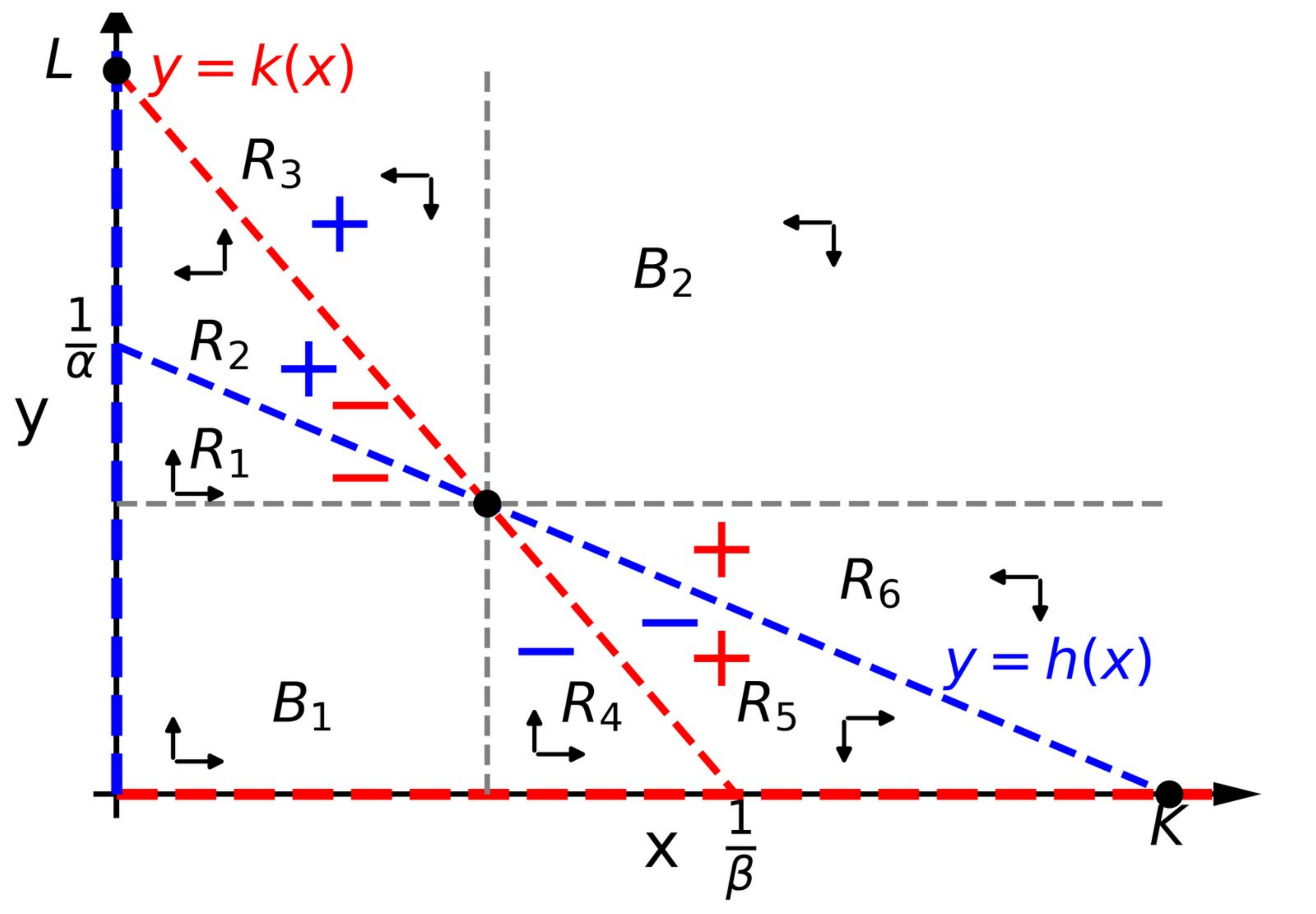}
    \caption{Regions considered with generic nullclines $y=h(x)$ and $y=k(x)$, in the case where $\alpha L>1$ and $\beta K>1$. The colored signs relate to the sign of the corresponding Root-operators $\mathcal{L}_h$ (in blue) and $\mathcal{L}_k$ (in orange). The signs are based on the results in Lemma~\ref{lem:signLhLkCase2}.
   The arrows represent the sign of $x^\Delta$ (horizontal) and $y^\Delta$ (vertical). Each region exhibits component-wise monotonicity.}
    \label{fig:Case2}
\end{figure}

Following the proof of Lemma~\ref{lem:invarianceCase1new}, we can similarly show for this case that the same regions are once more positively invariant. Due to the similarity, the proof is omitted.

\begin{lemma}\label{lem:invarianceCase1}
   Consider \eqref{eq:CompT}  with $\alpha L >1$ and $\beta K>1$. The region $\mathcal{R}_{2_T}$ is positively invariant. The region $\mathcal{R}_{5_T}$ is positively invariant.  
\end{lemma}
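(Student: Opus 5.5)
We follow the template of the proof of Lemma~\ref{lem:invarianceCase1new}: first treat interior points of each region, then each boundary piece lying in the open quadrant, and in each case invoke Theorem~\ref{thm:need} for both nontrivial nullclines. The required sign information comes from Lemma~\ref{lem:signLhLkCase2}.

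\textbf{The region $\mathcal{R}_{2_T}$.} Here $0<x\le x^*$ and $h(x)\le y\le k(x)$, with $(x,y)\neq(x^*,y^*)$.

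For an interior point of $\mathcal{R}_2$ we have $y>h(x)$, so Lemma~\ref{lem:signLhLkCase2}a) gives $\mathcal{L}_h>0$. For $\mathcal{L}_k$ we want Lemma~\ref{lem:signLhLkCase2}c), but that part only covers $y\ge y^*$. The main obstacle is therefore points of $\mathcal{R}_2$ with $y<y^*$.

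To handle them, I would observe that on $\mathcal{R}_2$ we have $h(x)\ge h(x^*)=y^*$, because $h$ is decreasing and $x\le x^*$. So in fact $y\ge y^*$ throughout $\mathcal{R}_{2_T}$, and part c) applies. The strict inequality $x<x^*$ is guaranteed for all points of $\mathcal{R}_{2_T}$ other than $E^*$. Hence $\mathcal{L}_k<0$.

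Theorem~\ref{thm:need}, applied twice and with the minimum of the two $\epsilon$'s, then keeps $h(x(t))<y(t)<k(x(t))$ on $[T,\sigma(T)]$, or on a right neighbourhood of $T$ if $T$ is right-dense.

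Next come the boundary curves.

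\emph{On $y=h(x)$ with $0<x<x^*$.} Here $x^\sigma=x$ and $y^\Delta>0$, since $y=h(x)<k(x)$. Hence $\mathcal{L}_h=y^\sigma-h(x)\ge0$, with equality only when $\mu=0$. In the right-dense case the direction $y^\Delta>0$ pushes the solution into $y>h(x)$. For the other nullcline, I would compute $\mathcal{L}_k$ exactly as in part b) of the proof of Theorem~\ref{thm:y_only_gas}:
$$\mathcal{L}_k=-(k-h)\Bigl\{1-\mu\tfrac{sy}{L+s\mu(y+\beta Lx)}\Bigr\}<0.$$

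\emph{On $y=k(x)$ with $0<x<x^*$.} Here $y^\sigma=y$ and $x^\Delta<0$, so $x^\sigma\le x$. Since $k$ is decreasing, this gives $\mathcal{L}_k=k(x)-k(x^\sigma)\le0$. For $\mathcal{L}_h$, the computation in part d) of the proof of Theorem~\ref{thm:y_only_gas} gives
$$\mathcal{L}_h=(k-h)\Bigl\{1-\tfrac{r\mu x/K}{1+r\mu(x/K+\alpha y)}\Bigr\}>0.$$

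\emph{On $x=x^*$.} This piece reduces to the single point $E^*$, which is excluded from $\mathcal{R}_{2_T}$.

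\textbf{The region $\mathcal{R}_{5_T}$.} Here $x^*\le x<K$ and $\max\{0,k(x)\}\le y\le h(x)$.

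In this region $k(x)\le k(x^*)=y^*$, because $k$ is decreasing and $x\ge x^*$. For points with $y\le y^*$, Lemma~\ref{lem:signLhLkCase2}b),d) give $\mathcal{L}_h<0$ and $\mathcal{L}_k>0$. Points with $y>y^*$ are the delicate case for part d).

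For the sign of $\mathcal{L}_k$ at such points, I would instead argue directly that $b_0>0$, $b_2\ge0$ and $b_1\ge0$. The sign of $b_2$ is that of $Lx(\beta K-1)+Ky(1-\alpha L)$. I would bound this using $y\le h(x)$ together with $(1-\alpha\beta KL)x+K(\alpha L-1)\ge0$. For $b_1$, I would reuse its monotonicity in $x$ established in part d) of that lemma.

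The boundary pieces are then handled like those of $\mathcal{R}_2$:

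\emph{On $y=0$ with $x\in[\beta^{-1},K)$.} Here $\mathcal{L}_h<0$ by part b). Positivity of $y$ is preserved by Proposition~\ref{prop:nonneg}.

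\emph{On $y=k(x)$.} We have $y^\Delta=0$ and $x^\Delta>0$, so $x^\sigma\ge x$ and hence $\mathcal{L}_k=k(x)-k(x^\sigma)\ge0$.

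\emph{On $y=h(x)$.} We have $x^\sigma=x$ and $y^\Delta<0$, so $\mathcal{L}_h\le0$.

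In both of these cases the companion operator is computed with the same factorized formulas used above, whose bracketed factors are always positive. Theorem~\ref{thm:need} then closes the argument.
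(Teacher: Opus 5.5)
Your proof is correct and follows the route the paper intends. The paper omits the argument, deferring to the template of Lemma~\ref{lem:invarianceCase1new} with the sign information of Lemma~\ref{lem:signLhLkCase2}, and that is what you carry out.

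One correction: the ``delicate case'' $y>y^*$ in $\mathcal{R}_{5_T}$ is empty. Since $h$ is decreasing, $y\le h(x)\le h(x^*)=y^*$ for $x\ge x^*$, which mirrors your own observation for $\mathcal{R}_2$. So parts b) and d) of Lemma~\ref{lem:signLhLkCase2} already cover all of $\mathcal{R}_{5_T}$, and the detour should be deleted. As written it is also wrong: $(1-\alpha\beta KL)x+K(\alpha L-1)\le 0$, not $\ge 0$, for $x\ge x^*$.
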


Based on Fig.~\ref{fig:Case2} and following the similar arguments as in the proof of Theorem~\ref{thm:Estar_GAS}, we obtain the following result. 

\begin{theorem}\label{thm:Estar_saddle}
     Consider \eqref{eq:CompT}  with positive initial conditions. Assume $\alpha L >1$ and $\beta K>1$. Then, solutions converge either to $E^*=(x^*,y^*)$, $E^*_K$, or $E^*_L$. No solution converges to $E_0^*$. 
\end{theorem}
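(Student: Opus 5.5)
The plan mirrors the proof of Theorem~\ref{thm:Estar_GAS}, using the regions $\mathcal{R}_i$, $\mathcal{B}_0$, $\mathcal{B}_1$ of Fig.~\ref{fig:Case2}, the sign information of Lemma~\ref{lem:signLhLkCase2}, the invariance of $\mathcal{R}_{2_T}$ and $\mathcal{R}_{5_T}$ from Lemma~\ref{lem:invarianceCase1}, and Proposition~\ref{prop:convergence}. First I record the direction field. In $\mathcal{R}_2$ (between $h$ and $k$, left of $x^*$) we have $x^\Delta<0$ and $y^\Delta>0$. In $\mathcal{R}_5$ (between $k$ and $h$, right of $x^*$) we have $x^\Delta>0$ and $y^\Delta<0$. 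In $\mathcal{R}_1,\mathcal{R}_4$ and $\mathcal{B}_0$ (below both nullclines) both derivatives are positive. In $\mathcal{R}_3,\mathcal{R}_6$ and $\mathcal{B}_1$ (above both nullclines) both are negative. Every region is therefore component-wise monotone. By Theorem~\ref{thm:bound} and Proposition~\ref{prop:convergence}, any solution that eventually stays in the closure of one region converges to an equilibrium in that closure.

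The main case split goes as follows. A solution that enters $\mathcal{R}_{2_T}$ stays there by Lemma~\ref{lem:invarianceCase1}. There $x$ is nonincreasing and $y$ nondecreasing, so the limit equilibrium lies in $\overline{\mathcal{R}}_2$ and is either $E^*$ or $E_L^*$. Symmetrically, a solution entering $\mathcal{R}_{5_T}$ converges to $E^*$ or $E_K^*$. For the remaining regions I argue as in steps i)--viii) of the proof of Theorem~\ref{thm:Estar_GAS}.

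\begin{itemize}
\item[(a)] For $\mathcal{B}_0$: either the solution stays and converges to $E^*$ (the only equilibrium reachable with both components increasing from a positive point), or it leaves. By Proposition~\ref{Prop:boxes} it cannot jump directly into $\mathcal{B}_1$, so it lands in some $\mathcal{R}_{i_T}$.
\item[(b)] For $\mathcal{B}_1$: either the solution stays and converges to $E^*$, or it leaves into $\bigcup\mathcal{R}_{i_T}\cup\mathcal{B}_0$.
\item[(c)] For $\mathcal{R}_1$ and $\mathcal{R}_4$ (both components increasing): no equilibrium is reachable inside the closure other than possibly $E^*$. For $\mathcal{R}_1$, Lemma~\ref{lem:signLhLkCase2}c) gives $\mathcal{L}_k<0$, so by Theorem~\ref{thm:need} the next crossing cannot go above $k$. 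Any exit therefore lands in $\overline{\mathcal{R}}_2$ or $\mathcal{B}_0/\mathcal{B}_1$. Similarly, $\mathcal{R}_4$ exits only into $\overline{\mathcal{R}}_5$ by Lemma~\ref{lem:signLhLkCase2}b).
\item[(d)] For $\mathcal{R}_3$ and $\mathcal{R}_6$ (both decreasing): Lemma~\ref{lem:signLhLkCase2}a) and d) guarantee the solution stays above $h$ (respectively above $k$) at the next step. It therefore either converges to $E^*$ (or to $E_L^*$, which lies on the boundary of $\mathcal{R}_3$ when $L>y^*$), or enters $\mathcal{R}_{2_T}$ or $\mathcal{R}_{5_T}$.
\end{itemize}

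Chaining these cases, every positive solution ends in one of the invariant regions or converges inside a monotone region, hence to $E^*$, $E_K^*$ or $E_L^*$.

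The last claim, that no solution converges to $E_0^*$, is handled separately. Near $(0,0)$, with $x,y>0$ and $x/K+\alpha y<1$, $y/L+\beta x<1$, equation \eqref{eq:xy_sigma} gives $x^\sigma>x$ and $y^\sigma>y$, and both components increase on right-dense stretches. If a solution converged to $(0,0)$, it would eventually lie in this neighbourhood with both coordinates nondecreasing and positive by Proposition~\ref{prop:nonneg}. That contradicts convergence to $0$.

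The main obstacle is step (a), the potential cycling between $\mathcal{B}_0$ and $\mathcal{B}_1$ via the $\mathcal{R}_i$ with nonmonotone behaviour. Proposition~\ref{Prop:boxes} only forbids direct jumps $\mathcal{B}_0\to\mathcal{B}_1$. I would try to rule out infinitely many alternations by noting two facts. First, exits from $\mathcal{R}_1,\mathcal{R}_3$ (left of $x^*$) go into $\mathcal{R}_{2_T}$ or back into a box. Second, an exit from $\mathcal{B}_1$ to the left or right must pass above both nullclines, into $\mathcal{R}_3$ or $\mathcal{R}_6$, whose only exits are into the invariant regions. This would leave only finitely many transitions before the solution is captured or converges. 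The care is needed at the boundary points, for example the separatrix behaviour through $E^*$ in the saddle case.
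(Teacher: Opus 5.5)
Your case analysis (a)--(d) follows the route the paper indicates; it simply refers back to the proof of Theorem~\ref{thm:Estar_GAS}. Your separate argument excluding $E_0^*$ is also fine. The gap is the step you flag yourself: nothing in your proposal shows that a solution changes region only finitely often, and the two facts you offer do not establish it.

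\emph{First fact.} It lets exits from $\mathcal{R}_1$ and $\mathcal{R}_3$ go ``back into a box''. Step (c) does the same, where $\mathcal{R}_1$ may exit into $\mathcal{B}_0/\mathcal{B}_1$. So a loop such as $\mathcal{B}_0\to\mathcal{R}_1\to\mathcal{B}_0\to\cdots$ is never excluded, and Proposition~\ref{Prop:boxes} alone cannot exclude it.

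\emph{Second fact.} It is false at right-scattered points. Lemma~\ref{lem:signLhLkCase2} gives no sign for $\mathcal{L}_h,\mathcal{L}_k$ in the interior of $\mathcal{B}_1$, and one jump out of $\mathcal{B}_1$ need not land above both nullclines. For example, take $K=L=1$, $\alpha=\beta=3$, $r\mu(t)=10$, $s\mu(t)=0.1$, so $x^*=y^*=0.25$. By \eqref{eq:xy_sigma}, the point $(0.25,0.26)\in\mathcal{B}_1$ is mapped to approximately $(0.2434,0.2598)$. Since $h(0.2434)\approx 0.2522$ and $k(0.2434)\approx 0.2699$, this image lies in $\mathcal{R}_2$, not in $\mathcal{R}_3\cup\mathcal{R}_6$. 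As written, your chaining argument does not terminate.

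The missing idea is that a solution can never return to the boxes once it has left them. The off-diagonal quadrants
$Q^-:=\{0<x<x^*,\ y>y^*\}=\mathcal{R}_1\cup\mathcal{R}_{2_T}\cup\mathcal{R}_3$ and $Q^+:=\{x>x^*,\ 0<y<y^*\}=\mathcal{R}_4\cup\mathcal{R}_{5_T}\cup\mathcal{R}_6$ are positively invariant. You already have the ingredients:
\begin{itemize}
\item In $\mathcal{R}_1$, $y^\sigma\ge y>y^*$ and $\mathcal{L}_k<0$ give $y^*<y^\sigma<k(x^\sigma)$. This forces $x^\sigma<x^*$, because $k$ is decreasing with $k(x^*)=y^*$.
\item In $\mathcal{R}_3$, $x^\sigma\le x<x^*$ and $\mathcal{L}_h>0$ give $y^\sigma>h(x^\sigma)>y^*$.
\item $\mathcal{R}_4$ and $\mathcal{R}_6$ are symmetric.
\end{itemize}
More directly, by \eqref{eq:xy_sigma} $x^\sigma$ is strictly increasing in $x$ and nonincreasing in $y$, $y^\sigma$ the other way round, and both fix $E^*$. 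This version also covers the part $x\ge K$ of $\mathcal{R}_6$, which Lemma~\ref{lem:signLhLkCase2}d) does not. On right-dense stretches the vector field points into $Q^\pm$ across $x=x^*$ and $y=y^*$.

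The same sign information shows that, inside $Q^-$, $\mathcal{R}_1$ only feeds $\mathcal{R}_1\cup\mathcal{R}_{2_T}$ and $\mathcal{R}_3$ only feeds $\mathcal{R}_3\cup\mathcal{R}_{2_T}$; the analogous statement holds in $Q^+$. Together with Proposition~\ref{Prop:boxes}, the box phase is at most $\mathcal{B}_1\to\mathcal{B}_0$, followed by entry into $Q^\pm$ and then at most one more change of region. So the solution eventually stays in the closure of a single component-wise monotone region, and Proposition~\ref{prop:convergence} finishes the proof. This no-return property is exactly what the paper's reduction to initial data outside $\mathcal{B}_0\cup\mathcal{B}_1$ in the proof of Theorem~\ref{thm:Estar_GAS} implicitly relies on.
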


Note that even though the dynamic phase plane gives insights into the global dynamics, we may not be able to pre-determine the equilibrium that solutions converge to as this may depend on the time point the orbit is initiated at. For example, consider \eqref{eq:CompT} with $\beta=\alpha=0.7$, $r=0.5$, $s=0.3$, $K=2$, and $L=\frac{10}{3}$. 
If we initiate the orbit at $t=2^0=1$ with value of $(x(1),y(1))=(2,0.95)$, then the orbit converges to $(0,L)$ implying the extinction of competitor $x$, see  Fig.~\ref{specialcase}a). However, if we had initiated the orbit at $t=2^7=128$, then the same initial value $(x(128),y(128))=(2,0.95)$ would converge to $(K,0)$ implying the extinction of competitor $y$ instead, see Fig.~\ref{specialcase}b).  The different behavior of their limiting behavior, despite the same initial value $(2,0.95)$, is due to the time-dependence of the Root-curves.

\begin{figure}[h!]
    \centering
    \includegraphics[width=0.4\linewidth]{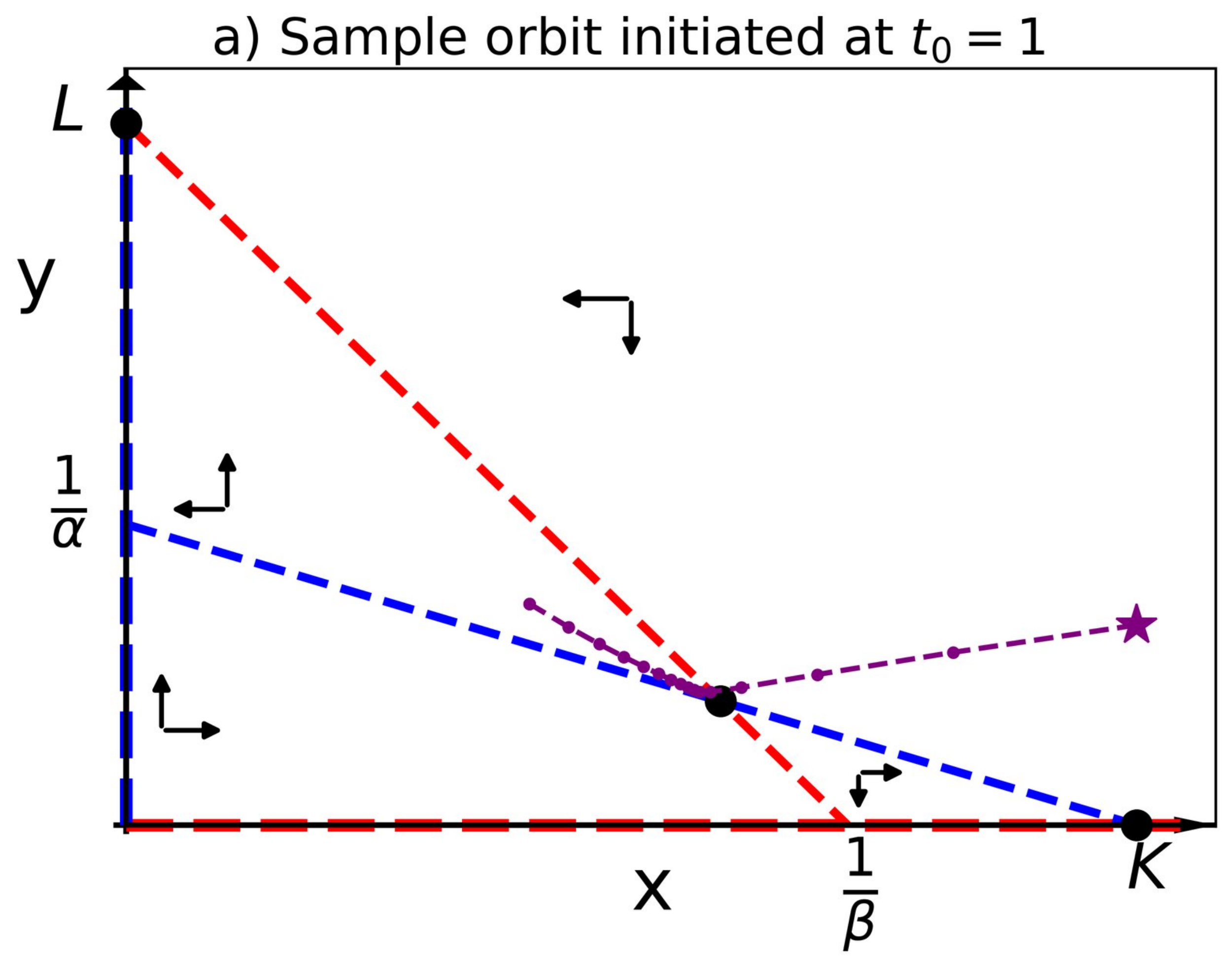}
    \includegraphics[width=0.4\linewidth]{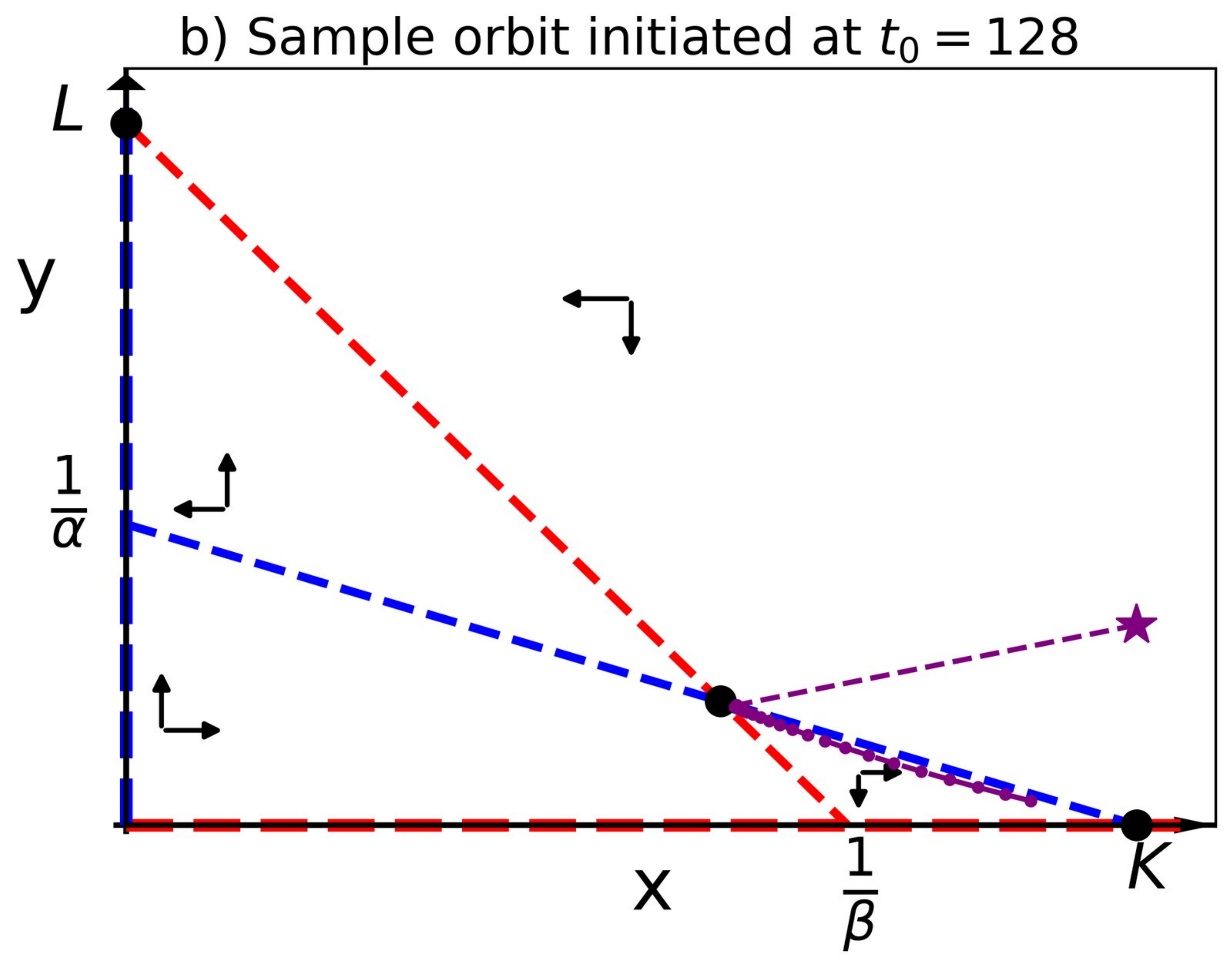}
    \caption{Demonstration that the limiting behavior of an orbit with a fixed initial value may depend on the initial time $t_0$. In a),  $t_0=2^0=1$ and the solution enters $\mathcal{R}_{2T}$ and converges, by Theorem~\ref{thm:Estar_GAS}, to $(0,L)$. In b), $t_0=2^7=128$. Now, the solution enters the positively invariant region $\mathcal{R}_{5T}$ instead and converges, by Theorem~\ref{thm:Estar_GAS}, to $(K,0)$.    }
    \label{specialcase}
\end{figure}

\vspace{3mm}

\subsubsection{Global Dynamics: Infinitely many interior equilibria}

In the case that $\alpha L=1$ and $\beta K=1$,  $h(x)=k(x)$ for all $x$ and all points on  the line segment $y=h(x)$, $0<x<K$ are interior equilibrium points.

\begin{lemma}\label{lem:degen}
   Consider \eqref{eq:CompT} with $\alpha L =1$ and $\beta K=1$.  Then,  
   \begin{itemize}
       \item[a)] $\mathcal{L}_h(t,x,y)=\mathcal{L}_k(t,x,y)$ and
       \item[b)] The Root-sets are independent of $t\in\mathbb{T}$  and satisfy  $\mathcal{R}_h(t)=\mathcal{R}_k(t)=
       \{(x,y):y=h(x)\}=\{(x,y):y=k(x)\}$, i.e., the nullclines and Root-sets all coincide.
   \end{itemize}
\end{lemma}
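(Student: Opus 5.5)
With $\alpha L=1$ and $\beta K=1$ we have $L=1/\alpha$ and $K=1/\beta$, so the nullcline functions in \eqref{eq:NCx}--\eqref{eq:NCy} coincide:
\[
h(x)=\frac{1}{\alpha}\Bigl(1-\frac{x}{K}\Bigr)=L(1-\beta x)=k(x)\quad\mbox{for all } x .
\]
For part a), I will use the compact form of the Root-operators in Definition~\ref{def:Rootop}: $\mathcal{L}_h=y^\sigma-h(x^\sigma)$ and $\mathcal{L}_k=y^\sigma-k(x^\sigma)$. Here $x^\sigma,y^\sigma$ are the same expressions \eqref{eq:xy_sigma} in both cases. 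Since $h\equiv k$, it follows that $\mathcal{L}_h(t,x,y)=\mathcal{L}_k(t,x,y)$ identically. As a cross-check on the explicit formulas, $a_2$ in \eqref{eq:ai} and $b_2$ in \eqref{eq:bi} both vanish because $\alpha L-1=0=1-\beta K$. Also, $\alpha KL\cdot a$-numerator and $L\cdot b$-numerator agree after multiplying through by the constant factors in \eqref{eq:Lh} and \eqref{eq:Lk}. This check is optional.

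For part b), the task is to show that, for each fixed $t$, the zero set of $\mathcal{L}_h(t,\cdot,\cdot)$ is exactly the line $y=h(x)$.

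The key step is to compute the line quantity $y+\alpha x\cdot\frac{1}{\alpha K}\cdot\ldots$; more concretely, set $u:=\frac{x}{K}+\alpha y$, so that $y=h(x)$ if and only if $u=1$. From \eqref{eq:xy_sigma}, using $\alpha L=1$ and $\beta K=1$, we get $\frac{y}{L}+\beta x=\alpha y+\frac{x}{K}=u$. Hence
\[
x^\sigma=\frac{x(1+r\mu)}{1+r\mu u},\qquad y^\sigma=\frac{y(1+s\mu)}{1+s\mu u}.
\]
Therefore
\[
u^\sigma:=\frac{x^\sigma}{K}+\alpha y^\sigma=\frac{x}{K}\,\frac{1+r\mu}{1+r\mu u}+\alpha y\,\frac{1+s\mu}{1+s\mu u},
\]
and $\mathcal{L}_h=y^\sigma-h(x^\sigma)=\frac{1}{\alpha}(u^\sigma-1)$.

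The sign analysis then runs as follows. The function $g(v)=\frac{1+v}{1+vu}$ satisfies $g(v)>1$ when $u<1$, $g(v)=1$ when $u=1$, and $g(v)<1$ when $u>1$, for any $v>0$; for $\mu=0$ we have $g\equiv1$. Since $x,y\ge0$ are not both zero:
\begin{itemize}
\item if $u>1$, then $u^\sigma\ge u\min\{g\}$. This crude bound alone is not sufficient, and it is the main obstacle.
\end{itemize}

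To handle that obstacle, I will argue directly. If $u=1$, then $u^\sigma=u=1$. If $u>1$, I need $u^\sigma>1$. Write
\[
u^\sigma-1=\frac{x}{K}\bigl(g_r-1\bigr)+\alpha y\bigl(g_s-1\bigr)+(u-1),
\]
with $g_r-1=\frac{r\mu(1-u)}{1+r\mu u}$. Then
\[
u^\sigma-1=(u-1)\Bigl[1-\frac{x}{K}\frac{r\mu}{1+r\mu u}-\alpha y\frac{s\mu}{1+s\mu u}\Bigr].
\]
The bracket is positive because $\frac{x}{K}\frac{r\mu}{1+r\mu u}<\frac{x/K}{u}$ and $\alpha y\frac{s\mu}{1+s\mu u}<\frac{\alpha y}{u}$, and these two bounds sum to at most $1$. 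Hence $\operatorname{sign}(\mathcal{L}_h)=\operatorname{sign}(u-1)=\operatorname{sign}(y-h(x))$, independently of $t$. So $\mathcal{R}_h(t)=\mathcal{R}_k(t)=\{y=h(x)\}=\{y=k(x)\}$ for all $t\in\T$ (restricted to the nonnegative quadrant, where the model lives).

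I expect the factorization of $u^\sigma-1$ with the positive bracket to be the crux. Everything else is substitution.
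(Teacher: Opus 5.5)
Your proposal is correct and is essentially the paper's argument: part a) follows from $h\equiv k$ in the definition of the Root-operator, and part b) from writing $\mathcal{L}_h$ as $(\beta x+\alpha y-1)$ times a positive factor, which is exactly what your identity $u^\sigma-1=(u-1)[\cdots]$ accomplishes. The paper simply simplifies your bracket to the closed form $\frac{1+\alpha\mu r y+\beta\mu s x}{(1+r\mu u)(1+s\mu u)}$, which makes positivity immediate on the closed quadrant; this includes the origin, where your comparison bound (which divides by $u$) is unavailable but the bracket equals $1$. With that closed form you can drop the comparison bounds as well as the exploratory ``crude bound'' and ``cross-check'' passages.
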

\begin{proof}
Since $h(x)=k(x)$, by Definition~\ref{def:Rootop}, a) follows immediately. 
To prove b), we note that by \eqref{eq:LhLk}, 
\begin{equation}\label{eq:Lhdegen}
\begin{split}
\mathcal{L}_h(t, x,y)&=\frac{(\beta x + \alpha y-1) (1 + \beta \mu s x + \alpha \mu r y)}{\alpha (1 + \beta \mu r x + \alpha \mu r y) (1 + \beta \mu s x + 
   \alpha \mu s y)},
\end{split}
\end{equation}
which is zero if and only if $\beta x + \alpha y=1$, or, equivalently, $y=\frac{1-\beta x}{\alpha}=h(x)=k(x)$, confirming the claim in b). 
\end{proof}

\begin{lemma}\label{lem:bothsame}
    Consider \eqref{eq:CompT} with $\alpha L =1$ and $\beta K=1$.  Then,  the following hold:
    \begin{itemize}
        \item[a)] $\mathcal{L}_h(t,x,y)>0$ for $y>h(x)$,  $x>0$, and $t\in \T$. 
        \item[b)] $\mathcal{L}_h(t,x,y)<0$ for $y<h(x)$, $x>0$, and $t\in \T$.
    \item[c)]
$\mathcal{L}_k(t,x,y)>0$ for $y>k(x)$, $x>0$, and $t\in \T$.
\item[d)] $\mathcal{L}_k(t,x,y)<0$ for $y<k(x)$,  $x>0$, and $t\in \T$.
\end{itemize}
\end{lemma}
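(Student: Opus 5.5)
The plan is to read off all four sign claims from the closed form \eqref{eq:Lhdegen} obtained in the proof of Lemma~\ref{lem:degen}, and then transfer them to $\mathcal{L}_k$ via Lemma~\ref{lem:degen}a).

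\textbf{Step 1: verify the closed form.} I would first confirm \eqref{eq:Lhdegen} directly from \eqref{eq:LhLk}. With $\alpha L=1$ and $\beta K=1$ we have $1/K=\beta$ and $1/L=\alpha$. Hence the two denominators in \eqref{eq:xy_sigma} become $1+r\mu(\beta x+\alpha y)$ and $1+s\mu(\alpha y+\beta x)$. Write $u:=\beta x+\alpha y$. Then
$$\alpha\,\mathcal{L}_h=\alpha y^\sigma+\beta x^\sigma-1=\frac{\alpha y(1+s\mu)}{1+s\mu u}+\frac{\beta x(1+r\mu)}{1+r\mu u}-1.$$
Putting this over the common denominator, the numerator is
$$\alpha y(1+s\mu)(1+r\mu u)+\beta x(1+r\mu)(1+s\mu u)-(1+s\mu u)(1+r\mu u).$$
Expanding and collecting in powers of $\mu$ should give $(u-1)(1+\mu s\beta x+\mu r\alpha y)$. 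Checking this factorization, which means matching the $\mu^0$, $\mu^1$ and $\mu^2$ coefficients, is the only computational step. I expect it to be the main (though routine) obstacle. If the displayed formula has a typo, I would rederive the correct factorization; the argument only needs its sign structure.

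\textbf{Step 2: sign analysis.} For $x>0$, $y>0$ (and in fact for $x\geq 0$, $y\geq 0$) and any $t\in\T$, we have $\mu(t)\in[0,\infty)$. Every factor in the denominator is then at least $1$, and so is the second factor $1+\mu(s\beta x+r\alpha y)$ in the numerator. Since $\alpha>0$, it follows that $\operatorname{sign}\mathcal{L}_h(t,x,y)=\operatorname{sign}(\beta x+\alpha y-1)$. Because $h(x)=(1-\beta x)/\alpha$, we have $\beta x+\alpha y-1=\alpha(y-h(x))$. This gives a) and b) immediately, uniformly in $t$.

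\textbf{Step 3: parts c) and d).} By Lemma~\ref{lem:degen}a), $\mathcal{L}_k=\mathcal{L}_h$, and in this case $k(x)=L(1-\beta x)=(1-\beta x)/\alpha=h(x)$. So c) and d) follow directly from a) and b). Alternatively, one can check \eqref{eq:Lk} directly: with $\beta K=\alpha L=1$, \eqref{eq:bi} gives $b_2\equiv 0$ and $b_0=KL(y-k(x))$, and $b_1$ should also factor with $(y-k(x))$ times a positive term. This serves as a consistency check, but it is not needed.
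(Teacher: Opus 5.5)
Your proposal is correct and follows the paper's own proof: you read the sign of $\mathcal{L}_h$ off the factored form \eqref{eq:Lhdegen}, whose extra factors are positive for $x,y\geq 0$ (the same implicit restriction the paper uses). You then transfer the result to $\mathcal{L}_k$ via Lemma~\ref{lem:degen}a) and $h=k$. Your Step~1 check goes through: the $\mu^2$ coefficient of the numerator is $rsu(\beta x+\alpha y)-rsu^2=0$ and the $\mu^1$ coefficient equals $(u-1)(s\beta x+r\alpha y)$, so \eqref{eq:Lhdegen} is exactly right.
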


\begin{proof}
    By Lemma~\ref{lem:degen}, $\mathcal{L}_h=\mathcal{L}_k$. Thus, {\it c)} follows immediately from {\it a)} and {\it d)} follows from {\it b)}.  
By \eqref{eq:Lhdegen}, $\mathcal{L}_h(t,x,y)>0$ if and only if $\beta x + \alpha y -1>0$, i.e., $y>\frac{1-\beta x}{\alpha}=h(x)$, confirming {\it a)}. Reversing the inequality proves b).
\end{proof}

\begin{theorem}
     Consider \eqref{eq:CompT} with $\alpha L =1$ and $\beta K=1$. Then any point on $y=h(x)$ for $x\in [0,K]$ is a stable equilibrium. 
\end{theorem}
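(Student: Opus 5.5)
The plan is to prove Lyapunov stability directly. For each equilibrium $(\bar x,\bar y)$ with $\bar y=h(\bar x)$, $\bar x\in[0,K]$, I would trap every nearby solution in a small rectangle whose size is controlled by the initial distance. Two ingredients make this work: closed half-plane invariance, and componentwise monotonicity on each side of the line.

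\emph{Step 1 (equilibria and sign structure).} Since $\alpha L=1$ and $\beta K=1$, both bracketed terms in \eqref{eq:xy_Delta} equal $1-\beta x-\alpha y$. So every point with $\beta x+\alpha y=1$, $x,y\ge 0$, is a constant solution. Moreover, $x^\Delta$ and $y^\Delta$ have the sign of $-(\beta x+\alpha y-1)$ whenever $x,y>0$. Hence on $\Omega_+:=\{(x,y)\in(0,\infty)^2:\ y>h(x)\}$ both components are nonincreasing, and on $\Omega_-:=\{(x,y)\in(0,\infty)^2:\ y<h(x)\}$ both are nondecreasing.

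\emph{Step 2 (invariance of the closed sides).} I would show that $\overline{\Omega}_+$ and $\overline{\Omega}_-$ (intersected with the closed first quadrant) are positively invariant.
\begin{itemize}
\item If $(x(T),y(T))\in\Omega_+$, then Lemma~\ref{lem:bothsame}a) gives $\mathcal{L}_h>0$. By Theorem~\ref{thm:need}, the solution stays strictly above the line on $[T,\sigma(T)]$ if $T$ is right-scattered, and on some $[T,T+\epsilon)$ if $T$ is right-dense.
\item If the solution ever reaches the line, it is at an equilibrium and stays there by uniqueness, exactly as in the proof of Proposition~\ref{prop:nonneg}b).
\end{itemize}
A first-exit-time argument then yields invariance. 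Take the infimum of times at which the solution leaves $\overline{\Omega}_+$. If it is attained at a right-scattered point, Theorem~\ref{thm:need} contradicts the exit. If it is a right-dense point, rd-continuity places the solution in the closed set at that time, and the local statement again contradicts the exit. The axes are handled by Proposition~\ref{prop:nonneg}, and Lemma~\ref{lem:bothsame}b) gives the same argument for $\overline{\Omega}_-$. I expect this step to be the main obstacle, because the argument must be uniform over arbitrary time scales mixing dense and scattered points. If the first-exit argument proves delicate, the fallback is to use only the conclusion that is actually needed, namely that $\beta x(t)+\alpha y(t)-1$ never changes sign.

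\emph{Step 3 (trapping rectangle).} Let $(x_0,y_0)=(x(t_0),y(t_0))$ with $\beta x_0+\alpha y_0\ge 1$. By Steps 1–2, for $t\ge t_0$ we have $x(t)\le x_0$, $y(t)\le y_0$ and $\beta x(t)+\alpha y(t)\ge 1$. Combining these gives
\[
x(t)\ge \frac{1-\alpha y_0}{\beta},\qquad y(t)\ge \frac{1-\beta x_0}{\alpha}.
\]
The solution therefore lies in the rectangle
\[
\Bigl[\max\Bigl\{0,\tfrac{1-\alpha y_0}{\beta}\Bigr\},\,x_0\Bigr]\times\Bigl[\max\Bigl\{0,\tfrac{1-\beta x_0}{\alpha}\Bigr\},\,y_0\Bigr].
\]
Because $1-\alpha\bar y=\beta\bar x$ and $1-\beta\bar x=\alpha\bar y$, this rectangle shrinks to $(\bar x,\bar y)$ as $(x_0,y_0)\to(\bar x,\bar y)$. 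Its diameter is bounded by a constant times $|(x_0,y_0)-(\bar x,\bar y)|$. The case below the line is symmetric, with the inequalities reversed and the upper bounds coming from the line.

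\emph{Step 4 (conclusion).} Given $\varepsilon>0$, choose $\delta>0$ so that the rectangle from Step 3 lies inside the $\varepsilon$-ball whenever the initial distance is below $\delta$. This gives stability of every point on the segment, including the endpoints $(0,L)$ and $(K,0)$, for which the $\max\{0,\cdot\}$ truncation and the invariance of the axes from Proposition~\ref{prop:nonneg} are used. The bound is independent of $t_0$, so the stability is uniform in the initial time.
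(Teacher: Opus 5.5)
Your proposal is correct and is essentially the paper's argument: the line $\beta x+\alpha y=1$ of equilibria cannot be crossed (Lemma~\ref{lem:bothsame} together with Theorem~\ref{thm:need}), and componentwise monotonicity on each side then traps solutions in a small rectangle around the equilibrium. The differences are minor. The paper uses one fixed rectangle with its upper-left and lower-right corners on the nullcline and simply asserts that it is positively invariant; you anchor the rectangle at the initial point, which yields an explicit $\delta(\varepsilon)$, and you supply the first-exit and uniqueness argument for invariance of the closed half-planes, which the paper only states.
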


\begin{proof}
     By \eqref{eq:xy_Delta},
     it follows immediately that for $K=\frac{1}{\beta}$ and $L=\frac{1}{\alpha}$, $x^\Delta = 0$ and $y^\Delta = 0$ if $\alpha y+\beta x=1$. Thus, any point on the nullcline $y=h(x)=k(x)$ is an equilibrium. 

     Furthermore, by Lemma~\ref{lem:bothsame} and Lemma~\ref{lem:degen}, if $y( t_0)>h(x(t_0))$  for some $t_0\in \T$, then $y(t)\geq h(x(t))$ for all $t \in [t_0, \infty)_\T$. Similarly, if $y( t_0)<h(x( t_0))$, then $y(t)\leq h(x(t))$ for all $t \in  [t_0, \infty)_\T$. 
     
       Select an arbitrary equilibrium point $(x^*, h(x^*))$ and   any open set $U$ containing $(x^*, h(x^*))$. 
 There exists $\epsilon>0$ such that  $U_{\epsilon}=\{(x,y)\in (0,\infty)^2 \, \colon \, \|(x,y)-(x^*, h(x^*))\|_{\infty} < \epsilon\} \subset U$.  Take the rectangle $V\subseteq U_{\epsilon}$ such that both its upper left corner and its lower right corner are on the nullcline $y=h(x)$.  Note that if  $x^*=0$ or $x^*=K$, select $U$ containing $(x^*, h(x^*))$, open relative to $[0,\infty]^2$. If $x^*=0$,
     let  $U_{\epsilon}=\{(x,y)\in (0,\infty)\times [0,\infty)\, \colon \, \|(x,y)-(K,0)\|_{\infty} < \epsilon\} \subset U$ and take the rectangle $V\subseteq U_\epsilon$ with its upper left corner on the nullcline and if $x^*=0$, let $U_{\epsilon}=\{(x,y)\in [0,\infty)\times (0,\infty)\, \colon \, \|(x,y)-(0,L)\|_{\infty} < \epsilon\} \subset U$ and take a rectangle $V\subseteq U_\epsilon$ with its lower right corner on the nullcline. (If $K=L$, take $V=U_{\epsilon}$.) From the direction of the component-wise monotonicity, the rectangle $V$ is positively invariant in all cases.
\end{proof}

\section{Conclusion}

In this work, we introduced the time scales analogue of the classical Lotka--Volterra two species competition model. In the special case when the time scale is the continuous time domain, our model is consistent with the well-studied Lotka--Volterra competition model. In the case when the time scale is the discrete time domain, our model is consistent with the discrete analogue of the Lotka--Volterra competition model, also known as Leslie--Gower model. 

We further introduced a dynamic phase plane analysis, which extends the augmented phase plane analysis introduced in \cite{StWo} for the special case of the discrete time domain. Using the dynamic phase plane analysis, we proved the global dynamics of the time scales Lotka--Volterra competition model. More precisely, we showed that if $\alpha L-1$ and $\beta K-1$ have opposite signs, then competitive exclusion applies and only one species persists. More precisely, if $\alpha L>1$ and $\beta K<1$, then competitor $y$ is the sole survivor. Instead, if $\alpha L<1$ and $\beta K>1$, then competitor $x$ is the sole survivor. If $\alpha L-1$ and $\beta K-1$ are both positive, then the system exhibits bistability. Instead, if both are negative, then solutions converge to the unique coexistence equilibrium. In the special case where both terms are zero, we used our dynamic phase plane analysis to show that any point on the nullcline $y=h(x)$ for $x\in [0,K]$ is a stable equilibrium. Thus, we were able to conclude the global dynamics of \eqref{eq:CompT} that turned out to be identical to the conditions known for the continuous and discrete case. This supports the claim that our introduced model \eqref{eq:CompT} is indeed the time scales analogue of the popular two species Lotka--Volterra competition model. 

Note that our dynamic phase plane analysis relies on the signs of the {\it Root-operators}, as defined in Section 2.2, rather than the precise location of the points in the {\it Root-set}. This is particularly useful in the study of time scales models with a non-constant graininess. In that case, the {\it Root-sets}, associated with a nullcline, change in time and are therefore hard to locate in general. Instead, we used that a {\it Root-set}, at any time $t$, was either above or below its corresponding (time-independent) nullcline, allowing us to conclude the sign of the {\it Root-operator} in a large region of the state space. This ultimately led us to provide time-independent information and conclude the global dynamics. Note that the here introduced dynamic phase plane analysis is identical to the classical phase plane analysis in the special case when $\T=\R$, as in this case, the {\it Root-sets} are the same as the nullclines.

Future work may include the application of the introduced method of a dynamic phase plane analysis to other planar time scales models. We remark that the method can also be used to identify regions of oscillatory behavior, similar to the discussion in \cite{StWo} for the special case of $\T=\Z$. To study the stability of periodic orbits, one can also apply the method to compositions of a planar map. Another option is the extension of the method to higher dimensions, albeit their visualization becomes harder beyond three dimensions. Interesting would also be if there exist properties of the dynamic phase plane that  identify chaotic regions of a system.

{\color{black}We also remark that for practicality, if we consider a dynamic equation of the general form $x^\Delta = f(x)$ one might face difficulties actually calculating the solution at a  point $\widehat{t}\in \T$. Take for example an initial condition $x(\widehat{t}_0)=x_0\in \R$ and consider a sequence of isolated points $\{\widehat{t}_j\}_0^\infty \in \T$  such that $\widehat{t}_j<\widehat{t}_{j+1}$ for all $j\in \mathbb{N}$ and $\lim_{j\to \infty}\widehat{t}_j=\widehat{t}$. Then the recursive approach  $x(\widehat{t}_{j+1})=x(\widehat{t}_j)+\mu(\widehat{t}_j)f(x(\widehat{t}_j))$ is failing to provide the value of $x(\widehat{t})$. 
It is an interesting question to explore applications of such a time scale and its dynamic process and numerical methods to implement (approximations) of solutions.

}

\section*{Acknowledgments}
This research was supported in part by grants from
the NSF (DMS-2235451) and Simons Foundation (MPS-NITMB-00005320) to the NSF-Simons
National Institute for Theory and Mathematics in Biology (NITMB). 
GSKW was partially supported by a Natural Science and Engineering Council of Canada (NSERC) Discovery grant (RGPIN-2022-05067).

\appendix
\renewcommand{\thesection}{\Alph{section}}
\renewcommand{\thesubsection}{\Alph{section}.\arabic{subsection}}
\renewcommand{\theequation}{A\arabic{equation}}
\setcounter{equation}{0}

\setcounter{theorem}{0}

\renewcommand{\thetheorem}{A\arabic{theorem}}

\section{Appendix: Preliminaries of Time Scales Theory} \label{sec:appendix}

For the reader unfamiliar with time scales, we provide basic definitions and theorems of time scales and refer the interested reader to \cite{Bohner1} and \cite{Bohner2}. 
A time scale $\mathbb{T}$ is a closed nonempty subset of $\mathbb{R}$. For $t\in \mathbb{T}$, the forward jump operator $\sigma:\mathbb{T}\to \mathbb{T}$ is defined by $\sigma(t)=\inf\{s \in \mathbb{T}:  s \, > t\}$ and for $f\colon \T\to \R$, we define   $f^{\sigma}:=f(\sigma(t))$.   Similarly, a backward jump operator $\rho \colon \mathbb{T} \rightarrow \mathbb{T}$ is defined by $\rho(t)=\sup \{ s \in \mathbb{T} \colon s<t\}$. We define the graininess (or ``stepsize") function $\mu \colon \mathbb{T} \rightarrow \mathbb{R}_0^+$ by $\mu(t)=\sigma(t)-t$. 
 If $\sigma(t)>t$, then we say that $t$ is right-scattered  and if $\sigma(t)=0$ we say that $t$ is right dense. If $\rho(t)<t$,  we say that $t$ is left-scattered and if $\rho(t)=0$, we say that $t$ is left dense.  

A function $p:\mathbb{T} \to \mathbb{R}$ is called rd-continuous provided $p$ is continuous at $t$ for all right-dense points $t$ and the left-sided limit exists for all left-dense points $t$. The set of real-valued rd-continuous functions $f \colon \mathbb{T} \rightarrow \mathbb{R}$ is denoted by $C_{\rm rd} = C_{\rm rd}(\mathbb{T}) = C_{\rm rd}(\mathbb{T}, \mathbb{R})$. We say that $p$ is regressive if for all $t \in \mathbb{T}$, $1+\mu(t)p(t) \neq 0$. The set of real-valued  regressive and rd-continuous functions is denoted by $\mathcal{R}=\mathcal{R}(\mathbb{T})=\mathcal{R}(\mathbb{T},\mathbb{R})$. We denote the set of functions $f\in \mathcal{R}$ such that $1+\mu(t)p(t)>0$ for all $t\in \mathbb{T}^\kappa$, where $\mathbb{T}^\kappa=\T$ if $\T$ is unbounded and $\mathbb{T}^\kappa=\T\backslash\{T_M\}$ if $\max \T=T_M$, by $\mathcal{R}^+$ and refer to it as ``positively regressive''.

The ``circle plus" addition on $\mathcal{R}$ is defined by $\left(p \oplus q\right)(t) =p(t) + q(t) + \mu(t) p(t) q(t)$, and the ``circle minus" subtraction is $\left(p \ominus q\right)(t) = \frac{p(t)-q(t)}{1+\mu(t) q(t)}$.

The $\Delta$-derivative on  time scales for $t\in \T^\kappa$, where $\T^\kappa=\T$  and $\T^\kappa=\T\backslash\{T_M\}$, if $\T$ has a left-scattered maximum, is defined as follows.  
\begin{definition}[See \protect{\cite[Definition 1.10]{Bohner1}}]
    Assume $f\colon \T\to \R$ and let $t\in \mathbb{T}^\kappa$. Then, $f^\Delta(t)$ is the number (provided it exists) such that for any $\epsilon>0$, there exists $\delta>0$ such that 
    $$|f(\sigma(t))-f(s)-f^\Delta (t) [\sigma(t)-s]|\leq \epsilon |\sigma(t)-s|,$$
    for all $s\in U=(t-\delta, t+\delta)\cap \T$. The value $f^\Delta(t)$ is called the delta derivative of $f$ at $t$.
\end{definition}

\begin{remark}\label{rem:fsigma}
    For a differentiable function $f$, $f^\sigma=f+\mu f^\Delta$ (see \cite{Bohner1}).
\end{remark}

\begin{theorem}[See \protect{\cite[Theorem~1.70]{Bohner1}}]\label{Thm:anti}
Let $f$ be regulated. Then there exists a function $F$ which is pre-differentiable with region of differentiation $D$ such that 
$F^\Delta(t)=f(t)$ for all $t\in D$. 
\end{theorem}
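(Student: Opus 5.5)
The plan is to reduce the statement to the classical fact that a regulated function on an interval of $\R$ has an indefinite integral. I would fix $t_0\in\T$, extend $f$ from $\T$ to $\R$ so that the extension is constant on each gap of $\T$, and take $F$ to be the Lebesgue (equivalently Riemann) integral of the extension. The region of differentiation $D$ will be $\T^\kappa$ with the right-dense discontinuity points of $f$ removed.

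\textbf{Step 1: extension to $\R$.} The complement $\R\setminus\T$ is open, so it is a countable union of disjoint open intervals. Each bounded component has the form $(t,\sigma(t))$ with $t\in\T$ right-scattered. Define $\widetilde f(s)=f(t)$ for $s\in[t,\sigma(t))$ and $\widetilde f=f$ on $\T$. On an unbounded component below $\min\T$, set $\widetilde f$ equal to $f(\min\T)$; no other unbounded component occurs, since $\T$ is unbounded above.

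\textbf{Step 2: $\widetilde f$ is regulated on $\R$.} One-sided limits of $\widetilde f$ exist at every real point. On open gaps this is trivial because $\widetilde f$ is constant there. At points of $\T$ it follows from $f$ being regulated: at a dense side the limit is the limit along $\T$, and at a scattered side $\widetilde f$ is constant nearby. A regulated function on a compact interval is a uniform limit of step functions. Hence $\widetilde f$ is locally bounded and integrable, and it has at most countably many discontinuities. Define
\[
F(t)=\int_{t_0}^{t}\widetilde f(s)\,ds ,\qquad t\in\T .
\]
Then $F$ is continuous on $\T$.

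\textbf{Step 3: differentiability.}
\begin{itemize}
\item[(a)] \emph{Right-scattered $t$.} Here $F(\sigma(t))-F(t)=\int_t^{\sigma(t)}\widetilde f=\mu(t)f(t)$. Continuity of $F$ at $t$ together with Theorem~1.16 of \cite{Bohner1} gives $F^\Delta(t)=\bigl(F(\sigma(t))-F(t)\bigr)/\mu(t)=f(t)$. So every right-scattered point lies in $D$.
\item[(b)] \emph{Right-dense $t$ at which $f$ is continuous as a function on $\T$.} I claim $F^\Delta(t)=f(t)$. For $s\in\T$ close to $t$, write $F(t)-F(s)-f(t)(t-s)=\int_s^t(\widetilde f-f(t))$.
  \begin{itemize}
  \item[$\bullet$] For $s>t$: $\widetilde f$ on $[t,s]$ only takes values $f(u)$ with $u\in\T\cap[t,s]$, so the integrand is small by continuity of $f$ along $\T$.
  \item[$\bullet$] For $s<t$ with $t$ left-scattered: choose $\delta<t-\rho(t)$, so no such $s$ occurs in the neighbourhood.
  \item[$\bullet$] For $s<t$ with $t$ left-dense: the same estimate works using the left limit along $\T$, which equals $f(t)$ by continuity.
  \end{itemize}
\end{itemize}
Therefore the exceptional set $T^\kappa\setminus D$ consists only of right-dense points where $f$ is discontinuous. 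It contains no right-scattered points, as required in the definition of pre-differentiable.

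\textbf{Main obstacle.} The delicate step is proving that this exceptional set is countable. I would prove it by decomposing the discontinuities of a regulated function $f$ into the sets $\{t:|f(t^+)-f(t)|>1/n\}$ and the analogous sets for left limits. For each $n$, such a set is finite on every compact interval; otherwise an accumulation point would violate existence of a one-sided limit there. A countable union over $n$ and over compact intervals then gives countability. A second point needing care is matching the $\epsilon$–$\delta$ definition of the delta derivative at points that are right-dense but left-scattered, as handled in (b).
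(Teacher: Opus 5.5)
Your proof is correct. It takes a different route from the one behind the statement, though there is no proof in the paper to compare against directly: the paper only cites \cite[Theorem~1.70]{Bohner1}. In that source the result belongs to an intrinsic development on $\T$. That development uses the mean value theorem for pre-differentiable functions (proved with the induction principle) and a theorem on uniform limits of pre-differentiable functions. Into these one feeds uniform approximations of $f$ by functions whose pre-antiderivatives are obvious.

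You instead move the problem to $\R$, in four steps:
\begin{enumerate}
\item The gap-wise constant extension $\widetilde f$ is regulated on $\R$.
\item Its ordinary integral, restricted to $\T$, is continuous.
\item The identity $F(\sigma(t))-F(t)=\mu(t)f(t)$ settles right-scattered points via \cite[Theorem~1.16]{Bohner1}.
\item At right-dense points of continuity, the key observation is this: on the real interval between $s\in\T$ and $t$, the extension only takes values $f(u)$ with $u\in\T$ between $s$ and $t$. The $\epsilon$--$\delta$ estimate follows at once, on either side.
\end{enumerate}

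Your route is shorter and gives more. It yields the explicit formula $\int_r^s f(\tau)\,\Delta\tau=\int_r^s\widetilde f(\tau)\,d\tau$. It also identifies $D$ explicitly: all right-scattered points together with the right-dense points of continuity of $f$. The price is importing classical integration of regulated functions on $\R$. The intrinsic route stays entirely inside time-scales calculus, and it produces the uniqueness-up-to-constants and uniform-convergence results along the way.

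Two small remarks.
\begin{itemize}
\item The step you flag as the main obstacle is already settled by your Step~2. A discontinuity of $f$ along $\T$ at $t$ is automatically a discontinuity of $\widetilde f$ on $\R$, since $f$ is the restriction of $\widetilde f$. Step~2 shows $\widetilde f$ has only countably many discontinuities, so the separate $1/n$-decomposition argument, though valid, is redundant.
\item Your appeal to unboundedness above is fine under the paper's standing assumption. For a general time scale nothing changes: you only ever integrate $\widetilde f$ between points of $\T$, so it suffices to define it on $[\inf\T,\sup\T]$.
\end{itemize}
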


In this case, $F$ is called the pre-antiderivative which can be used to formulate a time scales analogue of the Fundamental Theorem of Calculus. 

\begin{definition}[See \protect{\cite[Definition~1.71]{Bohner1}}]\label{def:Cauchy}
    Assume $f\colon \T\to \R$ is a regulated function. Any function $F$ as in Theorem~\ref{Thm:anti} is called a pre-antiderivatve of $f$. We define the indefinite integral of a regulated function $f$ by 
    $$\int f(t)\Delta t = F(t)+C,$$
    where $C$ is an arbitrary constant and $F$ s a pre-antiderivative of $f$. The Cauchy integral is then defined by 
    $$\int_r^s f(\tau)\, \Delta \tau = F(s)-F(r)$$
    for all $r,s\in \T$.
\end{definition}

If $p \in \mathcal{R}$ and $t_0 \in \mathbb{T}$, then the initial value problem
\begin{equation*}
y^{\Delta}=p(t)y, \quad y(t_0)=1,
\end{equation*}
 possesses a unique solution, called the dynamic exponential function, denoted by $e_p(t,t_0)$.

The exponential function can be calculated, for $p\in \mathcal{R}$, from the fact that  
 $$e_p(t,s)=\begin{cases}
\exp\left(\int_s^t \,\frac{ {\rm Log}(1+\mu(\tau) p(\tau)) }{\mu(\tau)} \, \Delta \tau \right)& \mbox{if} \quad \mu(\tau)\neq 0,\\
\exp\left(\int_s^t \, p(\tau) \, \Delta \tau \right) & \mbox{if} \quad \mu(\tau)=0.
\end{cases}$$

Some useful properties of the exponential function are given below, see \cite[Theorem~2.36 \& 2.44]{Bohner1}. 
\begin{remark}\label{rem:exp_Properties} 
If $p, q\in \mathcal{R}$, then 
\begin{itemize}
\item[a)] $e_0(t,s)=1$ and $e_p(t,t)=1$
\item[b)] $e_p(\sigma(t),s)=(1+\mu(t)p(t))e_p(t,s)$
\item[c)] $e_{\ominus p}(t,t_0)= \frac{1}{e_p(t,t_0)}=e_p(t_0,t).$
\item[d)] $e_p(t,s)e_p(s,r)=e_p(t,r)$
\item[e)] $e_p(t,s)e_q(t,s)=e_{p\oplus q}(t,s)$
\item[f)] If $p\in \mathcal{R}^+$, then $e_p(t,t_0)>0$ for all $t\in \T$.
\end{itemize}
\end{remark}

\begin{theorem}[See \protect{\cite[Theorem~2.39]{Bohner1}}]\label{thm:int_exp}
    If $p\in \mathcal{R}$ and $a, b, c, \in \T$, then
    \begin{equation}\label{eq:integrale}
\int_a^b \, p(t) e_p(c,\sigma(t)) \Delta t= e_p(c,a)-e_p(c,b).
\end{equation}
\end{theorem}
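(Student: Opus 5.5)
The plan is to exhibit an explicit antiderivative of the integrand $p(t)e_p(c,\sigma(t))$ and then evaluate the Cauchy integral of Definition~\ref{def:Cauchy} as a difference of that antiderivative's values. The candidate is $F(t):=-e_p(c,t)$, viewed as a function of its second argument with $c$ fixed. The whole proof reduces to verifying
\[
F^\Delta(t)=p(t)\,e_p(c,\sigma(t)),\qquad t\in\T^\kappa .
\]
Given this, Definition~\ref{def:Cauchy} yields $\int_a^b p(t)e_p(c,\sigma(t))\,\Delta t=F(b)-F(a)=e_p(c,a)-e_p(c,b)$, which is \eqref{eq:integrale}.

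To compute $F^\Delta$, first use Remark~\ref{rem:exp_Properties}c) to rewrite the variable second argument as a first argument: $e_p(c,t)=e_{\ominus p}(t,c)$. Since $p\in\mathcal{R}$, we also have $\ominus p=\frac{-p}{1+\mu p}\in\mathcal{R}$, so $e_{\ominus p}(\cdot,c)$ is the unique solution of $y^\Delta=(\ominus p)(t)y$ with $y(c)=1$. Hence
\[
\bigl(e_p(c,\cdot)\bigr)^\Delta(t)=(\ominus p)(t)\,e_{\ominus p}(t,c)=-\frac{p(t)}{1+\mu(t)p(t)}\,e_p(c,t).
\]
Next, apply Remark~\ref{rem:exp_Properties}b) to $\ominus p$, noting that $1+\mu(\ominus p)=\frac{1}{1+\mu p}$. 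This gives
\[
e_p(c,\sigma(t))=e_{\ominus p}(\sigma(t),c)=\frac{e_p(c,t)}{1+\mu(t)p(t)}.
\]
Combining the two displays gives $\bigl(e_p(c,\cdot)\bigr)^\Delta(t)=-p(t)e_p(c,\sigma(t))$, which is the required formula for $F^\Delta$.

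The main point needing care is justifying the use of the Cauchy integral, that is, that the integrand is regulated so that Definition~\ref{def:Cauchy} applies and $F$ is a genuine antiderivative. This holds because $p\in C_{\rm rd}$ and $t\mapsto e_p(c,\sigma(t))$ equals $e_p(c,t)/(1+\mu(t)p(t))$. That function is a product of the differentiable (hence continuous, hence rd-continuous) function $e_p(c,\cdot)$ with the rd-continuous function $1/(1+\mu p)$. The latter is rd-continuous because $\mu$ is rd-continuous and $1+\mu p\neq0$ by regressivity. The integrand is therefore rd-continuous, hence regulated. Since $F$ is differentiable on all of $\T^\kappa$, it serves as a pre-antiderivative with $D=\T^\kappa$, and the evaluation at $a$ and $b$ is legitimate. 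The case $\mu(t)=0$ requires no separate treatment, because the formulas above reduce to $(e_p(c,t))'=-p(t)e_p(c,t)$.
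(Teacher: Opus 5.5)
Your proof is correct and follows essentially the same route as the standard proof behind the cited result (the paper itself gives no proof beyond the citation to Bohner--Peterson): first establish $\bigl(e_p(c,\cdot)\bigr)^\Delta=-p\,e_p(c,\sigma(\cdot))$ from the basic exponential identities, then evaluate the Cauchy integral using the antiderivative $-e_p(c,\cdot)$. One small streamlining: the integrand is regulated simply because $e_p(c,\sigma(\cdot))=\bigl(e_p(c,\cdot)\bigr)^\sigma$ is rd-continuous as the shift of a continuous function, which avoids your slightly under-justified step that $1/(1+\mu p)$ is rd-continuous merely because $1+\mu p\neq 0$ pointwise.
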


The exponential function can also be used to solve a linear, nonhomogeneous, dynamic equation. 

\begin{theorem}[Variation of Constants, see \protect{\cite[Theorem 2.1]{Bohner1}}]\label{thm:VoC}
Assume that  $p \in \mathcal{R}$ and $f \in  C_{\rm rd}.$ Let $t_0 \in \mathbb{T}$ and $y_0 \in \mathbb{R}$. The unique solution of the IVP
\begin{equation*}
y^{\Delta} = p(t)y+f(t), \quad y(t_0)=y_0,
\end{equation*}
is given by 
\begin{equation}\label{VoC}
y(t)=e_p(t,t_0)y_0 + \int_{t_0}^{t} e_p(t,\sigma(s))f(s)\, \Delta s.
\end{equation}\label{variationofconstants}
\end{theorem}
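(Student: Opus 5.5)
The plan is to verify directly that \eqref{VoC} solves the initial value problem, and then to show uniqueness through an exponential integrating factor. The key device is to pull the $t$-dependence out of the integrand, so that we never have to differentiate an integral whose integrand depends on $t$. By Remark~\ref{rem:exp_Properties}\,c) and d), $e_p(t,\sigma(s))=e_p(t,t_0)\,e_p(t_0,\sigma(s))$. Hence \eqref{VoC} can be rewritten as
$$y(t)=e_p(t,t_0)\,g(t),\qquad g(t):=y_0+\int_{t_0}^{t} e_p(t_0,\sigma(s))f(s)\,\Delta s .$$
Before differentiating, I will check that the integrand $s\mapsto e_p(t_0,\sigma(s))f(s)$ is rd-continuous. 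The function $e_p(t_0,\cdot)=e_{\ominus p}(\cdot,t_0)$ is differentiable, $\sigma$ is rd-continuous, and $f\in C_{\rm rd}$, so the standard closure properties of rd-continuous functions give this. Consequently the integrand has an antiderivative, and we get $g^\Delta(t)=e_p(t_0,\sigma(t))f(t)$ and $g(t_0)=y_0$.

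Next I will apply the time scales product rule $(uv)^\Delta=u^\Delta v+u^\sigma v^\Delta$ with $u=e_p(\cdot,t_0)$ and $v=g$. Since $u^\Delta=p\,u$, this gives
$$y^\Delta(t)=p(t)e_p(t,t_0)g(t)+e_p(\sigma(t),t_0)\,e_p(t_0,\sigma(t))f(t)=p(t)y(t)+f(t).$$
Here I use Remark~\ref{rem:exp_Properties}\,c) and d) once more to get $e_p(\sigma(t),t_0)e_p(t_0,\sigma(t))=1$. The initial condition follows from $e_p(t_0,t_0)=1$ and the fact that the integral over $[t_0,t_0]$ is empty.

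For uniqueness, let $z$ be any other solution and set $w=y-z$. Then $w^\Delta=p\,w$ and $w(t_0)=0$. Regressivity of $p$ guarantees $e_p(t,t_0)\neq0$, so I can define $q(t)=w(t)/e_p(t,t_0)$. The quotient rule gives
$$q^\Delta=\frac{w^\Delta e_p-w\,e_p^\Delta}{e_p\,e_p^\sigma}=\frac{p w e_p-w p e_p}{e_p e_p^\sigma}=0.$$
By the time scales mean value theorem, a function with zero delta derivative is constant, so $q\equiv q(t_0)=0$ and therefore $w\equiv0$. This argument works on both sides of $t_0$, because $e_p(\cdot,t_0)$ is defined on all of $\T$ for $p\in\mathcal{R}$. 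I expect the main obstacle to be the regularity bookkeeping rather than the algebra: specifically, justifying that $e_p(t_0,\sigma(\cdot))f$ is rd-continuous, so that the integral is a genuine antiderivative, and invoking the correct versions of the product and quotient rules at right-scattered points. The factorization trick above is meant to sidestep any Leibniz-type rule for integrands that depend on $t$.
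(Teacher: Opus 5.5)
Your proof is correct and complete. The paper gives no proof of this statement; it quotes it from Bohner--Peterson as a background tool for the appendix. The natural comparison is therefore with the standard textbook argument.

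That argument uses $y^\sigma=y+\mu y^\Delta$ to rewrite $y^\Delta=p(t)y+f(t)$ as $y^\Delta=-(\ominus p)(t)\,y^\sigma+\frac{f(t)}{1+\mu(t)p(t)}$. It then multiplies by the integrating factor $e_{\ominus p}(t,t_0)$, so the product rule turns the left-hand side into the exact derivative $\bigl(e_{\ominus p}(\cdot,t_0)\,y\bigr)^\Delta$. Integrating from $t_0$ to $t$ shows that \emph{every} solution must equal the formula, and the converse is a direct check.

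Your $g$ is exactly $e_{\ominus p}(t,t_0)\,y(t)$, because $e_{\ominus p}(\tau,t_0)/(1+\mu(\tau)p(\tau))=e_p(t_0,\sigma(\tau))$. So both proofs rest on the same identity, and the difference is organizational:
\begin{itemize}
\item You run it forwards, using the product rule to verify the formula, and then prove uniqueness separately for the homogeneous difference $w$ via the quotient rule.
\item The integrating-factor derivation gets uniqueness for the inhomogeneous problem in one stroke.
\end{itemize}
You could merge your two halves the same way. For an arbitrary solution $y$, the quotient rule gives $\bigl(y/e_p(\cdot,t_0)\bigr)^\Delta=f(t)\,e_p(t_0,\sigma(t))=g^\Delta(t)$, and your mean-value argument then yields $y=e_p(\cdot,t_0)\,g$ directly. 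What your factorization buys is that you never need the Leibniz rule for $\Delta$-differentiating an integral whose integrand depends on $t$. That rule is the other common way to verify the formula directly.

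One small tightening concerns the rd-continuity of $s\mapsto e_p(t_0,\sigma(s))$. The textbook composition rule is stated for outer functions defined on $\R$, not on $\T$. It is cleaner to cite either of the following:
\begin{itemize}
\item the fact that $u^\sigma$ is rd-continuous whenever $u$ is;
\item the identity $e_p(t_0,\sigma(s))=e_p(t_0,s)/(1+\mu(s)p(s))$, a product of rd-continuous factors with a nonvanishing denominator.
\end{itemize}
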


\begin{theorem}[Gronwall's Inequality, see 
\protect{\cite[Theorem 6.1]{Bohner1}}]\label{Thm:Gronwall} 
Let $y, f \in C_{{\rm rd}}$ and $p\in \mathcal{R}^+$.  Then 
$$ y^{\Delta}(t)\leq p(t) y(t) + f(t) \quad \mbox{for all} \quad t \in \T$$
implies that if $t_0\in \T$
$$ y(t)\leq y(t_0)e_p(t,t_0) + \int_{t_0}^t \, e_p(t,\sigma(\tau))f(\tau) \Delta \tau \quad \mbox{for all} \quad t \in \T.$$
\end{theorem}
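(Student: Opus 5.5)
The plan is the standard integrating-factor argument, carried out with the time scales exponential. I read the conclusion as intended for $t\in[t_0,\infty)_\T$: integrating a delta inequality forward preserves its direction, but integrating backward reverses it. Set
$$w(t):=y(t)\,e_{\ominus p}(t,t_0)=\frac{y(t)}{e_p(t,t_0)},$$
using Remark~\ref{rem:exp_Properties}c). Since $p\in\mathcal{R}^+$, $\ominus p=\frac{-p}{1+\mu p}$ satisfies $1+\mu(\ominus p)=\frac{1}{1+\mu p}>0$. Hence $\ominus p\in\mathcal{R}^+$, and by Remark~\ref{rem:exp_Properties}f) both $e_p(\cdot,t_0)$ and $e_{\ominus p}(\cdot,t_0)$ are strictly positive. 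This positivity is what lets us multiply inequalities without flipping them.

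First I compute $w^\Delta$ with the time scales product rule $(uv)^\Delta=u^\Delta v^\sigma+u v^\Delta$. Using Remark~\ref{rem:exp_Properties}b) in the form $e_{\ominus p}(\sigma(t),t_0)=\frac{e_{\ominus p}(t,t_0)}{1+\mu(t)p(t)}$, together with $(\ominus p)(t)=\frac{-p(t)}{1+\mu(t)p(t)}$, the two terms combine to
$$w^\Delta(t)=e_{\ominus p}(\sigma(t),t_0)\bigl(y^\Delta(t)-p(t)y(t)\bigr)\le e_{\ominus p}(\sigma(t),t_0)\,f(t).$$
The inequality uses the hypothesis and the positivity of $e_{\ominus p}(\sigma(t),t_0)$. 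The bookkeeping here is routine but is the step to check carefully. The essential point is that the factor multiplying $y$ is evaluated at $\sigma(t)$, so the bracket is exactly $y^\Delta-py$.

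Next I integrate from $t_0$ to $t\ge t_0$. The right-hand side is rd-continuous, being a product of rd-continuous functions, so it has an antiderivative by Theorem~\ref{Thm:anti}. I use that the Cauchy integral is monotone over forward intervals: if $g^\Delta\le h$ on $[t_0,t)_\T$, then $g(t)-g(t_0)\le\int_{t_0}^t h\,\Delta\tau$. This is the one place an external fact from \cite{Bohner1} is needed; it follows from the mean value inequality on time scales, or equivalently from nonnegativity of the integral of a nonnegative rd-continuous function. Since $w(t_0)=y(t_0)$, this gives
$$\frac{y(t)}{e_p(t,t_0)}\le y(t_0)+\int_{t_0}^t e_{\ominus p}(\sigma(\tau),t_0)f(\tau)\,\Delta\tau.$$

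Finally I multiply by $e_p(t,t_0)>0$. By Remark~\ref{rem:exp_Properties}c),d),
$$e_p(t,t_0)\,e_{\ominus p}(\sigma(\tau),t_0)=e_p(t,t_0)\,e_p(t_0,\sigma(\tau))=e_p(t,\sigma(\tau)),$$
which yields the stated bound. The expected obstacle is not the algebra but the domain issue: the monotonicity of the integral holds only for $t\ge t_0$. I would therefore state the conclusion on $[t_0,\infty)_\T$, which is the only range in which the paper uses it, namely in the boundedness proof of Theorem~\ref{thm:bound}.
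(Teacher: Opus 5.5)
Your proof is correct and is precisely the standard argument behind the cited \cite[Theorem~6.1]{Bohner1}, which is all the paper offers for this statement. That argument multiplies by the positive factor $e_{\ominus p}(\cdot,t_0)$, uses the product rule to obtain $\bigl[y\,e_{\ominus p}(\cdot,t_0)\bigr]^\Delta\le f\,e_{\ominus p}(\sigma(\cdot),t_0)$, integrates forward, and recombines the exponentials by the semigroup property; of the two justifications you give for the integration step, rely on the mean-value one, since it does not need $y^\Delta$ to be rd-continuous. Your restriction to $t\in[t_0,\infty)_\T$ is also right: the literal ``for all $t\in\T$'' fails for $t<t_0$ (take $\T=\R$, $p=f=0$, $y(t)=-t$, $t_0=0$, $t=-1$), and only $t\ge t_0$ is used in the proof of Theorem~\ref{thm:bound}.
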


\begin{lemma}[See 
\protect{\cite[Lemma 2]{MR2145447}}]\label{Lem2in1} 

For nonnegative $p\colon \T\to \R$ with $-p\in \mathcal{R}^+$, we have the following inequalities:
\begin{equation*}
    1-\int_{t_0}^t p(s)\, \Delta s \, \leq \, e_{-p}(t,t_0)\, \leq \, \exp\left\{\int_{t_0}^t p(s)\, \Delta s\right\}, \qquad \qquad \forall\, t\geq t_0. 
\end{equation*}

\end{lemma}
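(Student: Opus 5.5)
The plan is to work directly with $u(t):=e_{-p}(t,t_0)$ for $t\in[t_0,\infty)_\T$. By definition $u$ is the unique solution of $u^\Delta=-p(t)u$, $u(t_0)=1$. Since $-p\in\mathcal{R}^+$, Remark~\ref{rem:exp_Properties}f) gives $u(t)>0$ for all $t\in\T$. Then $u^\Delta(t)=-p(t)u(t)\le 0$ because $p\ge 0$. Hence $u$ is nonincreasing on $[t_0,\infty)_\T$: at right-scattered points this follows from $u^\sigma=u+\mu u^\Delta\le u$ (Remark~\ref{rem:fsigma}), and globally from the time scales mean value/monotonicity theorem in \cite{Bohner1}. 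In particular $0<u(t)\le u(t_0)=1$ for all $t\ge t_0$.

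\emph{Upper bound.} Since $p\ge 0$, monotonicity of the Cauchy integral (Definition~\ref{def:Cauchy}) gives $\int_{t_0}^t p(s)\,\Delta s\ge 0$ for $t\ge t_0$. Therefore
\[
e_{-p}(t,t_0)=u(t)\le 1\le \exp\Big\{\int_{t_0}^t p(s)\,\Delta s\Big\}.
\]
If the intended statement is the sharper bound $\exp\{-\int_{t_0}^t p\,\Delta s\}$, I would instead use the explicit representation of $e_{-p}$ given above Remark~\ref{rem:exp_Properties}. The key ingredient is the pointwise inequality
\[
\frac{{\rm Log}(1-\mu(\tau)p(\tau))}{\mu(\tau)}\le -p(\tau),
\]
which follows from $\log(1-z)\le -z$ for $z\in[0,1)$ when $\mu(\tau)>0$, and holds with equality when $\mu(\tau)=0$. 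Integrating this inequality and using monotonicity of $\exp$ gives the claim.

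\emph{Lower bound.} By the fundamental theorem of calculus for the Cauchy integral,
\[
u(t)=1+\int_{t_0}^t u^\Delta(s)\,\Delta s=1-\int_{t_0}^t p(s)u(s)\,\Delta s.
\]
Since $0\le p(s)u(s)\le p(s)$ on $[t_0,t)_\T$ (because $0<u\le 1$ and $p\ge 0$), monotonicity of the integral yields
\[
u(t)\ge 1-\int_{t_0}^t p(s)\,\Delta s,
\]
as required.

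The main obstacle is justifying the two analytic facts on a general time scale: that $u^\Delta\le 0$ implies $u$ is nonincreasing, and that the Cauchy integral is monotone for rd-continuous integrands (so that $pu$ and $p$ are legitimately integrable). Both are standard in \cite{Bohner1} (Corollary~1.16/Theorem~1.77 type results), so I would cite them rather than reprove them, noting that $p\in C_{\rm rd}$ is implicit in $-p\in\mathcal{R}^+$.
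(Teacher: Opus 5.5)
Your proof is correct. The paper gives no proof of this lemma; it quotes it from \cite[Lemma~2]{MR2145447}. Your argument is the standard one for this estimate: the identity $e_{-p}(t,t_0)=1-\int_{t_0}^t p(s)e_{-p}(s,t_0)\,\Delta s$ together with $0<e_{-p}(s,t_0)\le 1$ for the lower bound, and the cylinder-transformation estimate $\frac{1}{\mu}\log(1-\mu p)\le -p$ for the upper bound.

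Your hedged ``sharper bound'' paragraph deserves more weight than you give it. The upper bound as displayed, $\exp\{+\int_{t_0}^t p(s)\,\Delta s\}$, follows trivially from $e_{-p}\le 1$. What the paper actually invokes in Appendix~\ref{A:Proofbnd}, in the proof of Theorem~\ref{thm:bound}, is $e_{-p}(t,t_0)\le \exp\{-\int_{t_0}^t p(s)\,\Delta s\}$. So the displayed exponent is evidently missing a minus sign, and your $\log(1-z)\le -z$ argument is the part that is really needed. Present it as the main upper-bound proof rather than as a contingency.
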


The following theorem follows directly from the proof of \protect{\cite[Theorem 1.76]{Bohner1}}.

\begin{theorem}\label{thm:monotonicity} 
   Assume that  $g\in  C_{{\rm rd}}$ and $g:\T \to \mathbb{R}$.  If there exist $T_1< T_2 \in  \T$ such that $g^{\Delta} > 0 (<0)$  for all $t\in [T_1,T_2]\cap \T$, then 
    $g$ is strictly increasing (decreasing) for all $t \in [T_1,T_2]\cap \T$.  
\begin{proof} This follows using the proof  of Theorem 1.76 in \cite{Bohner1}.
\end{proof}
    
    % \in \T$ with  $t_0\leq T_1<T_2$ such that $g^{\Delta} > 0 (<0)$ for all $t\geq \T$, then $g$ is strictly increasing (decreasing) for all $t \in(T_1,T_2)$, t \in \T$.    
\end{theorem}

\begin{remark}  We refer to  $g$ as component-wise monotonic for $t\in[T_1,T_2] \cap \T$ if  $g$ satisfies Theorem~\ref{thm:monotonicity} for $t \in [T_1,T_2] \cap \T$ with the obvious implications for the direction field of a two dimensional  phase plane.  
\end{remark}

Using the Gronwall inequality and properties of the exponential function, we can easily show the following result. 

\begin{lemma}\label{lem:convergence} 
Assume $\T$ is unbounded above and let $\epsilon >0$. If there exists $T \in \T$ such that 
    $z^\Delta(t) \geq \epsilon$ for all   $t>T$, then $\lim_{t \to \infty, t\in \T} z(t) = \infty$ and 
 if there exists $T\in\T$ such that  $w^\Delta(t) \leq -\epsilon$ for all  $t>T$, then 
 $\lim_{t \to \infty, t\in \T} w(t) = -\infty$. 
 \end{lemma}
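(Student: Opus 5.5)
The idea is to integrate the lower bound on the derivative, reducing everything to the time scales Fundamental Theorem of Calculus. Fix $t\in\T$ with $t>T$ and take $t_1\in\T$, $t_1>T$ (possible since $\T$ is unbounded above). Because $z$ is delta-differentiable, the function $z$ is a pre-antiderivative of $z^\Delta$. By Definition~\ref{def:Cauchy}, for $t\geq t_1$ this gives
$$z(t)=z(t_1)+\int_{t_1}^{t} z^\Delta(\tau)\,\Delta\tau.$$
The constant function $\epsilon$ has pre-antiderivative $\epsilon\tau$, so $\int_{t_1}^t \epsilon\,\Delta\tau=\epsilon(t-t_1)$. Monotonicity of the Cauchy integral then yields
$$z(t)\geq z(t_1)+\epsilon(t-t_1).$$
The right-hand side tends to $\infty$ along any increasing sequence $t_i\to\infty$ in $\T$. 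In the sense of the footnote to Proposition~\ref{prop:convergence}, this is exactly $\lim_{t\to\infty}z(t)=\infty$.

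To stay within the tools the paper highlights, we can instead phrase the step as an application of Gronwall's inequality (Theorem~\ref{Thm:Gronwall}). Set $y:=-z$, $p\equiv 0\in\mathcal{R}^+$ and $f\equiv-\epsilon$, so that $y^\Delta\leq p\,y+f$ on $[t_1,\infty)_\T$. Using $e_0\equiv1$ from Remark~\ref{rem:exp_Properties}a), Gronwall gives $y(t)\leq y(t_1)-\epsilon(t-t_1)$, which is the same bound. The only care needed is that the hypothesis holds only for $t>T$, which is why we restart at $t_1$, or apply Gronwall on the time scale $[t_1,\infty)_\T$. If the point $T$ itself must be included and $T$ is right-scattered, we simply start at $t_1=\sigma(T)$.

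The second statement, for $w$, follows by applying the first to $z:=-w$, since $z^\Delta=-w^\Delta\geq\epsilon$.

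The main obstacle is justifying the integral inequality $\int_{t_1}^t g\,\Delta\tau\geq\int_{t_1}^t \epsilon\,\Delta\tau$ when $g\geq\epsilon$ and $g$ is only regulated or rd-continuous, rather than continuous. We plan to handle it by invoking the mean value / monotonicity result underlying Theorem~\ref{thm:monotonicity} (Bohner–Peterson Theorem~1.76). Applied to $g:=z-\epsilon\,\mathrm{id}$, whose delta derivative is nonnegative, that result shows $g$ is nondecreasing on $[t_1,\infty)_\T$, which gives the linear lower bound directly.
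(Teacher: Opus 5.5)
Your proposal is correct and follows the paper's argument: with $t_1>T$, you write $z(t)=z(t_1)+\int_{t_1}^t z^\Delta(\tau)\,\Delta\tau$ using the pre-antiderivative/Cauchy integral, bound the integrand below by $\epsilon$ to get $z(t)\geq z(t_1)+\epsilon(t-t_1)$, and get the $w$ case the same way (you via $z=-w$; the paper handles it directly, also citing Theorem~1.76 of Bohner--Peterson). Applying the monotonicity result to $z-\epsilon\,\mathrm{id}$ is a useful tightening, since it removes the need for $z^\Delta$ to be regulated, while the Gronwall variant only restates the same bound.
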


\begin{proof}  
Assume  $z^\Delta (t)\geq \epsilon>0$ for  all   $t>T$.  Since $z$ is the pre-antiderivative of $z^\Delta$,  by  Definition~\ref{def:Cauchy},  for $t_0, t\in \T$ and $T<t_0<t$,  $$z(t)=z(t_0)+\int_{t_0}^t z^\Delta(\tau) \, \Delta \tau\geq z(t_0)+\epsilon (t-t_0).$$
Thus, for $\epsilon>0$, $\lim_{t\to \infty, t\in \T} z(t)=\infty$.

Similarly, if $w^\Delta \leq -\epsilon$ for  $t >T$,   after using the identity in  \cite[Theorem~1.76]{Bohner1}, for $T<t_0<t$ and $t_0,t\in T$, 
$$w(t)=w(t_0)+\int_{t_0}^t w^\Delta(\tau) \, \Delta \tau \leq w(t_0)-\epsilon (t-t_0).$$
Again, since $\epsilon>0$, $\lim_{t\to \infty, t \in \T} w(t)=-\infty$. 
\end{proof}

\subsection{Solution of a single species logistic model \eqref{eq:logsingle}}\label{SolSingleT}

\begin{proposition}\label{prop:1D_model}
The single species model:
\begin{equation}\label{eq:single_species} x^\Delta = -(\ominus r) x^\sigma \left(1-\frac{x}{K} \right), \qquad t\in \mathbb{T}
\end{equation}
with parameters $r>0$ and  $K>0$  has unique solution
$$x(t)=  \frac{e_r(t,t_0)K x(t_0)}{K+x(t_0)(e_r(t,t_0)-1)}\geq 0,$$
with equality if and only if $x(t_0)=0$.
\end{proposition}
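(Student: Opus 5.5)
The plan is to linearize with the reciprocal substitution and then solve by variation of constants, as indicated in the remark following \eqref{eq:logsingle}. First, if $x(t_0)=0$, then by \eqref{eq:xy_sigma} with $y\equiv 0$ we have $x^\sigma=\frac{x(1+r\mu)}{1+r\mu x/K}$, so $x\equiv 0$ is a solution and agrees with the formula. For $x(t_0)>0$, the same recursion together with Theorem~\ref{thm:need} (applied to the trivial nullcline, as in Proposition~\ref{prop:nonneg}) keeps $x(t)>0$. This makes $u:=1/x$ well defined and delta-differentiable, with $u^\Delta=-x^\Delta/(x\,x^\sigma)$ by the quotient rule of \cite{Bohner1}.

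Substituting \eqref{eq:single_species} gives
\[
u^\Delta=\frac{(\ominus r)\left(1-\frac{x}{K}\right)}{x}=(\ominus r)\left(u-\frac1K\right).
\]
This is linear: $u^\Delta=(\ominus r)u-\frac{\ominus r}{K}$. Since $\ominus r$ is regressive, with $1+\mu(\ominus r)=\frac{1}{1+\mu r}>0$, Theorem~\ref{thm:VoC} applies. Put $w:=u-\frac1K$; then $w^\Delta=(\ominus r)w$, so
\[
w(t)=e_{\ominus r}(t,t_0)w(t_0)=\frac{w(t_0)}{e_r(t,t_0)}
\]
by Remark~\ref{rem:exp_Properties}c). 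This shortcut avoids the integral in \eqref{VoC}. Thus
\[
\frac1{x(t)}=\frac1K+\left(\frac1{x(t_0)}-\frac1K\right)\frac{1}{e_r(t,t_0)},
\]
and inverting gives $x(t)=\frac{e_r(t,t_0)Kx(t_0)}{K e_r(t,t_0)+x(t_0)(1-e_r(t,t_0))}$. Rearranging the denominator yields the claimed expression.

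Nonnegativity follows from Remark~\ref{rem:exp_Properties}f), which gives $e_r(t,t_0)>0$ because $r>0$ is positively regressive. The denominator equals $K(1-\theta)+x(t_0)\theta$ with $\theta=e_r(t,t_0)$. For $t\ge t_0$ we have $\theta\ge1$, and the denominator is then $K+x(t_0)(\theta-1)>0$. The numerator is zero exactly when $x(t_0)=0$.

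Uniqueness follows because the transformed linear problem has a unique solution by Theorem~\ref{thm:VoC}, and the map $u\mapsto 1/u$ is a bijection on positive values. Alternatively, one can verify directly that the formula satisfies $x^\sigma=\frac{x(1+r\mu)}{1+r\mu x/K}$ by using $e_r^\sigma=(1+\mu r)e_r$, and then appeal to the uniqueness theorem cited in the proof of Proposition~\ref{prop:nonneg}.

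The main obstacle I expect is justifying that $u$ is differentiable with the quotient-rule formula on all of $[t_0,\infty)_\T$. This requires $x$ and $x^\sigma$ to be nonzero, and it is why positivity has to be established before the substitution rather than deduced from the final formula.
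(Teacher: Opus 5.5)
Your argument is essentially the paper's: substitute $u=1/x$, obtain the linear equation $u^\Delta=(\ominus r)u-(\ominus r)/K$, and solve it using $e_{\ominus r}(t,t_0)=1/e_r(t,t_0)$. Homogenizing with $w=u-\frac1K$ is only a shortcut for the paper's variation-of-constants step with Theorem~\ref{thm:int_exp}. Checking that $x$ and $x^\sigma$ are nonzero before applying the quotient rule is more care than the paper takes; it simply writes ``assume that $x(t_0)>0$''.

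The inversion step, however, is written incorrectly. With $\theta=e_r(t,t_0)$, multiplying $\frac1{x(t)}=\frac1K+\bigl(\frac1{x(t_0)}-\frac1K\bigr)\frac1\theta$ by $Kx(t_0)\theta$ gives $\frac{Kx(t_0)\theta}{x(t)}=K+x(t_0)(\theta-1)$ directly. Your intermediate denominator $K\theta+x(t_0)(1-\theta)$ differs from this by $(\theta-1)(K-2x(t_0))$, so ``rearranging the denominator'' cannot produce the claimed formula. Your later assertion that the denominator equals $K(1-\theta)+x(t_0)\theta$ is also false; that expression is negative when $x(t_0)<K$ and $\theta$ is large. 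Replace both by $K+x(t_0)(\theta-1)$, which is positive because $\theta\ge1$ for $t\ge t_0$.

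Two smaller remarks. First, Theorem~\ref{thm:need} only gives positivity on a short interval after each $T$, so ``keeps $x(t)>0$'' really needs a continuation or induction argument. You can avoid this entirely: $u=\bigl(1-\frac1\theta\bigr)\frac1K+\frac1\theta\cdot\frac1{x(t_0)}$ is a convex combination of $\frac1K$ and $\frac1{x(t_0)}$, hence positive. Reversing your computation, $1/u$ is then a positive solution, and uniqueness of the nonlinear initial value problem identifies it with $x$. So positivity need not be established before the substitution. Second, your alternative check of only the relation $x^\sigma=\frac{x(1+r\mu)}{1+r\mu x/K}$ is vacuous at right-dense points. There one must verify $X^\Delta=rX(1-X/K)$ directly, e.g.\ by the quotient rule.
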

\begin{proof}
Since,  $\overline{x}=0$ is an equilibrium,  if $x(t_0)=0$, then $x(t)=0$ for all $t\geq t_0, \, t\in \T$. 
Let $u=\frac{1}{x}$ and assume that $x(t_0)>0$.
Then,
\begin{equation}\label{eq:1D_unique}
u^{\Delta} = (\ominus r) u - (\ominus r)\frac{1}{K}. 
\end{equation}
Using the variation of constants formula given in Theorem~\eqref{thm:VoC} with $p(s)=(\ominus r)(s)$, Theorem~\ref{thm:int_exp},  and Remark~\ref{rem:exp_Properties}, the unique solution of \eqref{eq:1D_unique} is given by:
\begin{align*} u(t)&=e_{\ominus r}(t,t_0)u(t_0) - 
\frac{1}{K}\int_{t_0}^{t} e_{\ominus r}(t,\sigma(s))((\ominus r)(s)) \, \Delta s \\
 & = e_{\ominus r}(t,t_0)u(t_0) -  \frac{1}{K}(e_{\ominus r}(t,t_0)- e_{\ominus r}(t,t))\\
 &= \frac{1}{e_r(t,t_0)}u(t_0)+\frac{1}{K}\left(1-\frac{1}{e_r(t,t_0)}\right).
\end{align*}
Replacing $u(t)$ by $\frac{1}{x(t)}$ and simplifying, the result follows noting that, $e_r(t,t_0)>1$, because $r>0$. 
\end{proof}

\subsection{Relation of logistic expressions}\label{Ap:Marcia}

In \cite{AkinBohner}, see also \cite[p.~18]{Bohner2}, the authors introduce two different logistic models, 
\begin{align}
y^\Delta = \left( \ominus (p+fy)\right)y \label{eq:LogB1}\\ 
x^\Delta = \left( p\ominus (fx)\right)x, \label{eq:LogB2}
\end{align}
where both can be solved explicitly as described in \cite{AkinBohner}.

First, notice that \eqref{eq:logsingle}, using that $z^\sigma = z+\mu z^\Delta$ for delta-differentiable functions $z$, 
$$z^\Delta = -(\ominus r)z^\sigma \left(1-\frac{z}{K}\right)=-(\ominus r)(z+\mu z^\Delta)\left(1-\frac{z}{K}\right)$$
so that, after rearranging, \eqref{eq:logsingle} is equivalent to 
$$z^\Delta \left(1+\mu (\ominus r)\left(1-\frac{z}{K}\right) \right)= -(\ominus r)z \left(1-\frac{z}{K}\right),$$
i.e., 
\begin{align*}
z^\Delta &= \frac{-(\ominus r)}{(1+\mu (\ominus r)\left(1-\frac{z}{K}\right)}z \left(1-\frac{z}{K}\right)= \frac{-(\ominus r)\left(1-\frac{z}{K}\right)}{(1+\mu (\ominus r)\left(1-\frac{z}{K}\right)} z
=\left\{\ominus \left\{ (\ominus r)\left(1-\frac{z}{K}\right)\right\}\right\}z,
\end{align*}
where we used that  $\ominus q=\frac{-q}{1+\mu q}$ for $q=(\ominus r)\left(1-\frac{z}{K}\right)$.
Since 
$$  (\ominus r)\left(1-\frac{z}{K}\right)= (\ominus r)-(\ominus r)\frac{z}{K}=p+fz,$$
for $p=\ominus r$ and $f=\frac{-(\ominus r)}{K}$. 

Note that \eqref{eq:logsingle} is also equivalent to the proposed logistic model in \cite{Marcia} since 
\begin{align*}
z^\Delta \stackrel{\eqref{eq:logsingle}}{=} -(\ominus r) z^\sigma \left(1-\frac{z}{K}\right)=-(\ominus r)z^\sigma + (\ominus r)z^\sigma \frac{z}{K}
= -(\ominus r)(z+\mu z^\Delta) + (\ominus r)z^\sigma \frac{z}{K},   
\end{align*}
so that after rearranging terms, we get that \eqref{eq:logsingle} is equivalent to 
\begin{equation*}
z^\Delta (1+\mu (\ominus r))= -(\ominus r)z+ (\ominus r)z^\sigma \frac{z}{K},  
\end{equation*}
i.e., 
\begin{align*}
    z^\Delta = \frac{-(\ominus r)}{(1+\mu (\ominus r))}z+ \frac{(\ominus r)}{(1+\mu (\ominus r))}z^\sigma \frac{z}{K}=rz-rz^\sigma \frac{z}{K}=rz\left(1-\frac{z^\sigma}{K}\right),
\end{align*}
where we used that 
$$ \frac{(\ominus r)}{(1+\mu (\ominus r))} = \ominus (-(\ominus r))=-r.$$
Thus, \eqref{eq:logsingle} and the logistic model proposed in \cite{Marcia} are identical and both are special cases of \eqref{eq:LogB1}. 

Finally, note that the Beverton--Holt model introduced in \cite{BohnerWarth} of the form 
\begin{equation}\label{BHWarth}
z^\Delta = \alpha z^\sigma \left(1-\frac{z}{K}\right)
\end{equation}
is identical to \eqref{eq:logsingle} for $\alpha=-(\ominus r)$.

\section{Proof of Theorem~\ref{thm:bound}}\label{A:Proofbnd}

\begin{proof}
To show that solutions with nonnegative initial conditions are bounded,  note that since  $- (\ominus r)=\frac{r}{1+\mu r}>0$, \,  $x^{\sigma}\geq 0$ by \eqref{eq:xy_sigma}, and $y(t)\geq 0$ by Proposition~\ref{prop:nonneg}. Then 
$$x^\Delta = -(\ominus r)x^\sigma \left(1-\frac{x}{K}-\alpha y\right)\leq -(\ominus r)x^\sigma \left(1-\frac{x}{K}\right).$$

Let $u=\frac{1}{x}$. Then, $u^{\Delta}= -\frac{x^{\Delta}}{x x^{\sigma}}$ and since $-(\ominus r)>0$, 
the above inequality implies that
$$u^\Delta \geq  (\ominus r)u -(\ominus r)\frac{1}{K}.$$
Then, for $v=-u$, we have  
$$v^\Delta \leq  (\ominus r)v +(\ominus r)\frac{1}{K}.$$
Since   $1+ \mu  (\ominus r)=1+\mu \frac{-r}{1+\mu r}=\frac{1}{1+\mu r} >0$, $(\ominus r) \in \mathcal{R}^+$ and $(\ominus r)\frac{1}{K}\in \mathcal{C}_{rd}$,   by  Theorem~\ref{Thm:Gronwall}, 
\begin{align*}
v(t)&\leq e_{(\ominus r)}(t,t_0)v(t_0)+\int_{t_0}^t e_{(\ominus r)}(t,\sigma(s)) (\ominus r)\frac{1}{K}\Delta s\\
&\stackrel{\eqref{eq:integrale}}{=}e_{(\ominus r)}(t,t_0)v(t_0)+\frac{1}{K}\left(e_{(\ominus r)}(t,t_0)-1\right).
\end{align*}
Thus, 
\begin{equation}\label{usol}
u(t)\geq \frac{1}{K}(1-e_{(\ominus r)}(t,t_0))+e_{(\ominus r)}(t,t_0)u(t_0).
\end{equation}
Since $(\ominus r)\in \mathcal{R}^+$, $e_{\ominus r}(t,t_0)>0$, (see Remark~\ref{rem:exp_Properties} f)). Also, $e_{(\ominus r)}(t,t_0)=e_{-p}(t,t_0)$ for $p:=-(\ominus r)=\frac{r}{1+\mu(t)r}>0$. Since $p$ is nonnegative and $-p\in \mathcal{R}^+$, we apply Lemma~\ref{Lem2in1} to conclude that for $t\in (t_0,\infty)_\T$,  
$$e_{\ominus r}(t,t_0)=e_{-p}(t,t_0)\leq e^{-\int_{t_0}^t p(s)\Delta s
}=e^{\int_{t_0}^t (\ominus r)\Delta s
}=e^{-\int_{t_0}^t \frac{r}{1+\mu(s)r}\Delta s
}<1.$$

Thus, $1-e_{(\ominus r)}(t,t_0)>0$ and we obtain,  after substituting back for $x(t)$, again  using   $u(t)=\frac{1}{x(t)}$ in \eqref{usol}, 
$$x(t)\leq \frac{Kx(t_0)}{x(t_0)(1-e_{(\ominus r)}(t,t_0))+Ke_{(\ominus r)}(t,t_0)}.$$
Since $e_{(\ominus r)}(t,t_0)\in [0,1]$, the solution is bounded.

The argument to show that $y(t)$ is bounded is similar. 
\end{proof}


\begin{thebibliography}{99}

\bibitem{Agarwal_2002}
R. Agarwal and  M. Bohner and D. O'Reagan and  A. Peterson. {\em Dynamic Equations on time scales: a survey}. J. Comp. Appl. Math., vol.~141, pp.~1--26, 2002.

\bibitem{Edelstein1988}
Edelstein-Keshet, L.. {\em Mathematical Models in Biology}. Society for Industrial and Applied Mathematics (SIAM, 3600 Market Street, Floor 6, Philadelphia, PA 19104), 1988.

\bibitem{Hutchinson1978}
Hutchinson, G. E.. {\em An Introduction to Population Ecology}. Yale University Press, 1978.

\bibitem{mathematica}
Wolfram Research Inc. {\em Mathematica 12.2}, 2020.

\bibitem{brauer2011}
Brauer, F. and Castillo-Chavez, C.. {\em Mathematical Models in Population Biology and Epidemiology}. Springer New York, 2011.

\bibitem{Galor2007}
Galor, O.. {\em Discrete Dynamical Systems}.. Springer-Verlag Berlin Heidelberg, 2007.

\bibitem{Braun1979}
Braun, M. {\em Differential Equations and Their Applications}. Springer-Verlag New York, 1979.

\bibitem{Pielou1969}
Pielou, E. C. {\em An Introduction to Mathematical Ecology}. Wiley-Interscience, New York, 1969.

\bibitem{GailMary2011}
Ballyk, M. M. and Wolkowicz, G. S. K. {\em Classical and resource-based competition: a unifying graphical approach}. J. Math. Biol., vol.~62, pp.~81--109, 2011.

\bibitem{Gause1934}
Gause, G. F.. {\em The Struggle for Existence}. Hafner Publishing, New York, 1934.

\bibitem{StWoBo1}
Streipert, S. H. and Wolkowicz, G. S. K. and Bohner, M. {\em An alternative discrete predator-prey model}. Bulletin of Mathematical Biology, vol.~84, no.~67, pp.~1--34, 2022.

\bibitem{StWo2}
Streipert, S. H. and Wolkowicz, G. S. K. {\em A method to derive discrete population models}. Springer Proceedings in Mathematics \& Statistics, vol.~Advances in Discrete Dynamical Systems, Difference Equations, and Applications, 2022.

\bibitem{MATLAB:2020b}
MATLAB. {\em version R2020b}. The MathWorks Inc., 2020.

\bibitem{Roeger2013}
Roeger, Lih-Ing Wu and Lahodny, Jr., Glenn. {\em Dynamically consistent discrete Lotka--Volterra Competition Systems}. J. Differ. Equ. Appl., vol.~19, no.~2, pp.~191--200, 2013.

\bibitem{Roeger2009}
Roeger, Lih-Ing W. and Gelca, Razvan. {\em Dynamically consistent discrete-time Lotka--Volterra competition models}. Discrete Contin. Dyn. Syst., no.~Dynamical systems, differential equations and applications.               7th AIMS Conference, suppl., pp.~650--658, 2009.

\bibitem{Smith1998}
Hal L. Smith. {\em Planar competitive and cooperative difference equations}. Journal of Difference Equations and Applications, vol.~3, no.~5--6, pp.~335--357, 1998.

\bibitem{Liu2001}
Liu, P. and Elaydi, S. N.. {\em Discrete competitive and cooperative models of Lotka–-Volterra type}. Journal of Computational Analysis and Applications, vol.~3, no.~1, pp.~53--73, 2001.

\bibitem{Cushing2004}
Cushing, J. M. and Levarge, S. and Chitnis, N. and Henson, S. M.. {\em Some discrete competitive models and the competitive exclusion principle}. Journal of Difference Equations and Applications, vol.~10, no.~13--15, pp.~1139--1151, 2004.

\bibitem{FRANKE1991111}
John E. Franke and Abdul-Aziz Yakubu. {\em Global attractors in competitive systems}. Nonlinear Analysis: Theory, Methods \& Applications, vol.~16, no.~2, pp.~111-129, 1991.

\bibitem{Baigent2016}
Baigent, Stephen. {\em Convexity of the carrying simplex for discrete-time planar competitive Kolmogorov systems}. Journal of Difference Equations and Applications, vol.~22, no.~5, pp.~609-622. Taylor \& Francis, 2016.

\bibitem{Mickens1}
Mickens, Ronald E.. {\em Exact solutions to a finite-difference model of a nonlinear reaction-advection equation: Implications for numerical analysis}. Numerical Methods for Partial Differential Equations, vol.~5, no.~4, pp.~313-325, 1989.

\bibitem{Mickens2}
Mickens, Ronald E.. {\em Genesis of elementary numerical instabilities in               finite-difference models of ordinary differential equations}. Proceedings of Dynamic Systems and Applications, no.~1308304, pp.~251--257. Dynamic, Atlanta, GA, 1994.

\bibitem{Mickens3}
Mickens, Ronald E. and Smith, Arthur. {\em Finite-difference models of ordinary differential equations:               influence of denominator functions}. J. Franklin Inst., vol.~327, no.~1, pp.~143--149, 1990.

\bibitem{Mickens4}
Mickens, Ronald E.. {\em Nonstandard finite difference models of differential               equations}., no.~1275372, pp.~xii+249. World Scientific Publishing Co., Inc., River Edge, NJ, 1994.

\bibitem{Hadeler1994}
Gouz\'{e} J.-L. and Hadeler, K. P.. {\em Monotone flows and order intervals}. Nonlinear World, vol.~1, no.~1, pp.~23--34, 1994.

\bibitem{Leslie1958}
Leslie, P. H.. {\em A stochastic model for studying the properties of certain               biological systems by numerical methods}. Biometrika, vol.~45, no.~93436, pp.~16--31, 1958.

\bibitem{Allen2007}
Allen, L. J. S.. {\em An Introduction to Mathematical Biology}. Pearson/Prentice Hall, 2007.

\bibitem{CabralBalreira2014}
Cabral Balreira and Elaydi, Saber and Lu\'{i}s, Rafael. {\em Local stability implies global stability for the planar Ricker competition model}. Discrete and Continuous Dynamical Systems - Series B, vol.~19, no.~2, pp.~323--351, 2014.

\bibitem{Luis2011}
Lu\'{i}s, Rafael and Elaydi, Saber. {\em Open problems in some competition models}. Journal of Difference Equations and Applications, vol.~17, pp.~1873-1877, 2011.

\bibitem{Ladas2001}
Kulenovic, M. R. S. and Ladas, G.. {\em Dynamics of Second Order Rational Difference Equations: With Open Problems and Conjectures}. CRC Press, 2001.

\bibitem{Lotka1910}
Lotka, A.J. {\em Contribution to the Theory of Periodic Reaction}. J. Phys. Chem., vol.~14, no.~3, pp.~271–-274, 1910.

\bibitem{Lotka}
Lotka, Alfred J. {\em Analytical Note on Certain Rhythmic Relations in Organic Systems}. Proceedings of the National Academy of Sciences, vol.~6, no.~7, pp.~410--415. National Academy of Sciences, 1920.

\bibitem{kocic1993}
Kocic, V. L. and Ladas, G.. {\em Global Behavior of Nonlinear Difference Equations of Higher Order with Applications}. Springer Netherlands, 1993.

\bibitem{doi:10.1080/17513758.2011.581764}
Rafael   Lu\'{i}s and  Saber   Elaydi  and  Henrique   Oliveira. {\em Stability of a Ricker-type competition model and the competitive exclusion principle}. Journal of Biological Dynamics, vol.~5, no.~6, pp.~636-660. Taylor \& Francis, 2011.

\bibitem{Gumus2022}
Ozlem Ak G\"{u}m\"{u}s  and  Qianqian Cui and George Maria Selvam and Abraham Vianny  {\em Global stability and bifurcation analysis of a discrete time SIR epidemic model}. Miskolc. Math., vol.~23, no.~1, pp.~193--210, 2022.

\bibitem{Din2016}
Qamar Din. {\em Qualitative behavior of a discrete sir epidemic model}. Int. J. Biomath., vol.~6, no.~9, pp.~1--15, 2016.

\bibitem{Brauer2010}
Fred Brauer and Zhilan Feng and Carlos Castillo-Chávez. {\em Discrete epidemic models}. Mathematical Biosciences and Engineering, vol.~7, no.~1, pp.~1--15, 2010.

\bibitem{Kulenovic2021}
Kulenovi\'{c}, M. R. S. and Nurkanovi\'{c}, M. and Yakubu, Abdul-Aziz. {\em Asymptotic behavior of a discrete-time density-dependent SI epidemic model with constant recruitment}. J. Appl. Math. Comput., vol.~67, no.~1-2, pp.~733--753, 2021.

\bibitem{Voit2021}
Eberhard O. Voit and Daniel V. Oliven\c{c}a. {\em Discrete Biochemical Systems Theory}. Front. Mol. Biosci., vol.~9, no.~874669, 2022.

\bibitem{Thierry2010}
Huillet, Thierry E. {\em On discrete-time multiallelic evolutionary dynamics driven by               selection}. J. Probab. Stat., no.~2671781, pp.~Art. ID 580762, 27, 2010.

\bibitem{MR1910672}
Yuan, Zhaohui and Huang, Lihong and Chen, Yuming. {\em Convergence and periodicity of solutions for a discrete-time               network model of two neurons}. Math. Comput. Modelling, vol.~35, no.~9-10, pp.~941--950, 2002.

\bibitem{StaceySmith}
Smith?, Robert. {\em Modelling disease ecology with mathematics}., vol.~5, no.~3720721, pp.~295. American Institute of Mathematical Sciences (AIMS),               Springfield, MO, 2017.

\bibitem{Gopalsamy}
K. Gopalsamy, I. Leung. {\em Delay induced periodicity in a neural netlet of excitation and inhibition}. Physica D: Nonlinear Phenomena, vol.~89, no.~3-4, pp.~395--426, 1996.

\bibitem{FARIA2000129}
Teresa Faria. {\em On a Planar System Modelling a Neuron Network with Memory}. Journal of Differential Equations, vol.~168, no.~1, pp.~129--149, 2000.

\bibitem{smith_JDEA_1998}
H. L. Smith. {\em Planar competitive and cooperative difference equations}. Journal of Difference Equations and Applications, vol.~3, pp.~335--357, 1998.

\bibitem{StWo}
Streipert, S. and Wolkowicz, G. S. K.. {\em An augmented phase plane approach for discrete planar maps: Introducing next-iterate  operators}. Mathematical Biosciences, vol.~355, pp.~108924, 2023.

\bibitem{Hilger1988}
Hilger, Stefan. {\em Ein Maßkettenkalk\"{u}l }., 1988.

\bibitem{BohnerWarth}
Bohner, Martin and Warth, Howard. {\em The Beverton--Holt dynamic equation}. Applicable Analysis, vol.~86, no.~8, pp.~1007--1015, 2007.

\bibitem{Bohner1}
Bohner, M. and Peterson, A. {\em Dynamic Equations on Time Scales: An Introduction with Applications}., no.~1843232 (2002c:34002), pp.~x+358. Birkh\, 2001.

\bibitem{Bohner2}
Bohner, M. and Peterson, A.C. {\em Advances in Dynamic Equations on Time Scales}. Birkh\"{a}user, 2011.

\bibitem{Olszewski}
S. Olszewski. {\em Time Scale and its Application in Perturbation Theory}. Zeitschrift für Naturforschung A, vol.~46, no.~4, pp.~313--320, 1991.

\bibitem{KloedenZmorzynska2006}
Peter E. Kloeden and Alexandra Zmorzynska. {\em Lyapunov functions for linear nonautonomous dynamical equations on time scales}. Advances in difference equations, vol.~2006, no.~Article ID 69106, pp.~1 -- 10, 2006.

\bibitem{SIEGMUND2002255}
Stefan Siegmund. {\em A spectral notion for dynamic equations on time scales}. Journal of Computational and Applied Mathematics, vol.~141, no.~1, pp.~255--265, 2002.

\bibitem{martynyuk2016}
Martynyuk, A.A.. {\em Stability Theory for Dynamic Equations on Time Scales}. Springer International Publishing, 2016.

\bibitem{DAVIS20071291}
John M. Davis and Ian A. Gravagne and Billy J. Jackson and Robert J. Marks and Alice A. Ramos. {\em The Laplace transform on time scales revisited}. Journal of Mathematical Analysis and Applications, vol.~332, no.~2, pp.~1291--1307, 2007.

\bibitem{MR2145447}
Bohner, Martin. {\em Some oscillation criteria for first order delay dynamic               equations}. Far East J. Appl. Math., vol.~18, no.~3, pp.~289--304, 2005.

\bibitem{StTS}
Streipert, S.. {\em Introduction to Time Scales}. Nonlinear Systems - Recent Developments and Advances



\bibitem{BookTSSt}
Tucker, Ledyard R. and others. {\em Nonlinear Systems - Recent Developments and Advances}. Contributions to Mathematical Psychology. IntechOpen, 2023.

\bibitem{Lotka1925}
Lotka, A.J.. {\em Elements of Physical Biology}.. Williams \& Wilkins, 1925.

\bibitem{Volterra1926a}
Volterra, V.. {\em Fluctuations in the Abundance of a Species considered Mathematically}. Nature, vol.~118, pp.~558--560, 1926.

\bibitem{Volterra1926b}
Volterra, Vito. {\em Variazioni e fluttuazioni del numero d'individui in specie animali	conviventi}. Mem. Acad. Lincei Roma, vol.~2, pp.~31--113, 1926.

\bibitem{Kingsland2015}
Sharon Kingsland. {\em Alfred J. Lotka and the origins of theoretical population ecology}. Proceedings of the National Academy of Sciences, vol.~112, no.~31, pp.~9493--9495, 2015.

\bibitem{HernandezGarcia2009}
Hern\'{a}ndez-Garc\'{i}a, Emilio and L\'{p}pez, Cristobal and  Pigolotti, Simone  and Andersen, Ken H. {\em Species Competition: Coexistence, Exclusion and Clustering}. Philosophical Transactions: Mathematical, Physical and Engineering Sciences, vol.~367, no.~1901, pp.~3183--3195, 2009.

\bibitem{Gavina2018}
Gavina, M. K. A. and  Tahara, Takeru and  Tainaka, Kei-ichi and  Ito, Hiromu and  Morita, Satoru and  Ichinose, Genki and Okabe, Takuya  and  Togashi, Tatsuya and   Nagatani, Takashi and  Yoshimura, Jin. {\em Multi-species coexistence in Lotka--Volterra competitive systems with crowding effects}. Sci. Rep., vol.~8, pp.~1198, 2018.

\bibitem{Mooij2024}
Mooij, M. N. and Baudena, M. and von der Heydt, A. S. and Kryven, I.. {\em Stable coexistence in indefinitely large systems of competing species}. Proceedings of the Royal Society A: Mathematical, Physical and Engineering Sciences, vol.~480, no.~2299, pp.~20240290, 2024.

\bibitem{Marcia}
M\'{a}rcia Lemos-Silva and Delfim F.M. Torres. {\em Logistic equation on time scales}. Examples and Counterexamples, vol.~8, pp.~100197, 2025.

\bibitem{AkinBohner}
Elvan Akin-Bohner and Martin Bohner. {\em Miscellaneous Dynamic Equations}. Methods Appl. Anal., vol.~10, no.~1, pp.~11--30, 2003.

\end{thebibliography}
\end{document}